\newtheorem{theorem}{Theorem}[section]
\newtheorem{proposition}[theorem]{Proposition}
\newtheorem{lemma}[theorem]{Lemma}
\newtheorem{claim}[theorem]{Claim}
\newtheorem*{claim*}{Claim}
\newtheorem{corollary}[theorem]{Corollary}
\newtheorem{Main Conjecture}[theorem]{Main Conjecture}
\theoremstyle{remark}
\newtheorem{definition}[theorem]{Definition}
\newtheorem{example}[theorem]{Example}
\theoremstyle{plain}
\begin{document}
\pagestyle{plain}

\mbox{}
\title{Prism Tableaux for Alternating Sign Matrix Varieties}
\author{Anna Weigandt}
\address{Dept.~of Mathematics, U.~Illinois at
Urbana-Champaign, Urbana, IL 61801, USA}
\email{weigndt2@uiuc.edu}

\date{\today}

\maketitle

\begin{abstract}
A prism tableau is a set of reverse semistandard tableaux,  each  positioned within an ambient grid.  Prism tableaux were introduced to provide a formula for the Schubert polynomials of A.~Lascoux and M.P.~Sch\"utzenberger.  This formula directly generalizes the well known expression for Schur polynomials as a sum over semistandard tableaux.    Alternating sign matrix varieties generalize the matrix Schubert varieties of  W.~Fulton.  We use prism tableaux to give a  formula for the multidegree of an alternating sign matrix variety.  
\end{abstract}

\tableofcontents

\section{Introduction}
 An {\bf alternating sign matrix} (ASM) is a square matrix with entries in $\{-1,0,1\}$ so that 
\begin{enumerate}[label=(A\arabic*),ref=(A\arabic*)]
\item \label{item:A1} the nonzero entries in each row and column alternate in sign and 
\item \label{item:A2} each row and column sums to 1.
\end{enumerate}
Let ${\sf ASM}(n)$ be the set of all $n\times n$ ASMs.  
The enumeration of ASMs has drawn much interest, the 
sequence for $n\geq 1$ being
\[ 1, 2, 7, 42, 429, 7436, 218348, 10850216, 911835460,\ldots.\]
 There is a closed form expression for this sequence; the celebrated \emph{alternating sign matrix conjecture} of 
W.~H.~Mills--D.~P.~Robbins--H.~Rumsey \cite{mills1983alternating} asserts that
\[|{\sf ASM}(n)|=\prod_{j=0}^{n-1}\frac{(3j+1)!}{(n+j)!}.\]
The original proof  was given by D.~Zeilberger \cite{zeilberger1996proof}. A second proof was given by G.~Kuperberg \cite{kuperberg1996another} using  the six-vertex model of statistical mechanics. See \emph{Proofs and Confirmations: The Story of the Alternating-Sign Matrix Conjecture}, by D.~Bressoud, for the link between ASMs and hypergeometric series, plane partitions, and lattice paths \cite{bressoud1999proofs}.

  Each $A=(a_{ij})_{i,j=1}^n\in{\sf ASM}(n)$ has an associated {\bf corner sum function}
\begin{equation}
\label{eqn:cornersum}
r_A(i,j)=\sum_{k=1}^i\sum_{\ell =1}^ja_{k\ell}.
\end{equation}  Corner sum functions define a lattice structure on ${\sf ASM}(n)$; say
\begin{equation}
\label{eqn:asmposetdef}
A\leq B \text{ if and only if } r_A(i,j)\geq r_B(i,j) \text{ for all } 1\leq i,j\leq n.
\end{equation}
Restricted to permutation matrices, (\ref{eqn:asmposetdef}) is the {\bf Bruhat order} on the symmetric group $\mathcal S_n$.  A.~Lascoux and M.P.~Sch\"utzenberger showed that ${\sf ASM}(n)$ is the smallest lattice which contains  $\mathcal S_n$ as an order embedding  \cite{lascoux1996treillis}.

 A {\bf partition} is a weakly decreasing sequence of nonnegative integers \[\lambda=(\lambda_1, \lambda_2, \lambda_3, \ldots , \lambda_h).\]  The {\bf length} of $\lambda$ is  $\ell(\lambda):=|\{i:\lambda_i\neq 0\}|$. 
 Fix tuples  of partitions and positive integers
\begin{equation}
\label{eqn:tupledef}
 \boldsymbol \lambda= (\lambda^{(1)},\ldots, \lambda^{(k)}) \text{ and } \mathbf d=(d_1,\ldots,d_k) \text{ so that } d_i\geq \ell(\lambda^{(i)}) \text{ for all } i.
\end{equation}  
 We associate to each $(\boldsymbol \lambda,\mathbf d)$ an ASM, denoted
 $A_{\boldsymbol \lambda,\mathbf d}$, which is the least upper bound of a list of \emph{Grassmannian} permutations.
Conversely,  for any ASM, there exists some $(\boldsymbol \lambda,\mathbf d)$ so that $A=A_{\boldsymbol \lambda,\mathbf d}$.

Prism tableaux were first defined in \cite{weigandt2015prism}.  We give a more general definition here.
A \emph{prism tableau} for  $(\boldsymbol \lambda,\mathbf d)$ is a $k$-tuple of \emph{reverse semistandard tableaux}, with shapes and labels determined by the pair $(\boldsymbol \lambda,\mathbf d)$.   We write ${\tt Prism}(\boldsymbol \lambda,\mathbf d)$ for the set of \emph{minimal prism tableaux} for $(\boldsymbol \lambda,\mathbf d)$ which have no \emph{unstable triples}.   These terms are defined in Section~\ref{subsect:prism}.  Each prism tableau has an associated weight monomial ${\tt wt}(\mathcal T)$. 
Let
\begin{equation}
\label{eqn:prismgenseries}
\mathfrak A_{\boldsymbol \lambda,\mathbf d}=\sum_{\mathcal T\in {\tt Prism}(\boldsymbol \lambda,\mathbf d)}{\tt wt}(\mathcal T).
\end{equation}
Call $\mathfrak A_{\boldsymbol \lambda,\mathbf d}$ an {\bf ASM polynomial}.

If $\boldsymbol \lambda=(\lambda)$ and $\mathbf d=(d)$, the polynomial  $\mathfrak A_{\boldsymbol \lambda,\mathbf d}$ is the {\bf Schur polynomial} $s_{\lambda}(x_1,\ldots,x_d)$.  This follows immediately from the usual definition of $s_{\lambda}$ as a weighed sum over \emph{semistandard tableaux}.  The \emph{Schubert polynomials} $\{\mathfrak S_w:w\in \mathcal S_\infty\}$ of  A.~Lascoux and M.P.~Sch\"utzenberger \cite{Lascoux.Schutzenberger} generalize Schur polynomials.  The purpose of  \cite{weigandt2015prism} was to provide a prism formula for Schubert polynomials.  We prove the following generalization.
\begin{theorem}
\label{theorem:schubertsum}
$\displaystyle \mathfrak A_{\boldsymbol \lambda,\mathbf d}=\sum_{w\in{\tt MinPerm}(A_{\boldsymbol \lambda,\mathbf d})}\mathfrak S_w$.
\end{theorem}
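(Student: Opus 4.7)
The plan is to reduce Theorem~\ref{theorem:schubertsum} to the single-permutation prism formula of \cite{weigandt2015prism}, which states $\mathfrak S_w = \sum_{\mathcal T \in {\tt Prism}(w)} {\tt wt}(\mathcal T)$, where ${\tt Prism}(w)$ is the set of minimal prism tableaux without unstable triples for the data $(\boldsymbol\lambda_w,\mathbf d_w)$ canonically attached to $w$ (via its code or a Grassmannian factorization). Given that formula, the target identity would follow from a weight-preserving bijection
\[
\Phi \colon {\tt Prism}(\boldsymbol\lambda,\mathbf d) \;\longrightarrow\; \bigsqcup_{w \in {\tt MinPerm}(A_{\boldsymbol\lambda,\mathbf d})} {\tt Prism}(w),
\]
since then $\mathfrak A_{\boldsymbol\lambda,\mathbf d} = \sum_w \sum_{\mathcal T \in {\tt Prism}(w)} {\tt wt}(\mathcal T) = \sum_{w} \mathfrak S_w$ over $w \in {\tt MinPerm}(A_{\boldsymbol\lambda,\mathbf d})$. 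As a sanity check, when $k=1$ and $\boldsymbol\lambda = (\lambda)$, $\mathbf d = (d)$, the ASM $A_{\boldsymbol\lambda,\mathbf d}$ is the Grassmannian permutation $w_{\lambda,d}$, ${\tt MinPerm}$ is the singleton $\{w_{\lambda,d}\}$, and the identity specializes to the classical $s_\lambda(x_1,\ldots,x_d) = \mathfrak S_{w_{\lambda,d}}$.

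To construct $\Phi$, I would attach to each $\mathcal T \in {\tt Prism}(\boldsymbol\lambda,\mathbf d)$ a permutation $w(\mathcal T)$ read off from the positions and labels of the entries in the $k$ layers of $\mathcal T$, for example via the Rothe diagram whose cells are forced by the supports of the reverse tableaux. Two properties must be verified: first, $w(\mathcal T) \geq A_{\boldsymbol\lambda,\mathbf d}$ in the ASM order, which should be a direct corner-sum comparison via (\ref{eqn:asmposetdef}) since each layer $\lambda^{(i)}$ encodes the rank conditions of the Grassmannian $w^{(i)}$ whose join is $A_{\boldsymbol\lambda,\mathbf d}$; and second, $w(\mathcal T)$ is Bruhat-minimal among permutations above $A_{\boldsymbol\lambda,\mathbf d}$, which is precisely where the minimality axiom and the ``no unstable triples'' condition are needed. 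The inverse map lifts a tableau $\mathcal T' \in {\tt Prism}(w)$ back into the ambient $(\boldsymbol\lambda,\mathbf d)$-grid by slotting its shapes into the corresponding $\lambda^{(i)}$ layers; since ${\tt wt}$ is defined entrywise from the labels in the ambient grid, the bijection preserves weight essentially by construction.

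The main obstacle will be showing that the local tableau-theoretic conditions (minimality and no unstable triples) match exactly with the global Bruhat-minimality condition defining ${\tt MinPerm}(A_{\boldsymbol\lambda,\mathbf d})$. The ``no unstable triples'' condition is the mechanism that rules out prism tableaux whose associated permutation strictly dominates some other permutation above $A_{\boldsymbol\lambda,\mathbf d}$, and verifying this exact match in both directions will require case analysis driven by the corner-sum characterization of joins in the ASM lattice. A secondary technical point is that the presentation $(\boldsymbol\lambda_w,\mathbf d_w)$ used in \cite{weigandt2015prism} to define ${\tt Prism}(w)$ may differ from the induced presentation obtained by pulling back $(\boldsymbol\lambda,\mathbf d)$ to the fiber over $w$; a compatibility lemma showing that different prism presentations of the same permutation yield the same set of minimal, stable tableaux (up to weight) will be needed to close the argument.
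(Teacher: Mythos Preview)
Your plan has the right overall shape---a weight-preserving bijection is indeed the mechanism---but it is missing the central technical object, and the ``compatibility lemma'' you defer to is where the real content lives.

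The paper does not biject ${\tt Prism}(\boldsymbol\lambda,\mathbf d)$ directly with $\bigsqcup_w {\tt Prism}(w)$. Instead it introduces a common target: the subword complex $\Delta(Q_{n\times n},A)$ for $A=A_{\boldsymbol\lambda,\mathbf d}$. There is a weight-preserving map $\Phi_{\boldsymbol\lambda,\mathbf d}$ sending each prism tableau to a face of this complex, obtained by overlaying the plus diagrams of the component tableaux. The paper then shows (Theorem~\ref{thm:wtbijection}) that $\Phi_{\boldsymbol\lambda,\mathbf d}$ restricts to a bijection from ${\tt Prism}(\boldsymbol\lambda,\mathbf d)$ onto $F_{\tt max}(\Delta_A)$, the set of codimension-zero facets. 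Independently, $F_{\tt max}(\Delta_A)$ is the disjoint union over $w\in{\tt MinPerm}(A)$ of the pipe-dream facets $F(\Delta_w)$, and those enumerate $\mathfrak S_w$ by \cite{Fomin.Kirillov,Bergeron.Billey}. The permutation $w(\mathcal T)$ you are looking for is exactly the reduced word read from the plus diagram $P_{\mathcal T}$; the point is that one does not need to land back in a prism model for $w$ at all.

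Your proposed ``compatibility lemma''---that different prism presentations of the same permutation yield the same set of minimal stable tableaux up to weight---is not a minor technicality. It is essentially the $A=w$ case of Theorem~\ref{theorem:schubertsum} itself, so invoking it here is circular. Without the subword-complex intermediary, there is no evident direct map from a tableau shaped by $(\boldsymbol\lambda,\mathbf d)$ to one shaped by $(\boldsymbol\lambda_w,\mathbf d_w)$: the underlying Young diagrams, the number of colors, and the descent data are all different. The paper's route sidesteps this entirely because the facets of $\Delta_w$ do not depend on any prism presentation. Likewise, your identification of ``no unstable triples'' with Bruhat-minimality of $w(\mathcal T)$ is not what happens: minimality of $\mathcal T$ (the degree condition) is what forces $w(\mathcal T)\in{\tt MinPerm}(A)$, while the unstable-triple condition singles out a unique representative within each fiber $\Phi_{\boldsymbol\lambda,\mathbf d}^{-1}(\mathcal F_P)$ over a given facet (Proposition~\ref{prop:otherlatticethings}). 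These roles are separate and your proposal conflates them.
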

  Here, ${\tt MinPerm}(A)$ denotes the set permutations above $A$ in ${\sf ASM}(n)$ which have the minimum possible length.  
Our proof of Theorem~\ref{theorem:schubertsum} is purely combinatorial; we give a bijection between ${\tt Prism}(\boldsymbol \lambda,\mathbf d)$ and the set of facets of the \emph{subword complexes} (\cite{knutson2004subword}) for each $w\in{\tt MinPerm}(A)$.  The Schubert polynomial is a weighted sum over the facets of its corresponding subword complex \cite{Fomin.Kirillov,Bergeron.Billey,knutson2005grobner}.

In Section~\ref{subsection:biject}, we define a map from the set of all prism tableaux to a simplicial complex $\Delta(Q_{n\times n},A)$, which is itself a union subword complexes. Restricted to ${\tt Prism}(\boldsymbol \lambda,\mathbf d)$, this map is a bijection onto the set of maximal dimensional facets in $\Delta(Q_{n\times n},A)$ (see Theorem~\ref{thm:wtbijection}).

 $\mathfrak A_{\boldsymbol \lambda,\mathbf d}$ also has a geometric interpretation; it is the \emph{multidegree} of an \emph{alternating sign matrix variety}.   
Write ${\sf Mat}(n)$ for the space of $n\times n$ matrices over an algebraically closed field $\Bbbk$.  Given $M\in {\sf Mat}(n)$, let  $M_{[i],[j]}$ be the submatrix of $M$ which consists of the first $i$ rows and $j$ columns of $M$.  
We define the {\bf alternating sign matrix variety} 
\begin{equation}
X_A:=\{M\in{\sf Mat}(n): {\rm rank}(M_{[i],[j]})\leq r_A(i,j) \text{ for all } 1\leq i,j\leq n\}.
\end{equation}
If $w\in \mathcal S_n$, then $X_w$ is a {\bf matrix Schubert variety} as defined in \cite{fulton1992flags}.  

ASM varieties are stable under multiplication by the group of  invertible, diagonal matrices $\sf T\subset {\sf GL}(n)$.  There is a corresponding $\mathbb Z^{n}$ grading and multidegree
\[\mathcal C(X_A;\mathbf x)\in \mathbb Z[x_1,\ldots,x_n].\]  Whenever $w\in \mathcal S_n$,  we have $\mathfrak S_w=\mathcal C(X_w;\mathbf x)$.  This was shown in \cite{knutson2005grobner} and is equivalent to earlier statements in the language of equivariant cohomology  \cite{feher2003schur} and degeneracy loci \cite{fulton1992flags}.
We show $\mathfrak A_{\boldsymbol \lambda,\mathbf d}$ is the multidegree of the ASM variety $X_{A_{\boldsymbol \lambda,\mathbf d}}$.
\begin{theorem}
\label{theorem:main}
Fix $\boldsymbol \lambda$ and $\mathbf d$ as in (\ref{eqn:tupledef}).   
Then
\[\mathcal C(X_{A_{\boldsymbol \lambda,\mathbf d}};\mathbf x)=\mathfrak A_{\boldsymbol \lambda,\mathbf d}.\]
\end{theorem}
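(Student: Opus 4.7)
The plan is to combine Theorem~\ref{theorem:schubertsum} with a geometric decomposition of $X_A$, where $A:=A_{\boldsymbol\lambda,\mathbf d}$.  Since $\mathfrak S_w = \mathcal C(X_w;\mathbf x)$ for permutations $w$ by \cite{knutson2005grobner}, Theorem~\ref{theorem:schubertsum} reduces the desired identity to the purely geometric statement
$$\mathcal C(X_A;\mathbf x) \;=\; \sum_{w \in {\tt MinPerm}(A)} \mathcal C(X_w;\mathbf x),$$
and this is what I would prove.

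First I would identify the top-dimensional components of $X_A$.  The defining rank conditions $\operatorname{rank}(M_{[i],[j]}) \le r_A(i,j)$ are preserved by row operations from $B_-$ and column operations from $B_+$, so $X_A$ is invariant under the $B_-\times B_+$ action on ${\sf Mat}(n)$.  By Fulton \cite{fulton1992flags}, the irreducible $B_-\times B_+$-invariant subvarieties of ${\sf Mat}(n)$ are exactly the matrix Schubert varieties $X_w$, of codimension $\ell(w)$.  The inclusion $X_w \subseteq X_A$ holds iff $r_w \le r_A$, iff $w \ge A$ in the ASM lattice, so the top-dimensional irreducible components of $X_A$ are precisely $\{X_w : w \in {\tt MinPerm}(A)\}$.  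Provided each appears with multiplicity one in the scheme structure on $X_A$, additivity of multidegree over top-dimensional components yields the displayed identity.

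The main obstacle is therefore scheme-theoretic: showing that the ideal $I_A$ generated by the rank minors (of size $r_A(i,j)+1$ in the northwest $i\times j$ submatrix) is generically reduced along each top component.  I would handle this via a Gr\"obner degeneration in the spirit of Knutson-Miller \cite{knutson2005grobner}: under the antidiagonal term order, argue that $\operatorname{in}(I_A)$ is the squarefree Stanley-Reisner ideal of the complex $\Delta(Q_{n\times n},A)$ constructed in Section~\ref{subsection:biject}.  Squarefreeness gives reducedness, and multidegree invariance under flat degeneration equates $\mathcal C(X_A;\mathbf x)$ with the multidegree of the Stanley-Reisner ring, which is the sum over maximal facets of the monomials in the variables outside each facet.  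Theorem~\ref{thm:wtbijection} identifies those facets in a weight-preserving bijection with ${\tt Prism}(\boldsymbol\lambda,\mathbf d)$, giving $\mathcal C(X_A;\mathbf x) = \mathfrak A_{\boldsymbol\lambda,\mathbf d}$ directly.  The crux will be the Gr\"obner basis assertion; a promising path is to realize $\sqrt{I_A} = \bigcap_{w \in {\tt MinPerm}(A)} I_w$ and use Frobenius splitting on ${\sf Mat}(n)$ to compatibly degenerate each matrix Schubert ideal as in \cite{knutson2005grobner}, simultaneously showing that $I_A$ is already radical.
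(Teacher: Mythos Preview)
Your proposal is correct and follows essentially the same path as the paper. The paper in fact gives both of the arguments you sketch: the primary proof combines Theorem~\ref{theorem:schubertsum} with Proposition~\ref{prop:cohosum}, which identifies the top-dimensional components of $X_A$ as $\{X_w : w\in{\tt MinPerm}(A)\}$ via the $B_-\times B_+$ orbit decomposition (Proposition~\ref{prop:intersectandunion}) and then applies additivity; the second proof is exactly your Gr\"obner route through Proposition~\ref{proposition:maindegeneration} and Theorem~\ref{thm:wtbijection}.

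One small remark: your ``main obstacle'' about generic reducedness along each top component is not actually needed for the first argument. The paper defines $\mathcal C(X_A;\mathbf x)$ as the multidegree of the \emph{coordinate ring} of the variety $X_A$, which is automatically reduced, so additivity already gives a multiplicity-free sum over the top-dimensional $X_w$. Radicality of the determinantal ideal $I_A$ itself (established via Frobenius splitting, as you suggest) is used in the paper for the Gr\"obner-geometric second proof and to identify $\Delta({\tt init}(I_A))$ with $\Delta(Q_{n\times n},A)$, but it is not required for the first proof of Theorem~\ref{theorem:main}.
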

The irreducible components of $X_A$ are always matrix Schubert varieties.   Theorem~
\ref{theorem:main} follows from Theorem~\ref{theorem:schubertsum} and the additivity of multidegrees.  

 We also discuss the explicit connection of prism tableaux to Gr\"obner geometry of $X_A$.
Let $Z=(z_{ij})_{i,j=1}^n$ be the generic $n\times n$ matrix.  
Define the {\bf ASM ideal} by
\begin{equation}
\label{def:detIdeal}
I_A:=\langle  \text{ minors of size } r_A(i,j)+1 \text{ in } Z_{[i],[j]}\rangle.
\end{equation}  It is immediate  that $I_A$ provides set-theoretic equations for $X_A$.  For any $A\in {\sf ASM}(n)$, we $I_A$ is radical.   This follows from the Frobenius splitting argument given in \cite[Section~7.2]{knutson2009frobenius}.  We make the connection to ASM varieties explicit.
\begin{proposition}[{\cite{knutson2009frobenius}}]
\label{proposition:maindegeneration}
Fix any antidiagonal term order $\prec$ on $\Bbbk[Z]$.
\begin{enumerate}
\item The essential (and hence defining) generators of $I_A$ form a Gr\"obner basis under  $\prec$.
\item $I_A$ is radical and its initial ideal is a square-free monomial ideal.
\item The Stanley-Reisner complex of ${\tt init}(X_A)$ is $\Delta(Q_{n\times n}, A)$.  
\end{enumerate}
\end{proposition}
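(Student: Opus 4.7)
The plan is to import the Frobenius splitting machinery of Knutson--Miller and Knutson, which has been developed precisely to handle statements of this kind in a uniform way, and then translate it into the three pieces asked for. First I would set up a Frobenius splitting of the coordinate ring $\Bbbk[Z]$ (in positive characteristic; the characteristic-zero statements then follow by standard semicontinuity/spreading-out) that compatibly splits the ideal $I_A$ for every $A\in{\sf ASM}(n)$ simultaneously. The natural source of such a splitting is the anti-diagonal Bott--Samelson/ladder-determinantal splitting used in \cite{knutson2009frobenius}: one builds a splitting on the big opposite cell of the flag variety whose compatibly split subvarieties include the preimages of all opposite Schubert varieties, and checks that $X_A$ is a union/intersection of these inside the matrix affine space. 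Compatibility of $I_A$ with the splitting immediately gives that $I_A$ is radical, which is part of item (2).

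Next I would execute the degeneration to the antidiagonal initial ideal. The key general fact I would invoke is that if $I$ is compatibly Frobenius split and $\prec$ is a term order realized by a one-parameter subgroup acting compatibly with the splitting, then ${\tt init}_{\prec}(I)$ is again compatibly Frobenius split, hence radical; and since ${\tt init}_{\prec}(I)$ is automatically a monomial ideal, being radical forces it to be square-free. For an antidiagonal term order this one-parameter subgroup scaling is exactly the one used in \cite{knutson2005grobner,knutson2009frobenius}, and its compatibility is built into the construction of the splitting. This gives item (2) in full, and the Gr\"obner basis claim in item (1) follows by the standard equivalence: since the essential minors generate $I_A$ and their antidiagonal leading terms lie in ${\tt init}_{\prec}(I_A)$, it suffices to check that these leading terms already generate ${\tt init}_\prec(I_A)$. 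That check in turn reduces to comparing Hilbert series, or equivalently to showing that the square-free monomial ideal generated by the antidiagonals has the same Stanley--Reisner complex as ${\tt init}_\prec(I_A)$.

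For item (3), and to close the Hilbert-series check above, I would identify the Stanley--Reisner complex explicitly. The facets of the antidiagonal Stanley--Reisner complex of a product of determinantal conditions ${\rm rank}(M_{[i],[j]})\leq r_A(i,j)$ are known to be indexed by ``pipe-dream''-like objects arising from the word $Q_{n\times n}$; more precisely, each essential rank condition contributes the subword complex of a Grassmannian reduced word, and taking the union over $w\in{\tt MinPerm}(A)$ yields exactly the simplicial complex $\Delta(Q_{n\times n},A)$ defined in the text. So I would check two things: (a) every face that avoids an antidiagonal of an essential minor corresponds to a face of $\Delta(Q_{n\times n},A)$, and (b) conversely every facet of $\Delta(Q_{n\times n},A)$ yields a non-face of the antidiagonal monomial ideal of $I_A$. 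Both are verified combinatorially by the standard correspondence between antidiagonals in $Z_{[i],[j]}$ and crossings in the pipe dream reading of the word $Q_{n\times n}$.

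The main obstacle, and the only conceptually delicate step, is producing the compatible Frobenius splitting for a \emph{general} ASM $A$ rather than just a permutation, since $X_A$ need not be irreducible: one must check that the splitting provided by \cite{knutson2009frobenius} really does compatibly split every ideal $I_A$ (not merely the matrix Schubert ideals $I_w$ for $w\in\mathcal{S}_n$). The clean way to do this is to observe that an intersection of compatibly split subschemes is compatibly split, and that $X_A=\bigcap_{w\in{\tt MinPerm}(A)} X_w$ set-theoretically; combined with the Frobenius splitting for matrix Schubert varieties from \cite{knutson2005grobner,knutson2009frobenius}, this upgrades the set-theoretic equality to a scheme-theoretic one and yields radicality and the Gr\"obner basis property simultaneously. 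Everything else in the proposition is then formal.
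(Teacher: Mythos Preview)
Your outline has the right architecture---Frobenius splitting to get radicality, then identify the initial complex---but the decomposition you propose at the crucial step is wrong. You write $X_A=\bigcap_{w\in{\tt MinPerm}(A)} X_w$, but in this paper's conventions $w\in{\tt MinPerm}(A)$ means $w\geq A$, hence $X_w\subseteq X_A$; the permutations in ${\tt MinPerm}(A)$ give the irreducible \emph{components} of $X_A$, and the correct statement is $X_A=\bigcup_{w\in{\tt Perm}(A)} X_w$ (Proposition~\ref{prop:intersectandunion}). A union of compatibly split subschemes is of course still compatibly split, so radicality survives, but you cannot recover $I_A$ or its Gr\"obner basis from this direction: knowing Gr\"obner bases for the $I_w$ does not hand you one for their intersection $\bigcap_w I_w$.

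The decomposition that actually drives the proof runs the other way. One writes $A=\vee\,{\tt biGr}(A)$, so that $X_A=\bigcap_{u\in{\tt biGr}(A)} X_u$ and, on the level of ideals, $I_A=\sum_{u\in{\tt biGr}(A)} I_u$ with ${\tt EssGen}(A)=\bigcup_u {\tt EssGen}(u)$ (Lemma~\ref{lemma:rankIdealContain}). Each $I_u$ is a Schubert determinantal ideal whose essential minors are already an antidiagonal Gr\"obner basis; the Frobenius-splitting result in \cite{knutson2009frobenius} (invoked via \cite[Theorem~6]{knutson2005grobner}) then says the concatenation is a Gr\"obner basis for the sum. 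This immediately gives ${\tt init}(I_A)=\sum_u{\tt init}(I_u)$, which is generated by square-free antidiagonal monomials, hence radical---so (2) follows without the detour through degeneration-compatibility of the splitting or a Hilbert-series comparison. For (3), the identity ${\tt init}(I_A)=\sum_u{\tt init}(I_u)$ translates via Lemma~\ref{lemma:SRComplex} into $\Delta({\tt init}(I_A))=\bigcap_u \Delta(Q_{n\times n},u)$, and Proposition~\ref{prop:interesctunion}(2) identifies that intersection with $\Delta(Q_{n\times n},A)$. Your proposed route to (3) through ``union over $w\in{\tt MinPerm}(A)$'' is the description of $\Delta(Q_{n\times n},A)$ from Proposition~\ref{prop:interesctunion}(1), but it is not what you get by applying Stanley--Reisner to ${\tt init}(I_A)$ directly; you would still need the biGrassmannian step to connect the two.
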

Since ${\tt Prism}(\boldsymbol \lambda,\mathbf d)$ is in weight preserving bijection with the facets of maximum dimension in $\Delta(Q_{n\times n}, A)$, this yields a second proof of Theorem~\ref{theorem:main}.

\section{Prism tableaux and ASMs}

\label{section:prismandasm}

\subsection{Rothe diagrams for ASMs}

\label{subsection:rothe}
We start by presenting a generalization of Rothe diagrams to ASMs.   Following \cite{mills1983alternating}, say $A=(a_{i j})_{i,j=1}^n\in{\sf ASM}(n)$ has an {\bf inversion} in position $(i,j)$ if 
\begin{equation}
\label{eqn:inversion1}
\sum_{(k, l):i<k\text{ and }j<l}a_{i l}a_{k j}=1.
\end{equation}
Write $[n]:=\{1,\ldots,n\}$.  Then 
\begin{equation}
\label{eqn:asmdiagram}
D(A):=\{(i,j):(i,j) \text{ is an inversion of } A\}\subset n\times n
\end{equation} is the {\bf Rothe diagram} of $A$. We represent $D(A)$ graphically.  Our convention is to visually indicate the ASM by placing a black dot for each 1 in $A$ and a white dot for each $-1$.     
The {\bf essential set} $\mathcal Ess(A)$ consists of the southeast most corners of each connected component of $D(A)$,
 \[\mathcal Ess(A):=\{(i,j)\in D(A):(i+1,j),(i,j+1)\not \in D(A)\}.\]

\begin{example}
\label{example:asmDiagram}

\[ A=\left(\begin{matrix} 0 & 0 & 0 & 1\\ 0&1&0&0\\ 1&-1&1&0\\ 0 &1 & 0 & 0\end{matrix}\right)
\hspace{3em} 
D(A)=\begin{tikzpicture}[x=1.3em,y=1.3em,baseline=3.6em]
	\draw[step=1,gray!30,very thin] (0,1) grid (4,5); 
      \draw[color=black, thick](0,1)rectangle(4,5);
     \filldraw[color=black, fill=gray!30, thick](0,4)rectangle(1,5);
     \filldraw[color=black, fill=gray!30, thick](1,4)rectangle(2,5);
     \filldraw[color=black, fill=gray, thick](2,4)rectangle(3,5);
     \filldraw[color=black, fill=gray, thick](0,3)rectangle(1,4);
     \filldraw[color=black, fill=gray, thick](1,2)rectangle(2,3);
     \filldraw [black](3.5,4.5)circle(.1);
     \filldraw [black](1.5,3.5)circle(.1);
     \filldraw [black](.5,2.5)circle(.1);
     \filldraw [color=black,fill=white,thick](1.5,2.5)circle(.1);
     \filldraw [black](2.5,2.5)circle(.1);
     \filldraw [black](1.5,1.5)circle(.1);
     \draw[thick] (3.5,4.5)--(3.5,4);
     \draw[thick] (4,4.5)--(3.5,4.5);
     \draw[thick] (1.5,3.5)--(1.5,3);
     \draw[thick] (2,3.5)--(1.5,3.5);
     \draw[thick] (.5,2.5)--(.5,2);
     \draw[thick] (1,2.5)--(.5,2.5);
     \draw[thick] (2.5,2.5)--(2.5,2);
     \draw[thick] (3,2.5)--(2.5,2.5);
     \draw[thick] (1.5,1.5)--(1.5,1);
     \draw[thick] (2,1.5)--(1.5,1.5);
     \draw[thick] (3,3.5)--(2,3.5);
     \draw[thick] (4,3.5)--(3,3.5);
     \draw[thick] (4,2.5)--(3,2.5);
     \draw[thick] (3,1.5)--(2,1.5);
     \draw[thick] (4,1.5)--(3,1.5);
     \draw[thick] (3.5,4)--(3.5,3);
     \draw[thick] (3.5,3)--(3.5,2);
     \draw[thick] (.5,2)--(.5,1);
     \draw[thick] (2.5,2)--(2.5,1);
     \draw[thick] (3.5,2)--(3.5,1);
     \end{tikzpicture}\]
The boxes of the diagram of $A$ are shaded gray.  The essential boxes are dark gray. \qed
\end{example}

The sum in (\ref{eqn:inversion1}) factorizes 
\begin{equation}
\label{eqn:inversion2}
\sum_{(k,l):i<k\text{ and }j<l}a_{i l}a_{k j}=\left(\sum_{k=i+1}^na_{kj}\right)\left(\sum_{l=j+1}^na_{il}\right)=\left(1-\sum_{k=1}^ia_{kj}\right)\left(1-\sum_{l=1}^ja_{il}\right).
\end{equation}
See \cite{Bousquet.Habsieger}.  By conditions \ref{item:A1} and  \ref{item:A2} the factors in RHS of (\ref{eqn:inversion2}) product are always 0 or 1.  In order for $(i,j)$ to be an inversion, both must be 1. Visually, this amounts to striking out hooks to the right and below each black dot which stop just before they encounter a box which contains a white dot.  The boxes which remain are the elements of $D(A)$.

Notice that  $D(A)$ is similar to the ASM diagram defined by A.~Lascoux \cite{lascoux2008chern}.  However, our conventions on inversions differ; we include the set of \emph{negative inversions} in our diagram.
If $w$ is a permutation matrix, $D(w)$ and $\mathcal Ess(w)$ coincide with the usual Rothe diagram and essential set, as defined in \cite{fulton1992flags}.
Any permutation is uniquely determined by the restriction of the corner sum function to its essential set \cite[Lemma 3.10]{fulton1992flags}.  The same statement holds more generally for ASMs, see Proposition~\ref{prop:bigrassList}.

 Given $w\in \mathcal S_n$, the permutation matrix of $w$ is an $n\times n$ matrix with a one in each of the $(i,w(i))$ positions and zeros elsewhere.  This defines an embedding of $\mathcal S_n\hookrightarrow {\sf ASM}(n)$.  We freely identify each permutation with its permutation matrix. 
  The length $\ell(w)$ of $w\in \mathcal S_n$ is the number of inversions, or equivalently $\ell(w)=|D(w)|$. 
Say 
\begin{equation}
\label{eqn:degA}
{\tt deg}(A)=\min\{\ell(w):w\in \mathcal S_n \text{ and } w\geq A\}.  
\end{equation}
\begin{example}
In general ${\tt deg}(A)\neq |D(A)|$.  For example, suppose $A$ is the ASM whose diagram is pictured below.
\begin{center}
$D(A)=\begin{tikzpicture}[x=1.3em,y=1.3em,baseline=3.6em]
     \draw[step=1,gray!30,very thin](0,1) grid (4,5);
     \draw[color=black, thick](0,1)rectangle(4,5);
     \filldraw[color=black, fill=gray!30, thick](0,4)rectangle(1,5);
     \filldraw[color=black, fill=gray, thick](1,4)rectangle(2,5);
     \filldraw[color=black, fill=gray, thick](0,3)rectangle(1,4);
     \filldraw[color=black, fill=gray, thick](2,3)rectangle(3,4);
     \filldraw[color=black, fill=gray, thick](1,2)rectangle(2,3);
     \filldraw [black](2.5,4.5)circle(.1);
     \filldraw [black](1.5,3.5)circle(.1);
     \filldraw [color=black,fill=white,thick](2.5,3.5)circle(.1);
     \filldraw [black](3.5,3.5)circle(.1);
     \filldraw [black](.5,2.5)circle(.1);
     \filldraw [color=black,fill=white,thick](1.5,2.5)circle(.1);
     \filldraw [black](2.5,2.5)circle(.1);
     \filldraw [black](1.5,1.5)circle(.1);
     \draw[thick] (2.5,4.5)--(2.5,4);
     \draw[thick] (3,4.5)--(2.5,4.5);
     \draw[thick] (1.5,3.5)--(1.5,3);
     \draw[thick] (2,3.5)--(1.5,3.5);
     \draw[thick] (3.5,3.5)--(3.5,3);
     \draw[thick] (4,3.5)--(3.5,3.5);
     \draw[thick] (.5,2.5)--(.5,2);
     \draw[thick] (1,2.5)--(.5,2.5);
     \draw[thick] (2.5,2.5)--(2.5,2);
     \draw[thick] (3,2.5)--(2.5,2.5);
     \draw[thick] (1.5,1.5)--(1.5,1);
     \draw[thick] (2,1.5)--(1.5,1.5);
     \draw[thick] (4,4.5)--(3,4.5);
     \draw[thick] (4,2.5)--(3,2.5);
     \draw[thick] (3,1.5)--(2,1.5);
     \draw[thick] (4,1.5)--(3,1.5);
     \draw[thick] (3.5,3)--(3.5,2);
     \draw[thick] (.5,2)--(.5,1);
     \draw[thick] (2.5,2)--(2.5,1);
     \draw[thick] (3.5,2)--(3.5,1);
     \end{tikzpicture}$ 
\hspace{2em}
$r_A=\left(\begin{matrix} 0 & 0 & 1 & 1\\ 0&1&1&2\\ 1&1&2&3\\ 1 &2 & 3 & 4\end{matrix}\right)$
\hspace{2em}
$r_{3412}=\left(\begin{matrix} 0 & 0 & 1 & 1\\ 0&0&1&2\\ 1&1&2&3\\ 1 &2 & 3 & 4\end{matrix}\right)$
\end{center}
Since $r_{3412}<r_A$ we have $3412\geq A$.  Therefore \[{\tt deg}(A)\leq \ell(3412)= 4<|D(A)|.\]  By checking all $w\geq A$, the reader may verify ${\tt deg}(A)=4$.\qed
\end{example}

\subsection{Grassmannian and biGrassmannian permutations}

We recall standard facts on permutations.  See \cite{manivel2001symmetric} as a reference.
A {\bf descent} of $w\in \mathcal S_n$ is a position $i$ so that $w(i)>w(i+1)$.  A permutation  is {\bf Grassmannian} if it has a unique descent.  Let $\mathcal G_n$ denote the set of Grassmannian permutations in $\mathcal S_n$.  
If $u\in \mathcal G_n$, write ${\tt des}(u)$ for the position of its descent.  

Let $u\in \mathcal G_n$.    Let  
\[\lambda_i^{(u)}=u({\tt des}(u)-i+1)-({\tt des}(u)-i+1).\]  
Equivalently, $\lambda_i^{(u)}$ is the number of boxes in row ${\tt des}(u)-i+1$ of $D(u)$.
Since $u$ has a unique descent at ${\tt des}(u)$, we have $\lambda_i^{(u)}\geq \lambda_{i+1}^{(u)}$ for all $i=1,\ldots,{\tt des}(u)-1$.  Then
\[\lambda^{(u)}=(\lambda_1^{(u)}, \lambda_2^{(u)}, \ldots, \lambda_{{\tt des}(u)}^{(u)})\]
is a partition with $\ell(\lambda^{(u)})\leq {\tt des}(u)$ and $\lambda_1^{(u)}\leq n-{\tt des}(u)$.

Write $a\times b$ for the partition whose Young diagram has $a$ rows of length $b$.
\begin{lemma}
\label{lemma:grassbiject}
The map $u\mapsto (\lambda^{(u)},{\tt des}(u))$ defines a bijection between $\mathcal G_n$ and pairs $(\lambda,d)$ with \[\lambda\subseteq d\times (n-d)\] i.e. partitions with $\lambda_1\leq n-d$ and $\ell(\lambda)\leq d$.
\end{lemma}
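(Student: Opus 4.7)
The plan is to prove the bijection by exhibiting an explicit inverse and checking that both compositions give the identity, leveraging the classical observation that a Grassmannian permutation is determined by its first $d$ values.

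First, I would verify the forward map is well-defined. Given $u \in \mathcal{G}_n$ with unique descent at $d := {\tt des}(u)$, the values $u(1) < u(2) < \cdots < u(d)$ are strictly increasing (no descent occurs before position $d$), which forces $u(d-i+1) - u(d-i) \geq 1$ for $i = 1, \ldots, d-1$; after rearrangement this gives $\lambda^{(u)}_i \geq \lambda^{(u)}_{i+1}$, so $\lambda^{(u)}$ is a partition. The inequality $\lambda^{(u)}_1 = u(d) - d \leq n - d$ uses $u(d) \leq n$, and $\ell(\lambda^{(u)}) \leq d$ is immediate from the definition since $\lambda^{(u)}_i = u(d-i+1) - (d-i+1) \geq 0$ has at most $d$ parts.

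Second, I would construct the inverse explicitly. Given $(\lambda, d)$ with $\lambda \subseteq d \times (n-d)$, define
\[
u(d-i+1) := \lambda_i + (d-i+1) \quad \text{for } i = 1, \ldots, d,
\]
where $\lambda_i := 0$ when $i > \ell(\lambda)$, and fill the remaining positions $u(d+1) < u(d+2) < \cdots < u(n)$ with the elements of $[n] \setminus \{u(1), \ldots, u(d)\}$ in increasing order. Three things must be checked: (i) the values $u(1), \ldots, u(d)$ are distinct elements of $[n]$, which follows from $\lambda_i \geq \lambda_{i+1}$ (distinctness) together with $\lambda_1 \leq n-d$ and $\lambda_d \geq 0$ (containment in $[n]$); (ii) both $u|_{[1,d]}$ and $u|_{[d+1,n]}$ are strictly increasing by construction, so the only possible descent of $u$ is at position $d$; and (iii) the descent at $d$ is indeed strict whenever $\lambda \neq \emptyset$, since then $u(d) = \lambda_1 + d > d \geq u(d+1)$ forces a drop. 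A direct substitution then shows that the two maps compose to the identity on both sides.

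The main conceptual issue is the degenerate case $\lambda = \emptyset$: the construction of the inverse map produces the identity permutation, which has no descents and thus fails the strict definition of Grassmannian as stated. This is a minor bookkeeping obstacle rather than a mathematical one, and can be resolved either by tacitly enlarging $\mathcal{G}_n$ to include the identity (treating ${\tt des}$ as non-unique in that case), or by excluding $\lambda = \emptyset$ from the right-hand side. The formulation \eqref{eqn:tupledef} in the paper allows trivial partitions via the length condition $d_i \geq \ell(\lambda^{(i)})$, suggesting the former convention; I would make this explicit at the start of the proof. Beyond this edge case, the remaining verifications are straightforward inequality manipulations.
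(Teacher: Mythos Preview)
The paper does not supply a proof of this lemma; it is stated as a standard fact (the surrounding section cites \cite{manivel2001symmetric} for background on permutations), and the inverse map is simply named $[\lambda,d]_g$ in the line following the statement. Your proposal is correct and is exactly the classical argument one would expect.

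Two small remarks. First, your edge-case concern is well placed: under the paper's strict definition (``a unique descent''), the identity is not in $\mathcal G_n$, while the empty partition does satisfy $\lambda\subseteq d\times(n-d)$ for every $d$. The paper resolves this not by enlarging $\mathcal G_n$ but by the separate convention ``If $\lambda=(0)$ is the empty partition, then let $[\lambda,d]_g=\mathrm{id}$'' immediately after the lemma; so the cleanest fix is to restrict the codomain of the bijection to nonempty $\lambda$ and treat the identity separately, as the paper implicitly does. Second, in your step (iii) the inequality $u(d+1)\leq d$ deserves one sentence of justification: since $\lambda_1\geq 1$ forces $u(d)>d$, at most $d-1$ of the values $u(1),\ldots,u(d)$ lie in $[d]$, so the complement meets $[d]$ and its minimum $u(d+1)$ is at most $d$.
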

Let $[\lambda,d]_g$ be the Grassmannian with $\lambda^{([\lambda,d]_g)}=\lambda$ and ${\tt des}([\lambda,d]_g)=d$.  If $\lambda=(0)$ is the empty partition, then  let $[\lambda,d]_g={\rm id}$.
Let $(\boldsymbol \lambda,\mathbf d)$ be as in (\ref{eqn:tupledef}). We can always choose $n$ large enough so that $\lambda^{(i)}\subseteq d_i\times (n-d_i)$ for all $i=1,\ldots,k$.  Then let
\begin{equation}
\mathbf u_{\boldsymbol \lambda,\mathbf d}:=([\lambda^{(1)}, d_1]_g,\ldots,[\lambda^{(k)}, d_k]_g)
\end{equation}
and
\begin{equation}
\label{eqn:associatedasm}
A_{\boldsymbol \lambda,\mathbf d}:=\vee\mathbf u_{\boldsymbol \lambda,\mathbf d}\in{\sf ASM}(n).
\end{equation}

 A permutation is {\bf biGrassmannian} if both it, and its inverse, are Grassmannian.  Write $\mathcal B_n$ for the set of biGrassmannian permutations in $\mathcal S_n$.  A permutation is biGrassmannian if and only if its diagram is a rectangle. 
  Elements of $\mathcal B_n$ are naturally labeled by triples of integers $(i,j,r)$ which satisfy the following conditions:
\begin{enumerate}[label=(B\arabic*),ref=(B\arabic*)]
\item \label{item:B1} $1\leq i,j$
\item \label{item:B2} $0\leq r <\min(i,j)$
\item \label{item:B3} $i+j-r\leq n$.
\end{enumerate}
Let $I_k$ denote the $k\times k$ identity matrix.  Then we write 
\begin{equation}
\label{eqn:biGrDef}
[i,j,r]_b:=\left(\begin{array}{c|c|c|c}
I_{r}& &&\\
\hline
&&I_{i-r}&\\
\hline
&I_{j-r}&&\\
\hline
&&&I_{n-i-j+r}\\
\end{array}
\right)
\end{equation}
 for the (unique) biGrassmannian encoded by this triple.    In the case $r=\min(i,j)$, let $[i,j,r]_b$ be the identity permutation.  

There are multiple labeling conventions for biGrassmannians in the literature (see e.g. \cite{lascoux1996treillis}, \cite{reading2002order}, \cite{kobayashi2013more}).  We have chosen ours so the following properties hold.
\begin{lemma}
\label{lemma:biGrassmannianchar}
Let $u=[i,j,r]_b\in \mathcal B_n$.  Then
\begin{enumerate}
\item  $\mathcal Ess(u)=\{(i,j)\}$
\item $r_u(i,j)=r$.
\item ${\tt des}(u)=i$
\item $\lambda^{(u)}=(i-r_u(i,j))\times (j-r_u(i,j))$.
\end{enumerate}
\end{lemma}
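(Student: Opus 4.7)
The plan is to work directly from the block-matrix definition of $u = [i,j,r]_b$ in (\ref{eqn:biGrDef}). Reading off the four identity blocks, the one-line notation is
\[
u = (1, 2, \ldots, r,\ j+1, j+2, \ldots, i+j-r,\ r+1, r+2, \ldots, j,\ i+j-r+1, \ldots, n),
\]
so the $1$s of the permutation matrix sit at $(k,k)$ for $k\le r$; at $(k,j+k-r)$ for $r<k\le i$; at $(k,r+k-i)$ for $i<k\le i+j-r$; and at $(k,k)$ for $k>i+j-r$. With this explicit description in hand, all four claims reduce to elementary bookkeeping. I would carry them out in the order (2), (1), (3), (4).

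For (2), the corner sum $r_u(i,j)$ counts the $1$s in the top-left $i\times j$ submatrix. The first identity block contributes all $r$ of its $1$s; the second block's $1$s lie in columns $j+1,\ldots,i+j-r$, strictly right of column $j$; the remaining two blocks have $1$s only in rows below row $i$. Hence $r_u(i,j)=r$. For (1), I would identify $D(u)$ using the Rothe rule $(a,b)\in D(u)$ iff $u(a)>b$ and $u^{-1}(b)>a$. A case split on the row-block containing $a$ immediately rules out $a\le r$ and $a>i$, while for $r<a\le i$ only the columns $r<b\le j$ survive. So $D(u)$ is the $(i-r)\times(j-r)$ rectangle in rows $r+1,\ldots,i$ and columns $r+1,\ldots,j$, whose unique southeast corner is $(i,j)$.

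For (3), the explicit one-line notation gives $u(k)<u(k+1)$ throughout each single block. At $k=i$ one has $u(i)=j+i-r$ and $u(i+1)=r+1$, and the inequality $j+i-r>r+1$ is immediate from $r<\min(i,j)$; at $k=i+j-r$ one has $u(i+j-r)=j<i+j-r+1=u(i+j-r+1)$, which is not a descent. Hence ${\tt des}(u)=i$. For (4), with ${\tt des}(u)=i$ in hand I evaluate $\lambda^{(u)}_s=u(i-s+1)-(i-s+1)$ block by block: if $s\le i-r$ then $i-s+1$ lies in the second block, yielding $\lambda^{(u)}_s=j-r$; if $s>i-r$ then $i-s+1$ lies in the first block, yielding $\lambda^{(u)}_s=0$. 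Thus $\lambda^{(u)}$ is the $(i-r)\times(j-r)$ rectangle, which by (2) equals $(i-r_u(i,j))\times(j-r_u(i,j))$.

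The only mild obstacle is keeping the case analysis for $D(u)$ tidy, since there are four row-blocks and four column-blocks of candidate cells. But the large majority of combinations are eliminated at sight: either the $1$ in the given row lies weakly left of column $b$ (so $u(a)\le b$), or the $1$ in the given column lies weakly above row $a$ (so $u^{-1}(b)\le a$). A single look at the block matrix makes this transparent, so the argument is essentially mechanical.
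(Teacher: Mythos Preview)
Your proposal is correct and follows precisely the approach the paper indicates: the paper's entire proof is the one-line remark ``Lemma~\ref{lemma:biGrassmannianchar} is immediate from (\ref{eqn:biGrDef}),'' and you have simply spelled out that immediacy by reading off the one-line notation from the block matrix and verifying each of (1)--(4) directly. The only minor omission is that in part (3) you check the block boundaries $k=i$ and $k=i+j-r$ but not $k=r$; of course $u(r)=r<j+1=u(r+1)$ is obvious from your displayed one-line notation, so this is not a genuine gap.
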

Lemma~\ref{lemma:biGrassmannianchar} is immediate from (\ref{eqn:biGrDef}).

\subsection{Prism tableaux}
\label{subsect:prism}

Each partition $\lambda$ has an associated {\bf Young diagram} which consists of left justified boxes with $\lambda_1$ boxes in the bottom row, $\lambda_2$ in the next, and so on.  We will freely identify $\lambda$ with its Young diagram.
A {\bf reverse semistandard tableau} is a filling of $\lambda$ with positive integers so that labels 
\begin{enumerate}[label=(T\arabic*),ref=(T\arabic*)]
\item \label{item:T1} weakly decrease within rows (from left to right) and 
\item \label{item:T2} strictly decrease (from bottom to top) within columns.
\end{enumerate}  We write ${\tt RSSYT}(\lambda,d)$ for the set of reverse semistandard fillings of $\lambda$ which use labels from the set $[d]$.  
\begin{example}
Let $\lambda=(4,4,2,1)$ and $d=7$.  
\ytableausetup{boxsize=1em, aligntableaux=center}
\[\begin{ytableau}
1\\
3&1\\
4&4&2&2\\
6&6&4&3\\
\end{ytableau}\]
The tableau pictured above is an element of ${\tt RSSYT}(\lambda,7)$. \qed
\end{example}
We define \[{\tt AllPrism}(\boldsymbol \lambda,\mathbf d)={\tt RSSYT}(\lambda^{(1)},d_1)\times \ldots \times {\tt RSSYT}(\lambda^{(k)},d_k).\]  An element of ${\tt AllPrism}(\boldsymbol \lambda,\mathbf d)$ is called a {\bf prism tableau}.

For the discussion which follows, it is not enough to merely think of a prism tableau as a tuple of reverse semistandard tableaux.  Rather, we think of each of the component tableaux as having a position in the $\mathbb Z_{> 0}\times \mathbb Z_{> 0}$ grid.  We use matrix coordinates to refer boxes in  the grid; $(i,j)$ indicates the box in the $i$th row (from the top) and $j$th column (from the left) of the grid.  An {\bf antidiagonal} of $\mathbb Z_{> 0}\times \mathbb Z_{> 0}$ consists of the boxes \[\{(i,1),(i-1,2),\ldots, (1,i)\}.\]

We identify the shape of each $\lambda^{(i)}$ with 
\begin{equation}
\label{eqn:tableauposition}
\lambda^{(i)}=\{(a,b):b\leq \lambda^{(i)}_{d_i-a+1}\}\subseteq \mathbb Z_{>0}\times \mathbb Z_{>0}.
\end{equation}
The {\bf prism shape} for $(\boldsymbol \lambda,\mathbf d)$ is obtained by overlaying the $\lambda^{(i)}$'s: 
\begin{equation}
 \mathbb S(\boldsymbol \lambda,\mathbf d):=\bigcup_{i=1}^k \{(a,b):b\leq \lambda^{(i)}_{d_i-a+1}\}.\end{equation}
From this perspective, a  prism tableau  for $(\boldsymbol \lambda,\mathbf d)$ is a filling of $\mathbb S(\boldsymbol \lambda,\mathbf d)$ which assigns a label of color $i$ from the set $\{1,2,\ldots,d_i\}$ to each $(a,b)\in \lambda^{(i)}$ so that labels of color $i$
 weakly decrease along rows from left to right  and strictly decrease along columns from bottom to top.
Such fillings are  in immediate bijection with ${\tt AllPrism}(\boldsymbol \lambda,\mathbf d)$.
  As such, we freely identify these two representations of a prism tableau.

Weight $\mathcal T$ as follows:
\[{\tt wt}(\mathcal T)=\prod_{i=1}^\infty x_i^{n_i}\] where 
$n_i$ is the number of antidiagonals which contain the label $i$ (in any color).

\begin{example}
\label{example:prism}
Let $\boldsymbol \lambda=((1),(3,2),(2,1,1))$ and $\mathbf d=(2,5,6)$.  Below, we give an example of $\mathcal T\in {\tt AllPrism}(\boldsymbol \lambda,\mathbf d)$.
\ytableausetup{boxsize=.95em, aligntableaux=center}
\[
\mathcal T=\left(
\begin{ytableau}
1
\end{ytableau}
\, , \,
\begin{ytableau}
1&1\\
3&2&2
\end{ytableau}
\, , \, 
\begin{ytableau}
1\\
2\\
6&3
\end{ytableau}
 \right) 
\hspace{2em}
\longleftrightarrow
\hspace{2em}
\begin{tikzpicture}[x=1em,y=1em,baseline=4.2em]
\draw[step=1,gray!30,very thin] (0,1) grid (7,8); 
\node[] at (1.5,5){
\begin{ytableau}
\none\\
{\color{Rhodamine}1}\\
\none\\
{\color{Plum}1}{\color{blue}1}&{\color{Plum}1}\\
{\color{Plum}3}{\color{blue}2}&{\color{Plum}3}&{\color{Plum}2}\\
{\color{blue}6}&{\color{blue}3}\\
\end{ytableau}};
\end{tikzpicture}
\]
The corresponding weight monomial is ${\tt wt}(\mathcal T)=x_1^3x_2^2x_3^3x_6$.
\qed
\end{example}

Let 
\begin{equation}
\label{eqn:deglambdad}
{\tt deg}(\boldsymbol \lambda,\mathbf d)=\min\{{\tt deg}({\tt wt}(\mathcal T)):\mathcal T\in {\tt AllPrism}(\boldsymbol \lambda,\mathbf d)\}.
\end{equation}
$\mathcal T\in {\tt AllPrism}(\boldsymbol \lambda,\mathbf d)$ is {\bf minimal} if ${\tt deg}({\tt wt}(\mathcal T))={\tt deg}(\boldsymbol \lambda,\mathbf d).$
 Let $\ell_c$ be a label $\ell$ of color $c$. 
Labels $\{\ell_c,\ell_d, \ell'_e\}$
in the same antidiagonal form an {\bf unstable triple}
if $\ell<\ell'$ and replacing the $\ell_c$ with $\ell'_c$
gives a prism tableau.
Write
\begin{equation}{\tt Prism}(\boldsymbol \lambda,\mathbf d)=\{\mathcal T\in {\tt AllPrism}(\boldsymbol \lambda,\mathbf d): \mathcal T \text{ is minimal and has no unstable triples}\}.
\end{equation}

We now describe two ways of taking an ASM as a input and producing a pair $(\boldsymbol \lambda,\mathbf d)$ so that $A=A_{\boldsymbol \lambda,\mathbf d}$.  Both procedures are entirely combinatorial.  We start with BiGrassmannian prism tableaux, which were defined in \cite{weigandt2015prism}.

\begin{definition}[BiGrassmannian Prism Tableaux]
Suppose \[{\mathcal Ess}(A)=\{(i_1,j_1),(i_2,j_2),\ldots,(i_k,j_k)\}.\]  Let
\begin{equation}
\label{eqn:betapartition}
\beta^{(\ell)}=(i_\ell-r_A(i_\ell,j_\ell))\times(j_\ell-r_A(i_\ell,j_\ell)).
\end{equation}  Define
 $\boldsymbol \beta_A=(\beta^{(1)},\ldots,\beta^{(k)})$ and $\mathbf b_A=\{i_1,\ldots,i_k\}.$
The {\bf biGrassmannian prism shape} is  $\mathbb S_B(A):=\mathbb S(\boldsymbol \beta_A,\mathbf b_A)$.
Write ${\tt Prism}_B(A):={\tt Prism}(\boldsymbol \beta_A,\mathbf b_A)$.
\end{definition}

\begin{example}
Let $A$ be as in Example~\ref{example:asmDiagram}. Then $\mathcal Ess(A)=\{(1,3),(2,1),(3,2)\}$. 
\begin{center}
\begin{tabular}{|c|c|c|} \hline
$(i_\ell,j_\ell)$ & $r_A(i_\ell,j_\ell)$ & $\beta^{(\ell)}$ \\ \hline
$(1,3)$ & 0  &  $1\times 3$  \\ \hline
$(2,1)$ & 0 &  $2\times 1$ \\ \hline
$(3,2)$ & 1 &  $2\times 1$  \\ \hline
\end{tabular}
\end{center}
Using the table above, we construct the shape $\mathbb S_B(A)$.

\[\begin{tikzpicture}[x=1.3em,y=1.3em,baseline=3.6em]
\draw[step=1,gray!30,very thin] (0,1) grid (4,5); 
\draw[color=black, thick](0,1)rectangle(4,5);
\draw[color=Plum,very thick] (0,5)--(3,5)--(3,4)--(0,4)--(0,5);
\draw[color=Rhodamine, very thick] (0,4.05)--(1,4.05)--(1,2)--(0,2)--(0,4);
\draw[color=blue,very thick ] (0.05,5)--(1.05,5)--(1.05,3)--(0.05,3)--(0.05,5);
     \end{tikzpicture}\]
There are only three prism fillings of $\mathbb S_B(A)$.
\ytableausetup{boxsize=.95em, aligntableaux=top,nobaseline}
\[
\mathcal T_1=
\begin{tikzpicture}[x=1em,y=1em,baseline=2.8em]
\draw[step=1,gray!30,very thin] (0,1) grid (4,5); 
\node[] at (1.5,3.5){
\begin{ytableau}
{\color{Plum}1}{\color{blue}1}&{\color{Plum}1}&{\color{Plum}1}\\
{\color{blue}2}{\color{Rhodamine}2}\\
{\color{Rhodamine}3}\\
\end{ytableau}};
\end{tikzpicture}
\hspace{2em}
\mathcal T_2=
\begin{tikzpicture}[x=1em,y=1em,baseline=2.8em]
\draw[step=1,gray!30,very thin] (0,1) grid (4,5); 
\node[] at (1.5,3.5){
\begin{ytableau}
{\color{Plum}1}{\color{blue}1}&{\color{Plum}1}&{\color{Plum}1}\\
{\color{blue}2}{\color{Rhodamine}1}\\
{\color{Rhodamine}3}\\
\end{ytableau}};
\end{tikzpicture}
\hspace{2em}
\mathcal T_3=
\begin{tikzpicture}[x=1em,y=1em,baseline=2.8em]
\draw[step=1,gray!30,very thin] (0,1) grid (4,5); 
\node[] at (1.5,3.5){
\begin{ytableau}
{\color{Plum}1}{\color{blue}1}&{\color{Plum}1}&{\color{Plum}1}\\
{\color{blue}2}{\color{Rhodamine}1}\\
{\color{Rhodamine}2}\\
\end{ytableau}};
\end{tikzpicture}
\]
The corresponding weight monomials are ${\tt wt}(\mathcal T_1)=x_1^3x_2x_3$, ${\tt wt}(\mathcal T_2)=x_1^3x_2x_3$, and ${\tt wt}(\mathcal T_3)=x_1^3x_2^2$.  These all have the same degree, and so each tableaux is minimal.  $\mathcal T_1$ can be obtained from $\mathcal T_2$ by replacing the pink 1 with a 2.  So $\mathcal T_2$ has an unstable triple.  So we conclude 
\[{\tt Prism}_B(A)=\{T_1,T_3\}.\] 
Then $\mathfrak A_{\boldsymbol \beta_A,\mathbf b_A}=x_1^3x_2x_3+x_1^3x_2^2$.
\qed
\end{example}

We now introduce the parabolic prism model.  Our definition uses the \emph{monotone triangles} of W.~H.~Mills, D.~P.~Robbins, and H.~Rumsey \cite{mills1983alternating}.  
Given \[A=(a_{i j})_{i,j=1}^n\in{\sf ASM}(n)\] let $C_A$ be the matrix of partial column sums, i.e. $C_A(i,j)=\sum_{\ell=1}^i a_{\ell j}$. The $i$th row of $m_A$ records (in increasing order) the positions of the 1s in the $i$th row of $C_A$.  The array $m_A$ is called a {\bf monotone triangle}. 
\begin{example}
\label{example:monotonetriangle}
\[A=
\begin{pmatrix}
0& 0&1& 0\\
1&0&-1&1\\
0&0&1&0\\
0&1&0&0
\end{pmatrix}
\hspace{2em}
C_A=\begin{pmatrix}
0& 0&1& 0\\
1&0&0&1\\
1&0&1&1\\
1&1&1&1
\end{pmatrix}
\hspace{2em}
m_A=
\begin{array}{cccccccccc}
&&&&3\\
&&&1&&4\\
&&1&&3&&4\\
&1&&2&&3&&4
\end{array}
\]
\qed
\end{example}
There is explicit dictionary between monotone triangles and corner sum matrices.  Entry $(i,j)$ of $m_A$ indicates the position of the $j$th ascent in row $i$ of $r_A$, i.e, 
\begin{equation}
\label{eqn:trianglerank}
m_A(i,j)=a \text{ if and only if } r_A(i,a-1)=j-1 \text{ and } r_A(i,a)=j.
\end{equation}

Given $A$ and $1\leq \ell\leq n$, we define 
\begin{equation}
\lambda^{(A,\ell)}=(m_A(\ell,\ell)-\ell,m_A(\ell,\ell-1)-(\ell-1),\ldots, m_A(\ell,1)-1).
\end{equation}
Since $m_A$ strictly increases along rows, $\lambda^{(A,\ell)}$ is a partition.  By construction, \[\lambda^{(A,\ell)}\subseteq \ell\times (n-\ell).\]
Notice if $u\in \mathcal G_n$, then $\lambda^{(u,{\tt des}(u))}=\lambda^{(u)}$.
If $w\in \mathcal S_n$, then $[\lambda^{(A,\ell)},\ell]_g$ is the minimal length coset representative for $w$ in the maximal parabolic subgroup generated by removing $(\ell,\ell+1)$ from the list of simple transpositions in $\mathcal S_n$.

\begin{definition}[Parabolic Prism Tableaux]
 Write \[\{i:(i,j)\in \mathcal Ess(A)\}=\{i_1,\ldots,i_k\}\] for the indices of essential rows of $A$.  Let \[\text{$\boldsymbol \rho_A=(\lambda^{(A,i_1)}, \lambda^{(A,i_2)}, \ldots, \lambda^{(A,i_k)})$ and $\mathbf p_A=(i_1,\ldots,i_k)$}.\]  Then define the {\bf parabolic prism shape} 
\[\mathbb S_P(A)=\mathbb S(\boldsymbol \rho_A, \mathbf p_A).\]
We abbreviate ${\tt Prism}_P(A):={\tt Prism}(\boldsymbol \rho_A,\mathbf p_A)$.
\end{definition}

\begin{example}
Let $A$ be as in Example~\ref{example:asmDiagram}.  Then
\[m_A=
\begin{array}{cccccccccc}
&&&&4\\
&&&2&&4\\
&&1&&3&&4\\
&1&&2&&3&&4
\end{array}.\]
The essential rows are $\mathbf p_A=(1,2,3)$ and $\boldsymbol \rho_A=((3),(2,1),(1,1))$.
\[\begin{tikzpicture}[x=1.3em,y=1.3em,baseline=3.6em]
\draw[step=1,gray!30,very thin] (0,1) grid (4,5); 
\draw[color=black, thick](0,1)rectangle(4,5);
\draw[color=Plum,very thick] (0,5)--(3,5)--(3,4)--(0,4)--(0,5);
\draw[color=blue,very thick ] (0.05,5)--(1.05,5)--(1.05,4)--(2.05,4)--(2.05,3)--(0.05,3)--(0.05,5);
\draw[color=Rhodamine, very thick] (0,4.05)--(1,4.05)--(1,2)--(0,2)--(0,4);
     \end{tikzpicture}\]
We list the possible prism fillings of $(\boldsymbol \rho_A, \mathbf p_A)$.
\ytableausetup{boxsize=.95em, aligntableaux=top}
\[
\mathcal T_1=
\begin{tikzpicture}[x=1em,y=1em,baseline=2.8em]
\draw[step=1,gray!30,very thin] (0,1) grid (4,5); 
\node[] at (1.5,3.5){
\begin{ytableau}
{\color{Plum}1}{\color{blue}1}&{\color{Plum}1}&{\color{Plum}1}\\
{\color{blue}2}{\color{Rhodamine}2}&{\color{blue}2}\\
{\color{Rhodamine}3}\\
\end{ytableau}};
\end{tikzpicture}
\hspace{1em}
\mathcal T_2=
\begin{tikzpicture}[x=1em,y=1em,baseline=2.8em]
\draw[step=1,gray!30,very thin] (0,1) grid (4,5); 
\node[] at (1.5,3.5){
\begin{ytableau}
{\color{Plum}1}{\color{blue}1}&{\color{Plum}1}&{\color{Plum}1}\\
{\color{blue}2}{\color{Rhodamine}1}&{\color{blue}2}\\
{\color{Rhodamine}3}\\
\end{ytableau}};
\end{tikzpicture}
\hspace{1em}
\mathcal T_3=
\begin{tikzpicture}[x=1em,y=1em,baseline=2.8em]
\draw[step=1,gray!30,very thin] (0,1) grid (4,5); 
\node[] at (1.5,3.5){
\begin{ytableau}
{\color{Plum}1}{\color{blue}1}&{\color{Plum}1}&{\color{Plum}1}\\
{\color{blue}2}{\color{Rhodamine}1}&{\color{blue}2}\\
{\color{Rhodamine}2}\\
\end{ytableau}};
\end{tikzpicture}
\hspace{1em}
\mathcal T_4=
\begin{tikzpicture}[x=1em,y=1em,baseline=2.8em]
\draw[step=1,gray!30,very thin] (0,1) grid (4,5); 
\node[] at (1.5,3.5){
\begin{ytableau}
{\color{Plum}1}{\color{blue}1}&{\color{Plum}1}&{\color{Plum}1}\\
{\color{blue}2}{\color{Rhodamine}2}&{\color{blue}1}\\
{\color{Rhodamine}3}\\
\end{ytableau}};
\end{tikzpicture}
\]
\[
\mathcal T_5=
\begin{tikzpicture}[x=1em,y=1em,baseline=2.8em]
\draw[step=1,gray!30,very thin] (0,1) grid (4,5); 
\node[] at (1.5,3.5){
\begin{ytableau}
{\color{Plum}1}{\color{blue}1}&{\color{Plum}1}&{\color{Plum}1}\\
{\color{blue}2}{\color{Rhodamine}1}&{\color{blue}1}\\
{\color{Rhodamine}3}\\
\end{ytableau}};
\end{tikzpicture}
\hspace{1em}
\mathcal T_6=
\begin{tikzpicture}[x=1em,y=1em,baseline=2.8em]
\draw[step=1,gray!30,very thin] (0,1) grid (4,5); 
\node[] at (1.5,3.5){
\begin{ytableau}
{\color{Plum}1}{\color{blue}1}&{\color{Plum}1}&{\color{Plum}1}\\
{\color{blue}2}{\color{Rhodamine}1}&{\color{blue}1}\\
{\color{Rhodamine}2}\\
\end{ytableau}};
\end{tikzpicture}
\]
\begin{center}
\begin{tabular}{|c|c|c|c|c|c|c|} \hline
$i$& 1&2&3&4&5&6\\ \hline
${\tt wt}(\mathcal T_i)$ &
 $x_1^3x_2^2x_3$&
$x_1^3x_2^2x_3$&
$x_1^3x_2^2$&
$x_1^3x_2x_3$&
$x_1^3x_2x_3$&
$x_1^3x_2^2$\\ \hline
minimal& no&no&yes&yes&yes&yes\\ \hline
\end{tabular}
\end{center}
Among the minimal tableaux, $\mathcal T_4$ is obtained by replacing the unstable triple in $\mathcal T_5$.   Likewise, replacing the unstable triple $\mathcal T_6$ produces $\mathcal T_3$.  So ${\tt Prism}_P(A)=\{\mathcal T_3,\mathcal T_4\}$.
Then 
\[
\mathfrak A_{\boldsymbol \rho_A,\mathbf p_A}=x_1^3x_2^2+x_1^3x_2x_3. 
\]
Notice that $\mathfrak A_{\boldsymbol \beta_A,\mathbf b_A}=\mathfrak A_{\boldsymbol \rho_A,\mathbf p_A}$.  This holds in general as a consequence of Theorem~\ref{theorem:main} and the next proposition. \qed
\end{example}

\begin{proposition}
\label{prop:therightasm}
\begin{enumerate}
\item $A=A_{\boldsymbol \beta_A,\mathbf b_A}$.
\item $A=A_{\boldsymbol \rho_A,\mathbf p_A}$.
\end{enumerate}
\end{proposition}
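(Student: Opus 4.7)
The plan is to handle (1) using Lemma~\ref{lemma:biGrassmannianchar} and Proposition~\ref{prop:bigrassList}, and to derive (2) from (1) by sandwiching $A_{\boldsymbol\rho_A,\mathbf p_A}$ between $A_{\boldsymbol\beta_A,\mathbf b_A}$ and $A$ in the ASM lattice.

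For (1), parts (3)-(4) of Lemma~\ref{lemma:biGrassmannianchar} identify each component $[\beta^{(\ell)},i_\ell]_g$ of $\mathbf u_{\boldsymbol\beta_A,\mathbf b_A}$ with the biGrassmannian $[i_\ell,j_\ell,r_A(i_\ell,j_\ell)]_b$, since $\beta^{(\ell)}$ is precisely the rectangle $(i_\ell - r_A(i_\ell,j_\ell))\times(j_\ell - r_A(i_\ell,j_\ell))$ and the descent is at $i_\ell$. Each such biGrassmannian has its unique essential box at $(i_\ell,j_\ell)$ with matching rank $r_A(i_\ell,j_\ell)$, and the explicit block form~(\ref{eqn:biGrDef}) makes its corner sum function maximal subject to this constraint, hence pointwise $\geq r_A$; that is, each biGrassmannian lies $\leq A$ in ASM order. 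Proposition~\ref{prop:bigrassList} (the essential-set characterization of ASMs) then identifies $A$ as the unique, hence the least, ASM above all of these biGrassmannians, giving $A = A_{\boldsymbol\beta_A,\mathbf b_A}$.

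For (2), the plan is to show $A_{\boldsymbol\beta_A,\mathbf b_A} \leq A_{\boldsymbol\rho_A,\mathbf p_A} \leq A$ and combine with (1). The lower bound follows from the componentwise partition containment $\beta^{(\ell)} \subseteq \lambda^{(A,i_\ell)}$: writing $r=r_A(i_\ell,j_\ell)$ and using $\lambda^{(A,i_\ell)}_k = m_A(i_\ell, i_\ell - k + 1) - (i_\ell - k + 1)$, the containment reduces to $m_A(i_\ell, r+s) \geq j_\ell + s$ for $s \geq 1$. This holds because the $(r{+}1)$-st transition of the row-$i_\ell$ corner sum occurs strictly past column $j_\ell$, and subsequent transitions advance at least one column each. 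Hence $[\beta^{(\ell)}, i_\ell]_g \leq [\lambda^{(A,i_\ell)}, i_\ell]_g$ in Bruhat order, and monotonicity of joins gives the claimed inequality.

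The upper bound $A_{\boldsymbol\rho_A,\mathbf p_A} \leq A$ reduces to showing $[\lambda^{(A,\ell)},\ell]_g \leq A$ for any row $\ell$ (applied at each essential row). By construction, the row-$\ell$ corner sum of this Grassmannian agrees with that of $A$. A Grassmannian $u$ with descent at $\ell$ satisfies $r_u(i,j) = \min(i, r_u(\ell,j))$ for $i < \ell$ and $r_u(i,j) = \min(j, r_u(\ell,j) + (i-\ell))$ for $i > \ell$, a direct consequence of the shape $u(1) < \cdots < u(\ell) > u(\ell+1) < \cdots < u(n)$. Combining with the ASM monotonicity bounds $r_A(i,j) \leq \min(i,j)$ and $r_A(i,j) \leq r_A(\ell,j) + (i-\ell)$ for $i > \ell$ yields $r_A \leq r_u$ pointwise, so $u \leq A$. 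The main technical point is this corner-sum comparison; the partition containment in the lower bound is a similar but lighter calculation rooted in the same monotone-triangle / corner-sum dictionary. With both inequalities in hand, the sandwich delivers $A = A_{\boldsymbol\rho_A,\mathbf p_A}$.
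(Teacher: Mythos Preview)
Your argument is correct and follows the same overall architecture as the paper: part~(1) via Proposition~\ref{prop:bigrassList} and the identification $[\beta^{(\ell)},i_\ell]_g=[i_\ell,j_\ell,r_A(i_\ell,j_\ell)]_b$, and part~(2) via the sandwich $A=A_{\boldsymbol\beta_A,\mathbf b_A}\leq A_{\boldsymbol\rho_A,\mathbf p_A}\leq A$.

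The differences lie in how you justify the two inequalities of the sandwich. For the upper bound $[\lambda^{(A,\ell)},\ell]_g\leq A$, the paper invokes Lemma~\ref{lemma:grassproject} (which packages exactly this comparison via Lemma~\ref{lemma:bigrassranktest}), whereas you compute the full corner sum of a Grassmannian explicitly and compare entrywise. Your formula $r_u(i,j)=\min(i,r_u(\ell,j))$ for $i\leq\ell$ and $r_u(i,j)=\min(j,r_u(\ell,j)+(i-\ell))$ for $i>\ell$ is correct and gives a self-contained argument, at the cost of reproving what Lemma~\ref{lemma:grassproject} already provides. For the lower bound, the paper again uses Lemma~\ref{lemma:bigrassranktest}: since $r_{[\lambda^{(A,i)},i]_g}(i,j)=r_A(i,j)$, each $[i,j,r_A(i,j)]_b$ lies below the corresponding parabolic Grassmannian. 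You instead establish the partition containment $\beta^{(\ell)}\subseteq\lambda^{(A,i_\ell)}$ directly from the monotone-triangle dictionary and appeal to the equivalence of partition containment with Bruhat order on Grassmannians of fixed descent. That equivalence is standard (and remarked on just after the paper's proof), but note it is not formally established beforehand in the paper; if you want to be fully self-contained you should either cite it externally or observe that it too follows from Lemma~\ref{lemma:bigrassranktest}. In short: same strategy, with the paper leaning on its prepared lemmas and your version trading those citations for explicit corner-sum and monotone-triangle computations.
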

We will postpone the proof to Section~\ref{subsection:corner}.

\section{The Lattice of ASMs}
\label{section:asmlattice}

\subsection{Preliminaries on posets and lattices}

  We follow  \cite{lascoux1996treillis} and \cite{reading2002order} as references.  
  A {\bf partially ordered set} (or poset) is a set $\mathcal P$ equipped with a binary relation $\leq$ which satisfies the axioms of \emph{reflexivity}, \emph{antisymmetry}, and \emph{transitivity}.         
 If $a\leq b$ and $a\neq b$ we write $a<b$.
Given $a,b\in \mathcal P$ we say $b$ {\bf covers} $a$ if $a< b$ and whenever $a\leq c\leq b$, we have $c=a$ or $c=b$.
 
An element $a\in \mathcal P$ is {\bf minimal} in $\mathcal P$ if whenever $b\in \mathcal P$ so that $b\leq a$ we have $a=b$.  
Similarly, $a\in \mathcal P$ is {\bf maximal} in $\mathcal P$ if whenever $b\in \mathcal P$ so that $b\geq a$ we have $a=b$.  Write ${\tt MIN}(\mathcal P)$ for the set of minimal elements in $\mathcal P$ and  ${\tt MAX}(\mathcal P)$ for the maximal elements.

The {\bf join} of $\mathcal S\subseteq\mathcal P$ (when it exists) is the least upper bound of $\mathcal S$.  Similarly, the {\bf meet} is the greatest lower bound. The join and meet are denoted $\vee $ and $\wedge $ respectively.

An element $a\in \mathcal P$ is {\bf basic} if $a\neq \vee \mathcal S$ whenever $a\not \in \mathcal S$.  The set of basic elements in $\mathcal P$ is called the {\bf base} of $\mathcal P$.  Let $\mathbb P(\mathcal S)$ denote the {\bf power set} of $\mathcal S$, that is the set of all subsets of $\mathcal S$.  $\mathbb P(\mathcal S)$ has the natural structure of a poset by inclusion of sets.  Given any subset $\mathcal C\subseteq \mathcal P$, define $\pi_{\mathcal C}:\mathcal P\rightarrow \mathbb P(\mathcal C)$ by $\pi_{\mathcal C}(a)=\{c\in \mathcal C:c\leq a\}$.
The base is characterized by the following property.
\begin{proposition}[{\cite[Proposition 2.4]{lascoux1996treillis}}]
\label{prop:base}
Let $\mathcal B$ be the base of a finite poset $\mathcal P$.  The projection $\pi_{\mathcal B}$ is an order isomorphism onto its image.  Furthermore, if any $\mathcal C\subseteq \mathcal P$ has this property, then $\mathcal B\subseteq\mathcal  C$.
\end{proposition}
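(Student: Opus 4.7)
The plan is to reduce both halves of the proposition to a single key lemma: for every $a \in \mathcal{P}$, the join $\vee \pi_{\mathcal{B}}(a)$ exists in $\mathcal{P}$ and equals $a$. I would prove this by strong induction on the height of $a$. A minimal element is vacuously basic (nothing strictly below it can participate in a join), and $\pi_{\mathcal{B}}(a) = \{a\}$. For the inductive step, if $a$ is basic then $a \in \pi_{\mathcal{B}}(a)$ and the claim is immediate. Otherwise, write $a = \vee \mathcal{S}$ with $a \notin \mathcal{S}$, so each $s \in \mathcal{S}$ has $s < a$. By induction, $s = \vee \pi_{\mathcal{B}}(s)$, and since $\pi_{\mathcal{B}}(s) \subseteq \pi_{\mathcal{B}}(a)$, any upper bound of $\pi_{\mathcal{B}}(a)$ must bound each $\pi_{\mathcal{B}}(s)$, hence bound $s$, hence bound $\vee \mathcal{S} = a$. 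Since $a$ is itself an upper bound of $\pi_{\mathcal{B}}(a)$, it is the least one.

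Once this lemma is in hand, the order-isomorphism assertion is nearly free: $\pi_{\mathcal{B}}$ is tautologically order-preserving, and if $\pi_{\mathcal{B}}(a) \subseteq \pi_{\mathcal{B}}(a')$, then $a'$ is an upper bound of $\pi_{\mathcal{B}}(a)$, so $a = \vee \pi_{\mathcal{B}}(a) \leq a'$. For the minimality statement, I would take any $\mathcal{C} \subseteq \mathcal{P}$ whose projection $\pi_{\mathcal{C}}$ is an order isomorphism onto its image and prove the parallel identity $a = \vee \pi_{\mathcal{C}}(a)$ for every $a$. The argument is the same pattern: $a$ is evidently an upper bound of $\pi_{\mathcal{C}}(a)$, and any other upper bound $a'$ satisfies $\pi_{\mathcal{C}}(a) \subseteq \pi_{\mathcal{C}}(a')$, hence $a \leq a'$ by the assumed order isomorphism. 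Then if some $b \in \mathcal{B}$ were absent from $\mathcal{C}$, we would have $b = \vee \pi_{\mathcal{C}}(b)$ with $b \notin \pi_{\mathcal{C}}(b)$, realizing $b$ as a join of a set not containing it and contradicting basic-ness.

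The main obstacle is ensuring the lemma goes through despite $\mathcal{P}$ not being assumed to be a lattice, so that the joins in question need not \emph{a priori} exist. The inductive argument bypasses this cleanly: the non-basic case begins with an explicit expression $a = \vee \mathcal{S}$ whose existence is guaranteed by the definition of non-basic, and the upper-bound comparison above promotes it to $\vee \pi_{\mathcal{B}}(a) = a$ without ever invoking a join not already at hand. Finiteness of $\mathcal{P}$ is what makes the well-founded induction terminate.
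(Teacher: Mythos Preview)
The paper does not supply its own proof of this proposition; it is quoted from \cite{lascoux1996treillis} and used as a black box, so there is no in-paper argument to compare against.

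Your argument is essentially correct and self-contained. One small wrinkle in the base case: a minimal element need not be basic under the paper's definition, because if $\mathcal{P}$ has a minimum $\hat 0$ then $\hat 0 = \vee\emptyset$ with $\hat 0 \notin \emptyset$, so $\hat 0$ fails to be basic. This does not damage the lemma, though: in that situation $\pi_{\mathcal B}(\hat 0)=\emptyset$ and $\vee\emptyset=\hat 0$, so $\hat 0 = \vee\pi_{\mathcal B}(\hat 0)$ still holds. Any other minimal element is indeed basic (the empty join, if it exists, is the global minimum), and your induction step is sound as written: the existence of $\vee\mathcal S$ is given by the non-basic hypothesis, and you correctly promote it to $\vee\pi_{\mathcal B}(a)$ by comparing upper bounds rather than by invoking a lattice axiom. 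The deductions of the order-embedding property and of $\mathcal B\subseteq\mathcal C$ from the parallel identity $a=\vee\pi_{\mathcal C}(a)$ are clean.
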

As a consequence, any element  $a\in\mathcal P$ is uniquely encoded by the set $\pi_{\mathcal B}(a)$.  Furthermore, $a=\vee\pi_{\mathcal B}(a)$ (see \cite[Proposition 9]{reading2002order}).  In particular, $a=\vee{\tt MAX}(\pi_{\mathcal B}(a))$.

 A {\bf lattice} $\mathcal L$ is a poset in which every pair of elements has a join and a meet.  Basic elements in a lattice are also known as {\bf join-irreducibles} and have the characterization that they cover a unique element.  
 A {\bf sublattice} of $\mathcal L$ is a subset $\mathcal L'\subseteq \mathcal L$ which is itself a lattice and has the \emph{same} operations of join and meet as $\mathcal L$.

 Assume $\mathcal S$ is a totally ordered set and $I$ some indexing set.   Let 
\[\mathcal S^I:=\{(a_i)_{i\in I}:a_i\in \mathcal S \text{ for all } i\in I\}.\]  
There is a natural partial order on $\mathcal S^I$ by entrywise comparison.  Explicitly, if $\mathbf a=(a_i)_{i\in I}$ and $\mathbf b=(b_i)_{i\in I}$ in $\mathcal S^I$, then 
\[\mathbf a\leq  \mathbf b \text{ if and only if } a_i\leq b_i \text{ for all } i\in I.\]  Write
\[\boldsymbol \max(\mathbf a,\mathbf b):=(\max(a_i,b_i))_{i\in I} \quad \text{ and } \quad \boldsymbol\min(\mathbf a,\mathbf b):=(\min(a_i,b_i))_{i\in I}.\]
\begin{lemma}
\label{lemma:latticesituation}
\begin{enumerate}
\item \label{item:latticesituation1} $\mathcal S^I$ is a lattice with $\mathbf a\vee \mathbf b=\boldsymbol\max(\mathbf a,\mathbf b)$ and $\mathbf a\wedge \mathbf b=\boldsymbol\min(\mathbf a,\mathbf b)$.
\item  \label{item:latticesituation2} If a subset of $\mathcal S^I$ is closed under joins and meets, then it is a sublattice of $\mathcal S^I$ (and hence is itself a lattice).
\end{enumerate}
\end{lemma}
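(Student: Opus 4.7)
The plan is to verify directly that $\boldsymbol\max$ and $\boldsymbol\min$ realize the least upper and greatest lower bounds under the coordinatewise partial order, and then to observe that the sublattice statement is an immediate consequence of the universal property of these operations.

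For part~\eqref{item:latticesituation1}, I would first check that $\boldsymbol\max(\mathbf a,\mathbf b)$ is an upper bound: since $\max(a_i,b_i)\geq a_i$ and $\max(a_i,b_i)\geq b_i$ hold in the totally ordered set $\mathcal S$ for each $i\in I$, the definition of the entrywise order gives $\boldsymbol\max(\mathbf a,\mathbf b)\geq \mathbf a$ and $\boldsymbol\max(\mathbf a,\mathbf b)\geq \mathbf b$. Next I would verify that it is the least such: if $\mathbf c=(c_i)_{i\in I}\in \mathcal S^I$ satisfies $\mathbf c\geq \mathbf a$ and $\mathbf c\geq \mathbf b$, then $c_i\geq a_i$ and $c_i\geq b_i$ for every $i$, so $c_i\geq \max(a_i,b_i)$, and hence $\mathbf c\geq \boldsymbol\max(\mathbf a,\mathbf b)$. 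The corresponding assertion for $\boldsymbol\min$ is dual; formally one applies the same argument to the opposite total order on $\mathcal S$, which reverses the coordinatewise order on $\mathcal S^I$ and swaps $\boldsymbol\max$ with $\boldsymbol\min$.

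For part~\eqref{item:latticesituation2}, suppose $\mathcal L'\subseteq \mathcal S^I$ is closed under $\boldsymbol\max$ and $\boldsymbol\min$. Equip $\mathcal L'$ with the inherited order from $\mathcal S^I$. Given $\mathbf a,\mathbf b\in \mathcal L'$, the element $\boldsymbol\max(\mathbf a,\mathbf b)$ belongs to $\mathcal L'$ by hypothesis, and by part~\eqref{item:latticesituation1} it is an upper bound of $\{\mathbf a,\mathbf b\}$ in $\mathcal S^I$, hence in $\mathcal L'$. Moreover, any $\mathbf c\in \mathcal L'$ with $\mathbf c\geq \mathbf a$ and $\mathbf c\geq \mathbf b$ is, in particular, an upper bound in $\mathcal S^I$, so $\mathbf c\geq \boldsymbol\max(\mathbf a,\mathbf b)$; thus $\boldsymbol\max(\mathbf a,\mathbf b)$ is the join of $\mathbf a$ and $\mathbf b$ inside $\mathcal L'$. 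The same reasoning with $\boldsymbol\min$ provides meets, so $\mathcal L'$ is a lattice with join and meet inherited from $\mathcal S^I$, i.e.\ a sublattice.

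There is no genuine obstacle: the entire argument amounts to unwinding the definitions of the coordinatewise order and of join/meet. The only point that deserves explicit mention, and which prevents the statement from being completely trivial, is the observation used in part~\eqref{item:latticesituation2} that an upper (resp.\ lower) bound computed in $\mathcal S^I$ continues to serve as the least upper (resp.\ greatest lower) bound inside any subset containing it, so closure under $\boldsymbol\max$ and $\boldsymbol\min$ is sufficient to guarantee a sublattice structure.
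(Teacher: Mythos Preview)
Your proof is correct and follows the same approach as the paper, which simply remarks that part~(1) is immediate since $\mathcal S^I$ is a cartesian product of lattices and that part~(2) follows from the definition of a sublattice. You have spelled out in detail exactly the verifications that the paper leaves implicit.
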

Since $\mathcal S^I$ is a cartesian product of lattices, (\ref{item:latticesituation1}) is immediate.  Likewise, (\ref{item:latticesituation2}) follows from the definition of a sublattice.

A lattice is {\bf complete} if every subset has a join and meet.  Any finite lattice is automatically complete. The {\bf Dedekind-MacNeille} completion of $\mathcal P$ is the smallest complete lattice which contains $\mathcal P$ as an order embedding.  Any finite poset has the same base as its  Dedekind-MacNeille  completion \cite[Proposition 28]{reading2002order}.

\subsection{The  Dedekind-MacNeille completion of the symmetric group}

Write \[{\sf R}(n):=\{r_A:A\in{\sf ASM}(n)\}.\]    For convenience, define $r_A(i,j)=0$ whenever $i=0$ or $j=0$.   Then
\begin{equation}
a_{ij}=r_A(i,j)-r_A(i,j-1)-r_A(i-1,j)+r_A(i-1,j-1)
\end{equation}
 recovers the $(i,j)$ entry of $A$  \cite{robbins1986determinants}.   As such, the map $A\mapsto r_A$ defines a bijection between ${\sf ASM}(n)$ and ${\sf R}(n)$.
 The following lemma characterizes corner sums of ASMs.
\begin{lemma}[{\cite[Lemma 1]{robbins1986determinants}}]
Let $A$ be an $n\times n$ matrix.  Then $A\in {\sf ASM}(n)$ if and only if
\begin{enumerate}[label=(R\arabic*),ref=(R\arabic*)]
\item \label{item:R1} $r_A(i,n)=r_A(n,i)=i$ for all $i=1,\ldots,n$.
\item \label{item:R2} $r_A(i,j)-r_A(i-1,j) \text{ and } r_A(i,j)-r_A(i,j-1)\in \{0,1\}$ for all $1\leq i,j\leq n$.
\end{enumerate}
\end{lemma}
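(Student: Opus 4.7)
The plan is to treat (R1)--(R2) as a direct translation of the defining properties (A1)--(A2) of an ASM under the bijection $A \mapsto r_A$. The workhorse is the pair of telescoping identities
\[
r_A(i,j) - r_A(i-1,j) = \sum_{\ell=1}^{j} a_{i\ell}, \qquad r_A(i,j) - r_A(i,j-1) = \sum_{k=1}^{i} a_{kj},
\]
together with the inclusion--exclusion formula $a_{ij} = r_A(i,j) - r_A(i,j-1) - r_A(i-1,j) + r_A(i-1,j-1)$ already noted in the excerpt. Since each of (R1)--(R2) and (A1)--(A2) is symmetric under transposition, it will suffice to analyze rows and then invoke symmetry for columns.

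For the forward direction, assume $A \in {\sf ASM}(n)$. Condition (R1) is immediate from (A2): summing all entries in the first $i$ rows gives $r_A(i,n) = i$, and likewise for $r_A(n,i)$. For (R2), consider the partial row sum $S_i(j) := \sum_{\ell=1}^{j} a_{i\ell} = r_A(i,j) - r_A(i-1,j)$. By (A1) the nonzero entries of row $i$ alternate in sign, and by (A2) they sum to $1$, which forces the first nonzero entry to be $+1$. One then checks by induction on $j$ that $S_i(j) \in \{0,1\}$: the sequence $S_i$ starts at $0$, changes by $\pm 1$ only at the nonzero entries, and its sign pattern prevents it from ever dropping below $0$ or climbing above $1$.

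For the reverse direction, assume (R1)--(R2). Fix row $i$ and set $\alpha_j := r_A(i,j) - r_A(i-1,j)$, so $\alpha_j \in \{0,1\}$ by (R2), $\alpha_0 = 0$, and $\alpha_n = 1$ by (R1) (using the convention that $r_A$ vanishes on the zeroth row/column). The second telescoping identity gives
\[
a_{ij} = \alpha_j - \alpha_{j-1} \in \{-1,0,1\},
\]
and $\sum_{\ell=1}^n a_{i\ell} = \alpha_n - \alpha_0 = 1$, establishing the row half of (A2). For (A1), note that since $\alpha$ is $\{0,1\}$-valued, a nonzero $a_{ij}$ equals $+1$ precisely when $\alpha$ jumps from $0$ to $1$ at step $j$, and $-1$ when it drops from $1$ to $0$; consecutive nonzero entries thus alternate in sign, with the first being $+1$. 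The same argument applied to columns, using $r_A(i,j) - r_A(i,j-1)$, finishes the proof.

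The proof is essentially bookkeeping, and the only point requiring care is the equivalence between the combinatorial statement \emph{``the nonzero entries of a $\{-1,0,1\}$-vector alternate in sign with total sum $1$''} and the analytic statement \emph{``its partial sums take values only in $\{0,1\}$ and end at $1$''}. I would isolate this as a one-line lemma about $\{0,1\}$-valued sequences and apply it symmetrically to rows and columns; this is the step that, while elementary, is easiest to gloss over incorrectly.
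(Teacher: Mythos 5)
Your proof is correct. Note that the paper does not actually prove this lemma --- it is quoted directly from Robbins--Rumsey \cite[Lemma 1]{robbins1986determinants} with no argument given --- so there is nothing to compare against; your telescoping argument, reducing both directions to the observation that a $\{-1,0,1\}$-row with alternating nonzero entries summing to $1$ is exactly a row whose partial sums lie in $\{0,1\}$ and end at $1$, is the standard proof and is complete. The one step you flag as needing care (that (A1) and (A2) together force the first nonzero entry of each row and column to be $+1$, since an alternating sequence starting with $-1$ sums to $0$ or $-1$) is handled correctly.
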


\begin{lemma}
\label{lemma:rankbruhat}
${\sf R}(n)$ is a distributive lattice with  join and  meet given by $r_A\vee r_B=\boldsymbol\max(r_A,r_B)$ and    $r_A\wedge r_B=\boldsymbol\min(r_A,r_B)$, respectively.
\end{lemma}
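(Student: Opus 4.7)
The plan is to apply Lemma~\ref{lemma:latticesituation}(\ref{item:latticesituation2}) with $\mathcal S=\mathbb Z$ and $I=[n]\times [n]$, viewing each corner sum function $r_A$ as an element of $\mathbb Z^{[n]\times [n]}$. Since the ambient lattice $\mathbb Z^{[n]\times[n]}$ is a product of chains, it is a distributive lattice with entrywise max and min as join and meet. Once I verify that ${\sf R}(n)\subseteq\mathbb Z^{[n]\times[n]}$ is closed under $\boldsymbol\max$ and $\boldsymbol\min$, the lemma will follow, since any sublattice of a distributive lattice is distributive.

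The substantive step is therefore to check that, given $r_A,r_B\in {\sf R}(n)$, the functions $r=\boldsymbol\max(r_A,r_B)$ and $r'=\boldsymbol\min(r_A,r_B)$ lie in ${\sf R}(n)$. By the preceding characterization of corner sums, it suffices to check \ref{item:R1} and \ref{item:R2}. Condition \ref{item:R1} is immediate since $r_A(i,n)=r_B(i,n)=i$ and similarly for the bottom row, so the max and min both equal $i$. For \ref{item:R2}, I would do a short case analysis: fix $(i,j)$ and assume without loss of generality that $r_A(i,j)\geq r_B(i,j)$, so $r(i,j)=r_A(i,j)$. If $r_A(i-1,j)\geq r_B(i-1,j)$, then $r(i,j)-r(i-1,j)=r_A(i,j)-r_A(i-1,j)\in\{0,1\}$. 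Otherwise $r_B(i-1,j)>r_A(i-1,j)$; combining $r_B(i-1,j)\geq r_A(i-1,j)+1\geq r_A(i,j)-1+1=r_A(i,j)$ (using the unit step condition on $r_A$) with $r_B(i-1,j)\leq r_B(i,j)\leq r_A(i,j)$, one gets $r_B(i-1,j)=r_A(i,j)$, so $r(i,j)-r(i-1,j)=0$. The vertical increment is handled identically, and the argument for $r'=\boldsymbol\min(r_A,r_B)$ is symmetric.

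Thus ${\sf R}(n)$ is closed under $\boldsymbol\max$ and $\boldsymbol\min$, hence is a sublattice of $\mathbb Z^{[n]\times[n]}$ with the stated join and meet. Distributivity is then inherited from the ambient product of chains.

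I expect no serious obstacle beyond the small case analysis in \ref{item:R2}; the main conceptual point is simply to recognize that the corner sum encoding transports the comparison and the lattice operations directly from $\mathbb Z^{[n]\times[n]}$ to ${\sf R}(n)$.
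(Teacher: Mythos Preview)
Your proposal is correct and follows essentially the same approach as the paper, which simply states that the lemma follows from Lemma~\ref{lemma:latticesituation} by verifying that \ref{item:R1} and \ref{item:R2} are preserved under entrywise maxima and minima. You have filled in the case analysis for \ref{item:R2} and made explicit the distributivity argument (inherited from the ambient product of chains), both of which the paper leaves to the reader.
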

Lemma~\ref{lemma:rankbruhat} follows from Lemma~\ref{lemma:latticesituation} by verifying that \ref{item:R1} and \ref{item:R2} are preserved under taking minimums and maximums.  The lattice of ASMs was initially studied by N.~Elkies, G.~Kuperberg, M.~Larsen, and J.~Propp  \cite{elkies1992alternating}.  The definition in \emph{ibid} is in terms of height functions, which are in  obvious order reversing bijection with corner sum matrices. 
The order on ${\sf ASM}(n)$ can also be defined using monotone triangles; this perspective was used in \cite{lascoux1996treillis}.
\begin{lemma}[{\cite[Lemma 5.4]{lascoux1996treillis}}]
\label{lemma:symgroupbase}
The  Dedekind-MacNeille completion of $\mathcal S_n$ is isomorphic to ${\sf ASM}(n)$. 
The base of the $\mathcal S_n$, and hence ${\sf ASM}(n)$, is $\mathcal B_n$.
\end{lemma}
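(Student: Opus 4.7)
The plan is to first identify the base of ${\sf ASM}(n)$ as exactly $\mathcal{B}_n$ using Proposition~\ref{prop:therightasm} together with the base characterization in Proposition~\ref{prop:base}, and then to deduce the Dedekind-MacNeille completion statement via the cited fact \cite[Prop.~28]{reading2002order} that a finite poset shares its base with its DM completion.

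For the inclusion ``base of ${\sf ASM}(n)$'' $\subseteq \mathcal{B}_n$, Proposition~\ref{prop:therightasm} gives $A = \vee\, \mathbf{u}_{\boldsymbol\beta_A, \mathbf b_A}$ for every $A \in {\sf ASM}(n)$, expressing every ASM as a join of biGrassmannians.  Consequently the projection $\pi_{\mathcal{B}_n}: {\sf ASM}(n) \to \mathbb{P}(\mathcal{B}_n)$ is injective (two ASMs with identical image admit a common join expression, hence coincide), and it is automatically order-preserving, so it is an order isomorphism onto its image.  By Proposition~\ref{prop:base}, the base is contained in $\mathcal{B}_n$.

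For the reverse inclusion, I show each $u = [i,j,r]_b$ is basic.  The key structural fact, verifiable directly from the block form~(\ref{eqn:biGrDef}), is that $u$ is the minimum ASM (in the ASM-order) satisfying $r_u(i,j) \leq r$; equivalently, $r_u$ is the componentwise maximum of all valid ASM rank functions subject to that single constraint at $(i,j)$.  Now suppose $u = \vee \mathcal{S}$ with $u \notin \mathcal{S}$.  Under the order-reversing bijection $A \mapsto r_A$, joins in ${\sf ASM}(n)$ correspond to componentwise minima of rank functions (Lemma~\ref{lemma:rankbruhat}), so $\min_{s \in \mathcal{S}} r_s(i,j) = r_u(i,j) = r$.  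Pick $s^* \in \mathcal{S}$ realizing this minimum, so that $r_{s^*}(i,j) = r$.  By the minimality property $s^* \geq u$ in ASM-order; combined with $s^* \leq u$ (since $s^* \in \mathcal{S}$ and $u$ is the join), this forces $s^* = u$, contradicting $u \notin \mathcal{S}$.  Hence $u$ is basic, so the base of ${\sf ASM}(n)$ equals $\mathcal{B}_n$.

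To conclude the DM completion statement, observe that ${\sf ASM}(n)$ is a finite complete lattice containing $\mathcal{S}_n$ as an order-embedded subposet.  The universal property of the DM completion identifies $\mathcal{DM}(\mathcal{S}_n)$ with any such lattice $L$ in which $\mathcal{S}_n$ is both join-dense (every $\ell \in L$ is a join of elements of $\mathcal{S}_n$ below it) and meet-dense (every $\ell \in L$ is a meet of elements of $\mathcal{S}_n$ above it).  Join-density is immediate from Proposition~\ref{prop:therightasm} since biGrassmannians lie in $\mathcal{S}_n$.  The \emph{main obstacle} is meet-density: one must show $A = \wedge \{w \in \mathcal{S}_n : w \geq A\}$ for every ASM $A$.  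I would handle this by constructing a dual biGrassmannian decomposition via an order-reversing involution of ${\sf ASM}(n)$ that stabilizes $\mathcal{S}_n$ (e.g.\ a combination of row- and column-reversal that interchanges joins and meets and preserves $\mathcal{B}_n$), thereby transporting the join-density of Proposition~\ref{prop:therightasm} into meet-density, or by appealing to the direct argument of \cite[Lemma 5.4]{lascoux1996treillis}.  With meet-density established, ${\sf ASM}(n) \cong \mathcal{DM}(\mathcal{S}_n)$, and by \cite[Prop.~28]{reading2002order} the base of $\mathcal{S}_n$ is also $\mathcal{B}_n$.
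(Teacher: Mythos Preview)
The paper does not prove this lemma at all; it is quoted from \cite{lascoux1996treillis}. So there is no ``paper's own proof'' to compare against. That said, your argument as written cannot stand inside this paper because of a circularity.

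Your first step invokes Proposition~\ref{prop:therightasm} to write every $A\in{\sf ASM}(n)$ as a join of biGrassmannians. But look at how the paper proves Proposition~\ref{prop:therightasm} (in Section~\ref{subsection:corner}): the very first line of part~(1) appeals to equation~(\ref{eqn:bigrjoin}), namely $A=\vee\,{\tt biGr}(A)$, and the paper derives (\ref{eqn:bigrjoin}) \emph{from} Lemma~\ref{lemma:symgroupbase} together with Proposition~\ref{prop:base}. So in the paper's logical order, Proposition~\ref{prop:therightasm} is downstream of the statement you are trying to prove, and citing it here is circular.

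The fix is easy: replace your appeal to Proposition~\ref{prop:therightasm} by Lemma~\ref{lemma:bigrallentries}(2), which says $A=\vee\{[i,j,r_A(i,j)]_b:1\le i,j\le n\}$ and whose proof uses only Lemma~\ref{lemma:bigrassranktest} (itself cited externally and independent of Lemma~\ref{lemma:symgroupbase}). That gives you the join-density of $\mathcal B_n$ in ${\sf ASM}(n)$ without circularity, and the rest of your base argument (including the nice minimality argument that each $[i,j,r]_b$ is join-irreducible) then goes through.

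Your treatment of the Dedekind--MacNeille part is honest but incomplete: you correctly identify meet-density of $\mathcal S_n$ in ${\sf ASM}(n)$ as the missing ingredient and gesture at an order-reversing involution or the original reference. The involution idea is sound (e.g.\ $A\mapsto w_0Aw_0$ for $w_0$ the longest permutation reverses the order on ${\sf ASM}(n)$, preserves $\mathcal S_n$, and sends $\mathcal B_n$ to itself), but you have not actually carried it out. As written this portion is a sketch, not a proof.
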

   In \cite{lascoux1996treillis}, A.~Lascoux and M.~P.~Sch\"utzenberger also give the base for type $B$ Coxeter groups.  M.~Geck and S.~Kim determined the base for all finite Coxeter groups \cite{geck1997bases}.

 Let
\begin{equation}
\label{eqn:bigrass}
{\tt biGr}(A)={\tt MAX}( \pi_{\mathcal B_n}(A))
\end{equation}
be the maximal biGrassmannians in $\pi_{\mathcal B_n}(A)$.
Then as a consequence of Lemma~\ref{lemma:symgroupbase} and Proposition~\ref{prop:base}
\begin{equation}
\label{eqn:bigrjoin}
A=\vee \pi_{\mathcal B_n}(A)=\vee{\tt biGr}(A).
\end{equation}
Notice that (\ref{eqn:bigrjoin}) determines a pair $(\boldsymbol \lambda,\mathbf d)$ so that $A=A_{\boldsymbol \lambda,\mathbf d}$.  If ${\tt biGr}(A)=\{u_1,\ldots,u_k\}$ then setting $\lambda^{(i)}=\lambda^{(u_i)}$ and $d_i={\tt des}(u_i)$ produces the desired prism shape.

We also define
\begin{equation}
\label{eqn:PermA}
{\tt Perm}(A):={\tt MIN}(\{w\in \mathcal S_n:w\geq A\})\end{equation}
and 
\begin{equation}
\label{example:permminperm}
{\tt MinPerm}(A):=\{w\in {\tt Perm}(A):\ell(w)={\tt deg}(A)\}.
\end{equation}
\begin{example}
\label{example:noneqi}
Let $A$ be the ASM whose diagram is pictured below.
\[
\begin{tikzpicture}[x=1.3em,y=1.3em]
     \draw[step=1,gray!30,very thin](0,1) grid (4,5);
     \draw[color=black, thick](0,1)rectangle(4,5);
     \filldraw[color=black, fill=gray!30, thick](0,4)rectangle(1,5);
     \filldraw[color=black, fill=gray, thick](1,4)rectangle(2,5);
     \filldraw[color=black, fill=gray, thick](2,3)rectangle(3,4);
     \filldraw [black](2.5,4.5)circle(.1);
     \filldraw [black](.5,3.5)circle(.1);
     \filldraw [color=black,fill=white,thick](2.5,3.5)circle(.1);
     \filldraw [black](3.5,3.5)circle(.1);
     \filldraw [black](1.5,2.5)circle(.1);
     \filldraw [black](2.5,1.5)circle(.1);
     \draw[thick] (2.5,4.5)--(2.5,4);
     \draw[thick] (3,4.5)--(2.5,4.5);
     \draw[thick] (.5,3.5)--(.5,3);
     \draw[thick] (1,3.5)--(.5,3.5);
     \draw[thick] (3.5,3.5)--(3.5,3);
     \draw[thick] (4,3.5)--(3.5,3.5);
     \draw[thick] (1.5,2.5)--(1.5,2);
     \draw[thick] (2,2.5)--(1.5,2.5);
     \draw[thick] (2.5,1.5)--(2.5,1);
     \draw[thick] (3,1.5)--(2.5,1.5);
     \draw[thick] (4,4.5)--(3,4.5);
     \draw[thick] (2,3.5)--(1,3.5);
     \draw[thick] (3,2.5)--(2,2.5);
     \draw[thick] (4,2.5)--(3,2.5);
     \draw[thick] (4,1.5)--(3,1.5);
     \draw[thick] (.5,3)--(.5,2);
     \draw[thick] (3.5,3)--(3.5,2);
     \draw[thick] (.5,2)--(.5,1);
     \draw[thick] (1.5,2)--(1.5,1);
     \draw[thick] (3.5,2)--(3.5,1);
     \end{tikzpicture}
\]
By direct verification, we may compute ${\tt Perm}(A)=\{3412,4123\}$.   Since $\ell(3412)=4$ and $\ell(4123)=3$, we have ${\tt MinPerm}(A)=\{4123\}$. \qed
\end{example}

\subsection{Corner sums and biGrassmannians}
\label{subsection:corner}
In this section, we discuss the specific connection of ${\tt biGr}(A)$ to $\mathcal Ess(A)$.  Furthermore, we review known facts about biGrassmannian permutations and the Bruhat order.  We then use this to prove Proposition~\ref{prop:therightasm}.

The definition of $\mathcal Ess(A)$ generalizes W.~Fulton's definition of the essential set of a permutation matrix.  However, there is another characterization in terms of corner sum matrices.  This is taken as the definition elsewhere in the literature, for example see \cite{fortin2008macneille} or \cite{kobayashi2013more}.  We prove these definitions are equivalent.
\begin{lemma}
\label{lemma:essentialrankchar}
$\mathcal Ess(A)=\{(i,j):r_A(i,j)=r_A(i-1,j)=r_A(i,j-1) \text{ and } r_A(i,j)+1=r_A(i+1,j)=r_A(i,j+1)\}$.
\end{lemma}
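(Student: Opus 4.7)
The plan is to translate the definition of $D(A)$ into corner-sum language using (\ref{eqn:inversion2}) and then read off the essential set condition directly.

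First I would observe that by (\ref{eqn:inversion2}), $(i,j)$ is an inversion of $A$ if and only if both factors $\bigl(1-\sum_{k=1}^i a_{kj}\bigr)$ and $\bigl(1-\sum_{l=1}^j a_{il}\bigr)$ vanish. Since $\sum_{k=1}^i a_{kj}=r_A(i,j)-r_A(i,j-1)$ and $\sum_{l=1}^j a_{il}=r_A(i,j)-r_A(i-1,j)$, this vanishing is exactly the condition
\[
(i,j)\in D(A)\iff r_A(i,j)=r_A(i-1,j)=r_A(i,j-1).
\]
So the proposed characterization reduces to showing that for $(i,j)\in D(A)$,
\[
(i+1,j),(i,j+1)\notin D(A)\iff r_A(i+1,j)=r_A(i,j+1)=r_A(i,j)+1.
\]

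Next I would use \ref{item:R2}, which guarantees that $r_A(i+1,j)-r_A(i,j)\in\{0,1\}$ and $r_A(i,j+1)-r_A(i,j)\in\{0,1\}$. Thus there are only two possibilities for each, and it suffices to rule out the ``increment zero'' case exactly when the neighbor lies outside $D(A)$. Suppose $(i,j)\in D(A)$ and $r_A(i+1,j)=r_A(i,j)$. I would then use monotonicity of $r_A$ in each coordinate together with $r_A(i,j-1)=r_A(i,j)$ to squeeze
\[
r_A(i,j)=r_A(i,j-1)\le r_A(i+1,j-1)\le r_A(i+1,j)=r_A(i,j),
\]
forcing $r_A(i+1,j-1)=r_A(i+1,j)$. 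Combined with the hypothesis $r_A(i+1,j)=r_A(i,j)=r_A((i+1)-1,j)$, the first step gives $(i+1,j)\in D(A)$. Conversely, if $r_A(i+1,j)=r_A(i,j)+1$ then $r_A(i+1,j)\ne r_A(i,j)=r_A((i+1)-1,j)$, so $(i+1,j)\notin D(A)$. The argument for $(i,j+1)$ is completely symmetric.

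Putting these pieces together, an element $(i,j)\in D(A)$ is southeast-most in its connected component (i.e.\ lies in $\mathcal Ess(A)$) precisely when neither neighbor below nor to the right is in $D(A)$, which by the two symmetric dichotomies is precisely when $r_A(i+1,j)=r_A(i,j+1)=r_A(i,j)+1$. The hardest step is really the squeeze argument showing that a zero vertical increment at $(i+1,j)$ propagates a zero increment to $(i+1,j-1)$; everything else is a direct substitution. I do not anticipate any serious obstacle beyond making sure to handle boundary rows and columns gracefully via the convention $r_A(i,0)=r_A(0,j)=0$ and the boundary values $r_A(n,j)=j$, $r_A(i,n)=i$ from \ref{item:R1}.
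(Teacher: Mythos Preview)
Your approach is essentially identical to the paper's: both first translate $D(A)$ into the corner-sum condition $r_A(i,j)=r_A(i-1,j)=r_A(i,j-1)$ via (\ref{eqn:inversion2}), then use \ref{item:R2} and the same squeeze $r_A(i,j)=r_A(i,j-1)\le r_A(i+1,j-1)\le r_A(i+1,j)$ to force $(i+1,j)\in D(A)$ when the vertical increment is zero. One slip: you write that the two factors $\bigl(1-\sum a_{kj}\bigr)$ and $\bigl(1-\sum a_{il}\bigr)$ ``vanish,'' but for the product to equal $1$ they must each equal $1$ (equivalently, the \emph{sums} vanish); your subsequent identification $r_A(i,j)=r_A(i-1,j)=r_A(i,j-1)$ is nonetheless correct.
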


\begin{proof}
By (\ref{eqn:inversion2}), if $(i,j)\in D(A)$ if and only if $\sum_{k=1}^ia_{kj}=0$ and $\sum_{l=1}^ja_{il}=0$.
Since \[r_A(i,j)-r_A(i-1,j)=\sum_{l=1}^ja_{il} \quad  \text{ and } \quad r_A(i,j)-r_A(i,j-1)=\sum_{k=1}^ia_{kj}\] 
we have
\begin{equation}
\label{eqn:diagramchar}
(i,j)\in D(A) \quad \text{ if and only if } \quad r_A(i,j)=r_A(i-1,j)=r_A(i,j-1).
\end{equation}

\noindent $(\subseteq)$
Assume $(i,j)\in \mathcal Ess(A)$.  By definition, $(i+1,j),(i,j+1)\not\in D(A)$.  Since $(i,j)\in \mathcal D(A)$, applying (\ref{eqn:diagramchar}) and \ref{item:R2}, we have \[r_A(i,j)=r_A(i,j-1)\leq r_A(i+1,j-1)\leq r_A(i+1,j).\] If $r_A(i+1,j)=r_A(i,j)$ then $r_A(i+1,j)=r_A(i+1,j-1)$.  Then by (\ref{eqn:diagramchar}), we have $(i+1,j-1)\in \mathcal D(A)$, contradicting $(i,j)\in \mathcal Ess(A)$.  So $r_A(i,j)+1=r_A(i+1,j)$.  The argument for $r_A(i,j)+1=r_A(i,j+1)$ is entirely analogous.

\noindent $(\supseteq)$
By assumption, $r_A(i,j)=r_A(i-1,j)=r_A(i,j-1)$.  Then applying (\ref{eqn:diagramchar}), $(i,j)\in \mathcal D(A)$.  Since $r_A(i,j)\neq r_A(i+1,j)$ and $r_A(i,j)\neq r_A(i,j+1)$, we conclude 
\[(i+1,j),(i,j+1)\not \in \mathcal D(A).\]  So $(i,j)\in \mathcal Ess(A)$.
\end{proof}

\begin{lemma}
\label{lemma:bigrassranktest}
$[i,j,r]_b=\wedge\{A\in{\sf ASM}(n):r_A(i,j)\leq r\}$.
\end{lemma}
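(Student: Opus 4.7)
The plan is to verify that $[i,j,r]_b$ both belongs to
\[ S := \{A \in \mathsf{ASM}(n) : r_A(i,j) \leq r\} \]
and is a lower bound for $S$ in the ASM order; since the meet of $S$ is by definition the greatest lower bound, this will force $[i,j,r]_b = \wedge S$. Membership is immediate from Lemma~\ref{lemma:biGrassmannianchar}(2), which gives $r_{[i,j,r]_b}(i,j) = r$.

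The key input is a closed form for the rank function of $w := [i,j,r]_b$. Reading $w$ off from the block decomposition (\ref{eqn:biGrDef}) and splitting $a \in \{1,\ldots,n\}$ into the four row blocks of sizes $r$, $i-r$, $j-r$, $n-i-j+r$, a direct case analysis on whether $a$ lies in block 1, 2, 3, or 4 yields
\begin{equation}
\label{eqn:bigrrankformula}
r_{[i,j,r]_b}(a,b) \;=\; \min\!\bigl(a,\; b,\; r + \max(0,a-i) + \max(0,b-j)\bigr).
\end{equation}
Conditions \ref{item:B2} and \ref{item:B3} are precisely what keep (\ref{eqn:bigrrankformula}) compatible with the corner-sum constraints of \ref{item:R1}--\ref{item:R2}; the degenerate case $r = \min(i,j)$, in which $[i,j,r]_b = \mathrm{id}$, is absorbed by the same formula.

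It then remains to show $r_A(a,b) \leq r_{[i,j,r]_b}(a,b)$ pointwise for every $A \in S$, which by Lemma~\ref{lemma:rankbruhat} is exactly the statement $[i,j,r]_b \leq A$. The bounds $r_A(a,b) \leq \min(a,b)$ are automatic from \ref{item:R1}--\ref{item:R2}. For the third entry of the minimum in (\ref{eqn:bigrrankformula}), the step condition \ref{item:R2} gives the Lipschitz-type estimate
\[ r_A(a,b) \;\leq\; r_A(a',b') + (a - a') + (b - b') \qquad \text{whenever } a \geq a' \text{ and } b \geq b'. \]
Applying this with $(a',b') = (\min(a,i), \min(b,j))$, so that monotonicity of $r_A$ yields $r_A(a',b') \leq r_A(i,j) \leq r$, gives $r_A(a,b) \leq r + \max(0,a-i) + \max(0,b-j)$ in every one of the four sign regions of $(a-i, b-j)$. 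Combining with the two trivial bounds proves $r_A \leq r_{[i,j,r]_b}$.

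Because $[i,j,r]_b$ itself lies in $S$, being a lower bound for $S$ automatically makes it the greatest one, and the lemma follows. The only step demanding actual care is the closed-form computation (\ref{eqn:bigrrankformula}); once that is in hand, the Lipschitz argument is essentially immediate, and no separate treatment of the degenerate case $r=\min(i,j)$ is needed.
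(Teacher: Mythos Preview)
Your argument is correct. The paper itself does not supply a proof of this lemma; it simply cites \cite[Theorem~30]{brualdi2017alternating} and remarks that an analogous statement appears in \cite{lascoux1996treillis}. So there is no ``paper's own proof'' to compare against in detail.

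That said, your self-contained route is clean and worth recording. The heart of it is the explicit closed form (\ref{eqn:bigrrankformula}) for the corner-sum function of a biGrassmannian, read directly off the block decomposition~(\ref{eqn:biGrDef}); once that is in hand, the Lipschitz bound coming from \ref{item:R2} makes the lower-bound verification a one-line inequality. Two small remarks: the pointwise comparison $r_A \leq r_{[i,j,r]_b}$ being equivalent to $[i,j,r]_b \leq A$ is the definition~(\ref{eqn:asmposetdef}) of the ASM order rather than Lemma~\ref{lemma:rankbruhat} per se; and the case analysis behind (\ref{eqn:bigrrankformula}) is routine but does require checking all four row-block ranges against all four column-block ranges, so if you intend this as a complete proof rather than a sketch you should either carry that out or cite a reference where it is done.
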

See {\cite[Theorem 30]{brualdi2017alternating}} for a proof.  An analogous statement in terms of monotone triangles appears in \cite{lascoux1996treillis}.  Note in particular, 
\begin{equation}
\text{if $r_A(i,j)\leq r$, we have $[i,j,r]_b\leq A$.}
\end{equation}
  This is a special case of the generalized essential criterion given in \cite{kobayashi2013more}.

\begin{lemma}
\label{lemma:bigrallentries}
Fix $A\in {\sf ASM}(n)$. 
\begin{enumerate}
\item For all $1\leq i,j\leq n$, $[i,j,r_A(i,j)]_b\in \mathcal B_n$ or $[i,j,r_A(i,j)]_b={\rm id}$.
\item $A=\vee\{[i,j,r_A(i,j)]_b:1\leq i,j\leq n\}$.
\end{enumerate}
\end{lemma}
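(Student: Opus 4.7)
The plan is to derive both parts directly from properties already recorded in the excerpt, primarily Lemma~\ref{lemma:bigrassranktest} and Lemma~\ref{lemma:biGrassmannianchar}, together with the characterization \ref{item:R1}--\ref{item:R2} of corner sum matrices. Part (1) is a matter of checking inequalities, and part (2) reduces to pointwise verification of rank functions.

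For part (1), I need to confirm that the triple $(i,j,r_A(i,j))$ satisfies \ref{item:B1}--\ref{item:B3} whenever it is not the degenerate case $r_A(i,j)=\min(i,j)$ giving the identity. Condition \ref{item:B1} is immediate. For \ref{item:B2}, \ref{item:R2} (together with $r_A(0,j)=r_A(i,0)=0$) forces $0 \leq r_A(i,j) \leq \min(i,j)$ by induction on $i+j$, and if the upper bound is achieved then by convention $[i,j,r_A(i,j)]_b=\mathrm{id}$; otherwise $r_A(i,j)<\min(i,j)$ and \ref{item:B2} holds. For \ref{item:B3}, I will use \ref{item:R1} (specifically $r_A(i,n)=i$) combined with \ref{item:R2}: since $r_A(i,n)-r_A(i,j)\leq n-j$, we obtain $r_A(i,j)\geq i+j-n$, i.e.\ $i+j-r_A(i,j)\leq n$.

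For part (2), I set $B=\vee\{[i,j,r_A(i,j)]_b:1\leq i,j\leq n\}$ and show $r_A=r_B$, which suffices since the map $A\mapsto r_A$ is a bijection. First, for each $(i,j)$, Lemma~\ref{lemma:bigrassranktest} (in the form of the explicit consequence stated after it) applied with $r=r_A(i,j)$ gives $[i,j,r_A(i,j)]_b \leq A$, so $A$ is an upper bound in ${\sf ASM}(n)$, and hence $B \leq A$. Translating through the order-reversing identification with ${\sf R}(n)$ (Lemma~\ref{lemma:rankbruhat}), this gives $r_B(k,\ell)\geq r_A(k,\ell)$ pointwise, and indeed
\[
r_B(k,\ell)=\min_{1\leq i,j\leq n} r_{[i,j,r_A(i,j)]_b}(k,\ell).
\]
For the reverse inequality at an arbitrary position $(k,\ell)$, I specialize to the term $(i,j)=(k,\ell)$ in the minimum: Lemma~\ref{lemma:biGrassmannianchar}(2) yields $r_{[k,\ell,r_A(k,\ell)]_b}(k,\ell)=r_A(k,\ell)$ in the biGrassmannian case, while in the identity case $r_A(k,\ell)=\min(k,\ell)=r_{\mathrm{id}}(k,\ell)$, so the same equality holds. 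Hence $r_B(k,\ell)\leq r_A(k,\ell)$, forcing equality and therefore $B=A$.

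There is no real obstacle here; the only delicate point is bookkeeping the order reversal between $({\sf ASM}(n),\leq)$ and $({\sf R}(n),\leq_{\mathrm{pointwise}})$, which flips join and meet under the bijection $A\mapsto r_A$. The result that $A$ is determined by choosing one biGrassmannian per lattice position, rather than the a priori smaller set ${\tt biGr}(A)=\mathrm{MAX}(\pi_{\mathcal B_n}(A))$, is not more efficient but provides a uniform, explicit formula that will be convenient for identifying $A_{\boldsymbol\lambda,\mathbf d}$ from the essential data via Proposition~\ref{prop:therightasm}.
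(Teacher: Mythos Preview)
Your proposal is correct and follows essentially the same argument as the paper's own proof: part (1) verifies \ref{item:B1}--\ref{item:B3} from \ref{item:R1}--\ref{item:R2} in the same way, and part (2) bounds the join from both sides via Lemma~\ref{lemma:bigrassranktest} and the pointwise identity $r_{[k,\ell,r_A(k,\ell)]_b}(k,\ell)=r_A(k,\ell)$. The only cosmetic difference is that you cite Lemma~\ref{lemma:biGrassmannianchar}(2) explicitly and separately treat the identity case, whereas the paper asserts these equalities directly.
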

\begin{proof}

\noindent (1)  From \ref{item:R2} we must have $r_A(i,j)\leq \min\{i,j\}$.  If this is an equality, we have $[i,j,r_A(i,j)]_b={\rm id}$ and we are done.  So assume not.  By \ref{item:R1}, $r_A(i,n)=i$.  As a consequence of \ref{item:R2}, $n-j\geq i-r_A(i,j)$.  Then $i+j-r_A(i,j)\leq n$.  So the conditions \ref{item:B1}-\ref{item:B3} are satisfied.

\noindent (2)  Let $A'= \vee\{[i,j,r_A(i,j)]_b:1\leq i,j\leq n]\}$. By Lemma~\ref{lemma:bigrassranktest}, $A$ is an upper bound to each $[i,j,r_A(i,j)]_b$.  So $A\geq A'$ and $r_A\leq  r_{A'}$.  Since $r_{A'}$ is entrywise the minimum of the corner sum matrices of the $[i,j,r_A(i,j)]_b$'s, in particular, $r_{A'}(i,j)\leq r_{[i,j,r_A(i,j)]_b}(i,j)=r_A(i,j)$.  Then $r_{A'}\leq r_A$.  As such, $r_{A'}=r_A$ and so $A'=A$.
\end{proof}

\begin{lemma}
\label{lemma:throwawaystuff}
Assume $A\neq I_n$.
If $(i,j)\not \in \mathcal Ess(A)$, then there is some $(i',j')$ so that \[[i,j,r_A(i,j)]_b<[i',j',r_A(i',j')]_b.\]
\end{lemma}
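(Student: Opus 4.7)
The plan is to reduce the problem to the corner-sum characterization of $\mathcal Ess(A)$ in Lemma~\ref{lemma:essentialrankchar} and the order criterion supplied by Lemma~\ref{lemma:bigrassranktest}: for any $B \in {\sf ASM}(n)$, $B \geq [i,j,r]_b$ if and only if $r_B(i,j) \leq r$.  Set $r := r_A(i,j)$.  I first dispose of the edge case $[i,j,r]_b = {\rm id}$ (equivalently $r = \min(i,j)$): since $A \neq I_n$, Lemma~\ref{lemma:bigrallentries} guarantees some $(i',j')$ with $[i',j',r_A(i',j')]_b \in \mathcal B_n$, and any honest biGrassmannian strictly dominates the identity.

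Now assume $r < \min(i,j)$, so $[i,j,r]_b \in \mathcal B_n$.  Lemma~\ref{lemma:essentialrankchar} together with axiom~\ref{item:R2} (corner-sum differences lie in $\{0,1\}$) converts the hypothesis $(i,j) \notin \mathcal Ess(A)$ into the assertion that at least one of
\begin{enumerate}
\item[(a)] $r_A(i-1,j) = r-1$,
\item[(b)] $r_A(i,j-1) = r-1$,
\item[(c)] $r_A(i+1,j) = r$,
\item[(d)] $r_A(i,j+1) = r$
\end{enumerate}
holds.  In each case the natural witness $(i',j')$ is the corresponding neighbor of $(i,j)$: the northwest neighbor in (a)--(b) taken with $r' = r-1$, and the southeast neighbor in (c)--(d) taken with $r' = r$.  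By construction $r' = r_A(i',j')$ in every case.

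For each of the four cases one then verifies two things: first, the triple $(i',j',r')$ satisfies \ref{item:B1}--\ref{item:B3} with $r' < \min(i',j')$, so $[i',j',r']_b$ is a bona fide biGrassmannian; second, $r_{[i',j',r']_b}(i,j) = r$, computed directly from the block description~(\ref{eqn:biGrDef}).  The second item, combined with Lemma~\ref{lemma:bigrassranktest}, yields $[i',j',r']_b \geq [i,j,r]_b$.  Strictness is automatic from Lemma~\ref{lemma:biGrassmannianchar}(1), since biGrassmannians are uniquely determined by their essential box and here $(i',j') \neq (i,j)$.

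The only genuine calculation is the corner-sum verification in the second item, and this is the main (mild) obstacle.  For instance, in case (c), $w = [i+1,j,r]_b$ is the identity on $[1,r]$ and sends $[r+1,i+1]$ to $[j+1, i+j-r+1]$, so among $k \leq i$ only $k \in [1,r]$ satisfies $w(k) \leq j$, giving $r_w(i,j) = r$ exactly.  Cases (a), (b), (d) reduce to analogous unpackings of~(\ref{eqn:biGrDef}); in (a) and (b) the final ``hit'' counting up to $r$ is supplied by the single crossing index $k = i$, whose image lands in column $r \leq j$.
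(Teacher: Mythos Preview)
Your proof is correct and follows essentially the same route as the paper: the identity edge case, the four-case split via Lemma~\ref{lemma:essentialrankchar} and~\ref{item:R2}, the same neighbor witness $(i',j')$ in each case, and comparison via Lemma~\ref{lemma:bigrassranktest}. You are in fact more explicit than the paper on two points---you verify \ref{item:B1}--\ref{item:B3} for $(i',j',r')$, and you derive strictness from Lemma~\ref{lemma:biGrassmannianchar}(1)---whereas the paper leaves both implicit.

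One small slip in your final sentence: in case~(b), with $u'=[i,j-1,r-1]_b$, the extra $1$ bringing $r_{u'}(i,j)$ up to $r$ sits at position $(r,j)$ (row $r$ is the first row of the second block, mapping to column $j$), not at $(i,r)$. The description ``crossing index $k=i$, image in column $r$'' is correct for case~(a) only; case~(b) is its transpose, not its copy. This does not affect the validity of the argument---the conclusion $r_{u'}(i,j)=r$ still holds---but the indexing should be corrected.
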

\begin{proof}

Let $u=[i,j,r_A(i,j)]_b$.  If $r_A(i,j)=\min\{i,j\}$ then $u$ is the identity and hence smaller than any  biGrassmannian.  So assume $r_A(i,j)<\min\{i,j\}$. Since $A\neq I_n$, there is some $(i',j')$ exists for which $[i',j',r_A(i',j')]_b\in \mathcal B_n$  (e.g. some $(i',j')\in \mathcal Ess(A)$).  

Applying Lemma~\ref{lemma:essentialrankchar} and \ref{item:R2}, there are four potential ways for $(i,j)$ to fail to be in $\mathcal Ess(A)$.

\noindent Case 1: $r_A(i,j)= r_A(i-1,j)+1$.

Since we have assumed $r_A(i,j)<\min\{i,j\}$ and $r_A(0,j)=0$, we must have $i>1$.
So let $u'=[i-1,j,r_A(i-1,j)]_b$.  

Then $r_{u'}(i,j)=r_{u'}(i-1,j)+1=r_A(i-1,j)+1=r_A(i,j)=r_u(i,j)$ so by Lemma~\ref{lemma:bigrassranktest}, $u\leq u'$.

\noindent Case 2: $r_A(i,j)=r_A(i,j-1)+1$.

The argument is entirely analogous to Case 1.

\noindent Case 3: $r_A(i,j)= r_A(i+1,j)$.

Now let $u'=[i+1,j,r_A(i+1,j)]_b$.  Then \[r_{u'}(i,j)=r_{u'}(i+1,j)=r_A(i+1,j)=r_A(i,j)=r_u(i,j).\]  Applying 
Lemma~\ref{lemma:bigrassranktest}, we have $u<u'$.

\noindent Case 4: $r_A(i,j)= r_A(i,j+1)$.

This is essentially the same as Case 3.
\end{proof}

The following proposition shows how to recover ${\tt biGr}(A)$ from $\mathcal Ess(A)$. 
\begin{proposition}
\label{prop:bigrassList}
${\tt biGr}(A)=\{[i,j,r_A(i,j)]_b:(i,j)\in \mathcal Ess(A)\}.$
\end{proposition}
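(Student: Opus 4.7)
The approach combines three ingredients: Lemma~\ref{lemma:bigrallentries}(2), which presents $A$ as the join of the family $\{[a,b,r_A(a,b)]_b : 1\leq a,b\leq n\}$; Lemma~\ref{lemma:throwawaystuff}, which lets us discard non-essential indices from this family; and join-irreducibility of biGrassmannians in the distributive lattice ${\sf ASM}(n)$ (Lemmas~\ref{lemma:rankbruhat} and~\ref{lemma:symgroupbase}), which ensures that a join-irreducible below a join lies below one of the joinands.

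For the inclusion $\subseteq$, let $u\in {\tt biGr}(A)$. By join-irreducibility applied to $u\leq A=\vee_{a,b}[a,b,r_A(a,b)]_b$, there is some $(a,b)$ with $u\leq [a,b,r_A(a,b)]_b$. Lemma~\ref{lemma:bigrallentries}(1) ensures the latter is biGrassmannian or the identity, and the identity case is excluded since $u\in\mathcal B_n$; then maximality of $u$ in $\pi_{\mathcal B_n}(A)$ forces $u=[a,b,r_A(a,b)]_b$. If $(a,b)\notin\mathcal Ess(A)$, Lemma~\ref{lemma:throwawaystuff} supplies a strictly larger biGrassmannian below $A$, contradicting maximality; so $(a,b)\in\mathcal Ess(A)$.

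For the inclusion $\supseteq$, fix $(i,j)\in\mathcal Ess(A)$ and set $u=[i,j,r_A(i,j)]_b$, writing $\rho=r_A(i,j)$. Using Lemma~\ref{lemma:essentialrankchar} and~\ref{item:R2}, one checks $\rho\leq\min(i,j)-1$, so $u\in\mathcal B_n$; and $u\leq A$ by Lemma~\ref{lemma:bigrassranktest}. To verify $u\in{\tt MAX}(\pi_{\mathcal B_n}(A))$, take $v\in\pi_{\mathcal B_n}(A)$ with $u\leq v$, extend to a maximal $v'\geq v$ in $\pi_{\mathcal B_n}(A)$, and apply the $\subseteq$ direction to conclude $v'=[i',j',r_A(i',j')]_b$ for some $(i',j')\in\mathcal Ess(A)$. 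Lemma~\ref{lemma:bigrassranktest} combined with $v'\leq A$ (i.e.~$r_{v'}\geq r_A$ pointwise) forces $r_{v'}(i,j)=\rho$. The essential condition at $(i,j)$ for $A$ gives $r_A(i-1,j)=r_A(i,j-1)=\rho$ and $r_A(i+1,j)=r_A(i,j+1)=\rho+1$; the sandwich $r_A\leq r_{v'}$ together with the unit-step bound \ref{item:R2} for the ASM $v'$ then propagates these same five values to $r_{v'}$. Applying Lemma~\ref{lemma:essentialrankchar} to $v'$ yields $(i,j)\in\mathcal Ess(v')$, but $\mathcal Ess(v')=\{(i',j')\}$ by Lemma~\ref{lemma:biGrassmannianchar}, so $(i,j)=(i',j')$ and thus $u=v'$.

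The main obstacle is the final step of the $\supseteq$ direction, namely showing that distinct essential indices of $A$ produce incomparable biGrassmannians. The natural but painful route would be a case analysis of the corner-sum function of a biGrassmannian across its four block regions. The key trick above sidesteps that bookkeeping: the scalar equality $r_{v'}(i,j)=r_A(i,j)$ is propagated outward using only the pointwise bound $r_{v'}\geq r_A$ and the unit-step axiom~\ref{item:R2}, at which point the essential-set characterization (Lemma~\ref{lemma:essentialrankchar}) recognizes $(i,j)$ as lying in $\mathcal Ess(v')$, and uniqueness of a biGrassmannian essential position closes the argument.
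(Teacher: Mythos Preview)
Your proof is correct and follows essentially the same route as the paper: the crucial step---showing that if one essential biGrassmannian lies below another then they must coincide---is exactly the paper's Claim~\ref{claim:antichain}, proved by the same propagation of the five corner-sum values via the sandwich $r_A\leq r_{v'}$ and axiom~\ref{item:R2}. The only real difference is your $\subseteq$ direction, where you invoke the join-prime property of join-irreducibles in the distributive lattice ${\sf ASM}(n)$ to place $u$ below a single $[a,b,r_A(a,b)]_b$; the paper instead implicitly uses that any biGrassmannian $u=[a,b,r]_b\leq A$ satisfies $u\leq[a,b,r_A(a,b)]_b$ directly from Lemma~\ref{lemma:bigrassranktest}, then iterates Lemma~\ref{lemma:throwawaystuff}. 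Both arguments land in the same place, and yours is arguably the cleaner packaging.
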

Proposition~\ref{prop:bigrassList} is  discussed in \cite[Section~5]{lascoux1996treillis},  using essential points of  monotone triangles.  It can be found in a slightly more general context in \cite[Theorem~5.1]{fortin2008macneille}.     As an immediate consequence,  $A$ is determined by the restriction of $r_A$ to $\mathcal Ess(A)$.  This generalizes \cite[Lemma 3.10]{fulton1992flags}.
\begin{proof}[Proof of Proposition~\ref{prop:bigrassList}]

First note that 
\begin{equation}
\label{eqn:biGrcontain}
{\tt biGr}(A)\subseteq \{[i,j,r_A(i,j)]_b:(i,j)\in \mathcal Ess(A)\}.
\end{equation}  If $A=I_n$  then ${\tt biGr}(A)=\{\}=\mathcal Ess(A)$.  So assume not.

By Lemma~\ref{lemma:throwawaystuff}, whenever $(i,j)\not \in \mathcal Ess(A)$, there is some $(i',j')$ so that \[[i,j,r_A(i,j)]_b<[i',j',r_A(i',j')]_b.\]  
We may iteratively apply the Lemma~\ref{lemma:throwawaystuff} to construct a chain of inequalities
\[[i,j,r_A(i,j)]_b<[i',j',r_A(i',j')]_b< \ldots <[i'',j'',r_A(i'',j'')]_b\]  
with $(i'',j'')\in\mathcal Ess(A)$.
Therefore
\begin{align*}
A&=\vee\{[i,j,r_A(i,j)]_b:1\leq i,j\leq n]\} & \text{(by Lemma~\ref{lemma:bigrallentries})}\\
&=\vee \{[i,j,r_A(i,j)]_b: (i,j)\in \mathcal Ess(A)]\}.
\end{align*}
In particular, by (\ref{eqn:biGrcontain}) any biGrassmannian below $A$ has an upper bound in \[\{[i,j,r_A(i,j)]_b:(i,j)\in \mathcal Ess(A)\}.\]
\begin{claim}
\label{claim:antichain}
$\{[i,j,r_A(i,j)]_b:(i,j)\in \mathcal Ess(A)\}$ is an antichain, i.e. its elements are all incomparable.
\end{claim}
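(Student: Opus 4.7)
The plan is to show that if $u=[i,j,r_A(i,j)]_b$ and $u'=[i',j',r_A(i',j')]_b$ arise from two essential positions of $A$ and satisfy $u\leq u'$, then $(i,j)=(i',j')$. Since $\mathcal Ess(u')=\{(i',j')\}$ by Lemma~\ref{lemma:biGrassmannianchar}, it suffices to prove that $(i,j)\in \mathcal Ess(u')$, whereupon the antichain property follows by symmetry.

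The first step is to pin down $r_{u'}(i,j)$. By Lemma~\ref{lemma:bigrassranktest}, $u=\wedge\{B\in {\sf ASM}(n):r_B(i,j)\leq r_A(i,j)\}$, so $u\leq u'$ forces $r_{u'}(i,j)\leq r_A(i,j)$. On the other hand, $u'\leq A$ (since $u'\in \pi_{\mathcal B_n}(A)$) gives $r_{u'}\geq r_A$ pointwise, so in particular $r_{u'}(i,j)\geq r_A(i,j)$. These two inequalities collapse to the equality $r_{u'}(i,j)=r_A(i,j)$.

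The second step verifies the four defining conditions of Lemma~\ref{lemma:essentialrankchar} for $(i,j)$ in $u'$. Since $(i,j)\in \mathcal Ess(A)$, that lemma provides $r_A(i,j)=r_A(i-1,j)=r_A(i,j-1)$ and $r_A(i,j)+1=r_A(i+1,j)=r_A(i,j+1)$. Combine these with $r_{u'}\geq r_A$ and the ASM constraint \ref{item:R2}. For the northwest neighbor,
\[
r_{u'}(i-1,j)\;\geq\; r_A(i-1,j)\;=\;r_A(i,j)\;=\;r_{u'}(i,j),
\]
while \ref{item:R2} forces $r_{u'}(i-1,j)\leq r_{u'}(i,j)$; hence $r_{u'}(i-1,j)=r_{u'}(i,j)$. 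The identical argument treats $r_{u'}(i,j-1)$. For the southeast neighbor,
\[
r_{u'}(i+1,j)\;\geq\; r_A(i+1,j)\;=\;r_A(i,j)+1\;=\;r_{u'}(i,j)+1,
\]
while \ref{item:R2} forces $r_{u'}(i+1,j)\leq r_{u'}(i,j)+1$; the same inequalities handle $r_{u'}(i,j+1)$. All four conditions of Lemma~\ref{lemma:essentialrankchar} are verified, so $(i,j)\in \mathcal Ess(u')=\{(i',j')\}$, which yields $(i,j)=(i',j')$ and hence $u=u'$.

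The only subtle point is keeping the direction of the order straight: the poset on ${\sf ASM}(n)$ reverses the pointwise order on corner sum matrices, so $u\leq u'$ translates to $r_u\geq r_{u'}$, and care is needed when invoking Lemma~\ref{lemma:bigrassranktest}. Once that is fixed, the combinatorics of Lemma~\ref{lemma:essentialrankchar} together with the ASM difference bound \ref{item:R2} does all the work, and no case analysis of the biGrassmannian structure is needed beyond the single-box essential set supplied by Lemma~\ref{lemma:biGrassmannianchar}.
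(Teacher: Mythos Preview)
Your proof is correct and takes essentially the same approach as the paper's: both establish $r_{u'}(i,j)=r_A(i,j)$ and then use Lemma~\ref{lemma:essentialrankchar} together with \ref{item:R2} and $r_{u'}\geq r_A$ to force $(i,j)\in\mathcal Ess(u')=\{(i',j')\}$. The only difference is organizational: you assume $u\leq u'$ at the outset and derive $r_{u'}(i,j)\leq r_A(i,j)$ directly, whereas the paper performs a case split on whether $r_{u'}(i,j)\leq r_A(i,j)$ or not (the second case simply rules out $u\leq u'$); your packaging is slightly cleaner but the substance is identical.
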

\begin{proof}
Take $(i,j),(i',j')\in \mathcal Ess(A)$.  Write $u=[i,j,r_A(i,j)]_b$ and $u'=[i',j',r_A(i',j')]_b$.

\noindent Case 1: $r_{u'}(i,j)\leq r$.

Since $u'\leq A$, we have $r_{u'}\geq r_A$.  In particular, $r_{u'}(i,j)\geq r$.  So $r_{u'}(i,j)=r$.  
By condition \ref{item:R2}, $r_{u'}(i-1,j),r_{u'}(i,j-1)\in \{r-1,r\}$ and $r_{u'}(i+1,j),r_{u'}(i,j+1)\in \{r,r+1\}$.  But since $(i,j)\in \mathcal Ess(A)$ and $r_{u'}\geq r_A$, applying Lemma~\ref{lemma:essentialrankchar} we are forced to have \[r_{u'}(i-1,j)=r_{u'}(i,j-1)=r=r_A(i-1,j)=r_A(i,j-1)\] and  \[r_{u'}(i+1,j)=r_{u'}(i,j+1)=r+1=r_{A}(i+1,j)=r_{A}(i,j+1).\]

Then $(i,j)\in \mathcal Ess(u')\{(i',j')\}$.  As such, $u'=u$.

\noindent Case 2: $r_{u'}(i,j)>r$.

Then $r_{u'}(i,j)>r_u(i,j)$.  So immediately, we conclude $u \not \geq u'$.

We may reverse the roles of $u$ and $u'$ in the above argument.  Therefore either $u$ and $u'$ are incomparable or $u=u'$.
\end{proof}

As a consequence of Claim~\ref{claim:antichain}, we have shown that $\{[i,j,r_A(i,j)]_b:(i,j)\in \mathcal Ess(A)\}$ is an antichain of biGrassmannian permutations whose least upper bound is $A$.  
Therefore, ${\tt biGr}(A)=\{[i,j,r_A(i,j)]_b:(i,j)\in \mathcal Ess(A)\}$.
\end{proof}

\begin{lemma}
\label{lemma:grassproject}
Suppose $A\in {\sf ASM}(n)$ and $u\in \mathcal G_n$.   If $r_A({\tt des}(u),j)\leq r_u({\tt des}(u),j)$ for all $j=1,\ldots, n$, then $u\leq A$.
\end{lemma}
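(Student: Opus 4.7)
Recall from (\ref{eqn:asmposetdef}) that $u\leq A$ means $r_A(i,j)\leq r_u(i,j)$ for all $1\leq i,j\leq n$, so this is exactly what must be established under the hypothesis that the inequality holds along row $d={\tt des}(u)$.

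The plan is to first exploit the Grassmannian structure of $u$ to express $r_u(i,j)$ for any $i$ purely in terms of the row-$d$ values $r_u(d,j)$, and then to use the monotonicity properties \ref{item:R2} of $r_A$ to compare. Since $u$ has its unique descent at $d$, the values $u(1)<u(2)<\cdots<u(d)$ and $u(d+1)<u(d+2)<\cdots<u(n)$ are each sorted. Because $u(1),\ldots,u(d)$ are sorted, among these the first $i$ in position are also the $i$ smallest in value, which yields, for $i\leq d$,
\[
r_u(i,j)=\min\bigl(i,\,r_u(d,j)\bigr).
\]
Similarly, for $i\geq d$, counting how many of the sorted tail $u(d+1),\ldots,u(i)$ are at most $j$ (knowing that exactly $j-r_u(d,j)$ of the entire tail $u(d+1),\ldots,u(n)$ have this property) gives
\[
r_u(i,j)=\min\bigl(r_u(d,j)+(i-d),\,j\bigr).
\]

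Now split into two cases. If $i\leq d$, then by \ref{item:R2} applied iteratively from row $0$, $r_A(i,j)\leq i$, and by monotonicity of $r_A$ in its first coordinate together with the hypothesis, $r_A(i,j)\leq r_A(d,j)\leq r_u(d,j)$. Hence $r_A(i,j)\leq\min(i,r_u(d,j))=r_u(i,j)$. If instead $i\geq d$, then \ref{item:R2} gives $r_A(i,j)\leq j$, and telescoping the unit-step differences $r_A(k,j)-r_A(k-1,j)\in\{0,1\}$ for $k=d+1,\ldots,i$ yields $r_A(i,j)\leq r_A(d,j)+(i-d)\leq r_u(d,j)+(i-d)$, so again $r_A(i,j)\leq r_u(i,j)$.

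The proof is essentially routine once the explicit formulas for $r_u(i,j)$ in terms of row $d$ are in hand; there is no serious obstacle, but the one substantive point is recognizing that the sortedness of $u(1),\ldots,u(d)$ and of $u(d+1),\ldots,u(n)$ forces $r_u$ to be ``pinched'' against the diagonal and antidiagonal envelopes determined by row $d$, which is exactly the flexibility that $r_A$ lacks no more of than $r_u$ under \ref{item:R2}.
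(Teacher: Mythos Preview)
Your proof is correct but takes a genuinely different route from the paper's. The paper argues structurally: since $u$ is Grassmannian with descent at $d$, every essential box of $u$ lies in row $d$, so every element of ${\tt biGr}(u)$ has the form $[d,j,r_u(d,j)]_b$. The hypothesis $r_A(d,j)\leq r_u(d,j)$ then forces $[d,j,r_u(d,j)]_b\leq A$ for each such $j$ via Lemma~\ref{lemma:bigrassranktest}, and since $u=\vee{\tt biGr}(u)$ one gets $u\leq A$. By contrast, you bypass the biGrassmannian machinery entirely and give a direct elementary computation: you derive closed formulas for $r_u(i,j)$ in terms of $r_u(d,j)$ from the sortedness of the two blocks of $u$, and then check $r_A(i,j)\leq r_u(i,j)$ pointwise using only \ref{item:R2}. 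Your argument is self-contained and requires none of the lattice-theoretic infrastructure (Lemma~\ref{lemma:bigrassranktest}, Proposition~\ref{prop:bigrassList}); the paper's proof is shorter on the page precisely because it cashes in that infrastructure, and it makes transparent why ``row $d$'' suffices, namely that $\mathcal Ess(u)$ lives there.
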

\begin{proof}
 Let $i={\tt des}(u)$.
 Since $r_A(i,j)\leq r_u(i,j)$, we have $A\geq [i,j,r_u(i,j)]_b$.   Since $u$ is Grassmannian, all of its essential boxes occur in row $i$. Then by Lemma~\ref{lemma:bigrassranktest} we have $u'\leq A$ for all $u'\in \mathcal Ess(u)$.  So $A$ is an upper bound to $\mathcal Ess(u)$.  Then $A\geq u=\vee \mathcal Ess(u)$.
\end{proof}

With the above lemmas, we are now ready to prove Proposition~\ref{prop:therightasm}.
\begin{proof}[Proof of Proposition~\ref{prop:therightasm}]

\noindent (1)
Let $\mathcal Ess(A)=\{(i_1,j_1),\ldots, (i_k,j_k)\}$ and \[\beta^{(\ell)}=(i_\ell-r_A(i_\ell,j_\ell))\times(j_\ell-r_A(i_\ell,j_\ell))\]  as in (\ref{eqn:betapartition}).
 By construction, 
\begin{equation}
\label{eqn:bigrassencode}
[\beta^{(\ell)},i_\ell]_b=[i_\ell,j_\ell,r_A(i_\ell,j_\ell)]_b.
\end{equation}
Therefore, 
\begin{align*}
A&=\vee {\tt biGr}(A) & \text{(by (\ref{eqn:bigrjoin}))}\\
&=\vee\{[i,j,r_A(i,j)]_b:(i,j)\in \mathcal Ess(A)\} & \text{(by Proposition~\ref{prop:bigrassList})}\\
&=\vee\{[\beta^{(1)},i_1]_g,\ldots, [\beta^{(k)},i_k]_g\} & \text{(by (\ref{eqn:bigrassencode}))}\\
&=A_{\boldsymbol \beta_A,\mathbf b_A} & \text{(by (\ref{eqn:associatedasm})).}
\end{align*}

\noindent (2)
Let $u=[\lambda^{(A,i)},i]_g$.  
Since $\lambda^{(A,i)}=\lambda^{(u,i)}$, we must have 
\[m_A(i,j)=m_u(i,j) \quad \text{ for all } \quad j=1,\ldots,i.\]
Applying (\ref{eqn:trianglerank}), we have
\begin{equation}
\label{eqn:rankrowessentialsame}
r_A(i,j)=r_u(i,j) \text{ for all } j=1,\ldots, n.
\end{equation}

Let $\mathbf p_A=(i_1,\ldots,i_k)$ be the essential rows of $A$ and let $\boldsymbol \rho_A=(\lambda^{(A,i_1)}, \ldots, \lambda^{(A,i_k)})$  Then $\mathbf u_{\boldsymbol \rho_A,\mathbf p_A}=([\lambda^{(A,i_1)},i_1]_g, \ldots, [\lambda^{(A,i_k)},i_k]_g)$.

By (\ref{eqn:rankrowessentialsame}) and Lemma~\ref{lemma:grassproject}, $[\lambda^{(A,i_\ell)},i_\ell]_g\leq A$ for all $\ell=1,\ldots,k$.  As such, $A$ is an upper bound to $\mathbf u_{\boldsymbol \rho_A,\mathbf p_A}$ and hence 
\begin{equation}
\label{eqn:compare1}
A_{\boldsymbol \rho_A,\mathbf p_A}=\vee \mathbf u_{\boldsymbol \rho_A,\mathbf p_A}\leq A.
\end{equation}

On the other hand, by (\ref{eqn:rankrowessentialsame}), $r_{ [\lambda^{(A,i)},i]_g}(i,j)=r_A(i,j)$.  Then by Lemma~\ref{lemma:bigrassranktest}, 
\[[i,j,r_A(i,j)]_b\leq [\lambda^{(A,i)},i]_g \quad  \text{ for all } \quad 1\leq i,j\leq n.\]
  In particular, if $u\in {\tt biGr}(A)$, then there is some $i_\ell$ in the list $\mathbf p_A$ so that 
\[u\leq [\lambda^{(A_{i_\ell})},i_\ell]_g\leq A_{\boldsymbol \rho_A,\mathbf p_A}. \]  So $A_{\boldsymbol \rho_A,\mathbf p_A}$ is an upper bound to ${\tt biGr}(A)$ and hence 
\begin{equation}
\label{eqn:compare2}
A=\vee {\tt biGr}(A)\leq A_{\boldsymbol \rho_A,\mathbf p_A}.
\end{equation}

Therefore, by (\ref{eqn:compare1}) and (\ref{eqn:compare2}), $A=A_{\boldsymbol \rho_A,\mathbf p_A}$.
\end{proof}

We note that the parabolic model could also have been defined using a partition shape for \emph{every} row of $A$.  This has the drawback of having more redundant labels in each tableau.  However, the prism shapes have a direct connection to the poset of ASMs
\[A\leq B \quad \text{ if and only if } \quad \lambda^{(A,i)}\subseteq \lambda^{(B,i)} \quad \text{ for all } \quad i=1,\ldots,n.\]  This generalizes  the following description of the poset of Grassmannian permutations with a fixed descent.  Take $u,v\in \mathcal G_n$ with  ${\tt des}(u)={\tt des}(v)$.  Then
\[u\leq v \quad \text{ if and only if } \quad \lambda^{(u)}\subseteq \lambda^{(v)}.\]

\subsection{Inclusions of ASMs}

There is a natural inclusion $\iota:{\sf ASM}(n)\rightarrow {\sf ASM}(n+1)$ defined by \[A\mapsto 
\left(\begin{array}{c|c}
A& \mathbf 0\\
\hline
\mathbf 0&1
\end{array}
\right).
\]
We write \[{\sf ASM}(\infty):=\bigcup_{n=1}^\infty {\sf ASM}(n)/\sim\] where $\sim$ is the equivalence relation generated by $A\sim \iota(A)$.  Let \[\mathcal S_\infty=\bigcup_{n=1}^\infty \mathcal S_n/\sim.\] 
When context is clear,  we will freely identify an equivalence class its representatives.  We write $A\in {\sf ASM}(n)$ to indicate that $A$ has a representative which is an element of  ${\sf ASM}(n)$. 

Observe that
\begin{equation}
\label{eqn:asminf}
A\leq B \text{ if and only if } \iota(A)\leq \iota(B).  
\end{equation}
To see this, notice that $r_{\iota(A)}(i,n+1)=r_{\iota(A)}(n+1,i)=i$ for any $A\in {\sf ASM}(n)$.
Thus ${\sf ASM}(\infty)$ inherits the structure of a poset from the finite case.  In particular, for any $n$, there is an order embedding 
\[{\sf ASM}(n)\hookrightarrow {\sf ASM}(\infty).\]
To compare two classes in ${\sf ASM}(\infty)$, we may take $N$ large enough so that there are representatives in $A,B\in{\sf ASM}(N)$.  Due to (\ref{eqn:asminf}) the resulting order does not depend on the choice of $N$.  
Pairwise, joins and meets still exist so ${\sf ASM}(\infty)$ is a lattice.  However it is \emph{not} complete; in particular, the entire lattice ${\sf ASM}(\infty)$ has no upper bound.

Note that if $u\in \mathcal G_n$, we have \[(\lambda^{(u)},{\tt des}(u))=(\lambda^{(\iota(u))},{\tt des}(\iota(u))).\]  So the bijection in Lemma~\ref{lemma:grassbiject} is stable under inclusion.  Write $\mathcal G_\infty$ and $\mathcal B_\infty$ for the sets of Grassmannian and biGrassmannian permutations in $\mathcal S_\infty$.
Diagrams are also stable under inclusion, i.e. $D(A)=D(\iota(A))$.  
  Therefore
\begin{equation}
{\tt biGr}(\iota(A))=\{\iota(u):u\in{\tt biGr}(A)\}.
\end{equation}
Therefore, elements of ${\sf ASM}(\infty)$ are encoded by (finite) antichains in $\mathcal B_\infty$.

\subsection{Partial ASMs}

We now discuss another poset, which is closely related to ${\sf ASM}(n)$.
  A {\bf partial alternating sign matrix} is a matrix with entries in $\{-1,0,1\}$ so that
\begin{enumerate}
\item the nonzero entries in each row and column alternate in sign,
\item each row and column sums to 0 or 1, and
\item the first nonzero entry of any row or column is 1.
\end{enumerate}
 A {\bf partial permutation} is a partial ASM with entries in $\{0,1\}$.  Write ${\sf PA}(n)$ for the set of $n\times n$ partial ASMs and ${\sf P}(n)$ for the set of $n\times n$ partial permutation matrices. 
We sometimes say $A$ (or $w$) is an {\bf honest} ASM (or {\bf honest} permutation) to emphasize that $A\in {\sf ASM}(n)$ (or $w\in \mathcal S_n$).

As in the case of ASMs, we may endow ${\sf PA}(n)$ with the structure of a poset by comparison of corner sum functions.  
M.~Fortin studied ${\sf PA}(n)$, showing that it is the  Dedekind-MacNeille completion of ${\sf P}(n)$ \cite[Section~6]{fortin2008macneille}.  Here, partial permutation matrices are identified with \emph{partial injective functions}. The poset structure  defined by corner sum matrices agrees with the extended Bruhat order defined by L.~E.~Renner in \cite{renner2005linear}.

\begin{lemma}
\label{lemma:cancompletion}
 Every $A\in {\sf PA}(n)$ has a canonical completion to $\widetilde{A}\in {\sf ASM}(N)$, with $n\leq N\leq 2n$.  
\end{lemma}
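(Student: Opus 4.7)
The plan is to append extra rows and columns on the right and below $A$, each containing a single $+1$, so that every row and column of the enlarged matrix sums to $1$.

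First, I would observe that if $A\in{\sf PA}(n)$ has $k_R$ rows summing to $0$ and $k_C$ columns summing to $0$, then counting the entries of $A$ in two ways gives $n-k_R = \sum_{i,j}a_{ij} = n-k_C$, so $k_R=k_C=:k$. Since $0\le k\le n$, setting $N := n+k$ yields the required bound $n\le N\le 2n$.

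Next, let $i_1<\cdots<i_k$ be the row indices with row-sum $0$ and $j_1<\cdots<j_k$ the column indices with column-sum $0$. I would define $\widetilde A$ to be the $N\times N$ matrix whose top-left $n\times n$ block is $A$ and whose remaining entries are all zero except for $\widetilde A(i_\ell,\, n+\ell)=1$ and $\widetilde A(n+\ell,\, j_\ell)=1$ for each $\ell=1,\ldots,k$. Because the placement rule only uses the canonical left-to-right ordering of deficient rows and columns, $\widetilde A$ depends only on $A$, which is the sense of ``canonical''.

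Finally, I would verify \ref{item:A1} and \ref{item:A2} for $\widetilde A$. The row/column sums of $\widetilde A$ are immediate: each deficient row $i_\ell$ gains exactly one $+1$, each deficient column $j_\ell$ gains exactly one $+1$, and each newly adjoined row or column contains a single $+1$; rows and columns of $A$ that already summed to $1$ are untouched. For the alternating sign condition: the new rows $n+1,\ldots,n+k$ and new columns $n+1,\ldots,n+k$ each carry a unique nonzero entry, so alternation is automatic. For the extended deficient row $i_\ell$, the original entries form an alternating $\pm 1$ sequence that (if nonempty) begins with $+1$ by the definition of a partial ASM and sums to $0$, hence must terminate in $-1$; appending a $+1$ at column $n+\ell$ therefore preserves alternation. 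The argument for deficient columns is symmetric.

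The ``hard part'' is essentially trivial: the only substantive observation is the parity fact that an alternating $\pm 1$ sequence which starts with $+1$ and sums to $0$ must end with $-1$, so the appended $+1$ does not violate \ref{item:A1}. Everything else is bookkeeping on row and column sums.
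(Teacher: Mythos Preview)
Your proposal is correct and matches the paper's construction exactly: append one new column for each deficient row (in order) and one new row for each deficient column (in order), with a single $+1$ in the appropriate position. Your write-up is in fact more careful than the paper's, which asserts \ref{item:A1} ``by construction'' without spelling out the parity observation, and deduces squareness a posteriori rather than proving $k_R=k_C$ up front.
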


\begin{proof}
The construction is similar to the one in for partial permutations found in \cite[Proposition~15.8]{miller2004combinatorial}.  Starting from the top row of $A$, if sum of row $i$ is zero, append a new column to $A$ with a 1 in the $i$th row.  Continue in this way from top to bottom.  Then starting from the leftmost column, if column $j$ sums to zero, add a new row with a 1 in position $j$.  Let $\widetilde{A}$ be the matrix obtained by this procedure.

By construction, $\widetilde{A}$ satisfies \ref{item:A1}; nonzero entries alternate in sign along rows and columns.  Also,  the entries of within each row and column of $\widetilde{A}$ sum to 1, so  \ref{item:A2} holds.  As such, the sum of all entries in $A$ counts the total number of rows, as well as the number of columns.  So $\widetilde{A}$ is square.   At most $n$ columns and rows were added.  So $\tilde A\in {\sf ASM}(N)$ for some $ n\leq N\leq 2n$.  
\end{proof}

\begin{example}
If $A=
\left(\begin{array}{cccc}
 0 & 0 &  0\\
 0&1&0\\
 1&-1&0
\end{array}\right)$ then
$\widetilde{A}=
\left(\begin{array}{ccc|cc}
 0 & 0 & 0 & 1& 0\\
 0 & 1 & 0&0&0\\
 1&-1&0&0&1\\\hline 
0 & 1 & 0 & 0&0\\
0 & 0 & 1 & 0&0
\end{array}\right)$.
Since the sum of the entries of $A$ is $1$, $N=2n-1=5$. \qed
\end{example}
For $w\in {\sf P}(n)$ we define the {\bf length} of $w$ to be $\ell(w):=\ell(\widetilde{w})$.  Similarly, we define the diagram $D(A):=D(\widetilde{A})$.  By construction, $D(A)$ is contained in the $n\times n$ grid.
\begin{lemma}
\label{lemma:rankpartialorder}
 $r_{A}\geq r_{B}$ if and only if $r_{\widetilde{A}}\geq r_{\widetilde{B}}$.
\end{lemma}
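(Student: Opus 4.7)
The $(\Leftarrow)$ direction is immediate from the construction: the canonical completion only appends rows and columns below and to the right of $A$, so the top-left $n \times n$ block of $\widetilde A$ is $A$ itself, giving $r_{\widetilde A}(i,j) = r_A(i,j)$ for $1 \leq i, j \leq n$ (and similarly for $B$). An inequality $r_{\widetilde A} \geq r_{\widetilde B}$ in a common ambient ${\sf ASM}(M)$ restricts to $r_A \geq r_B$ on this block.

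For $(\Rightarrow)$, my plan is to derive an explicit closed form for $r_{\widetilde A}(i,j)$ in terms of the entries of $r_A$, and then observe that each local expression is non-decreasing in the relevant $r_A$-values. Set $n_A := n - r_A(n,n)$, so $\widetilde A \in {\sf ASM}(N_A)$ with $N_A = n + n_A$. Since $r_A \geq r_B$ forces $r_A(n,n) \geq r_B(n,n)$, we have $N_A \leq N_B$, so I first embed $\widetilde A$ into ${\sf ASM}(N_B)$ via $\iota^{N_B - N_A}$ to place both $\widetilde A$ and $\widetilde B$ in a common ambient.

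The key computation is to trace the construction: the $k$-th added column carries its unique $1$ in the $k$-th row of $A$ that was row-deficient (ordered top to bottom), and the $\ell$-th added row carries its $1$ in the $\ell$-th column of $A$ that was column-deficient (ordered left to right). Since the number of row-deficient indices $\leq i$ equals $i - r_A(i,n)$, and the number of column-deficient indices $\leq j$ equals $j - r_A(n,j)$, careful bookkeeping produces a four-region formula on $[N_A]\times[N_A]$:
\[r_{\widetilde A}(i,j) = \begin{cases} r_A(i,j) & i, j \leq n, \\ r_A(i,n) + \min(j-n,\, i - r_A(i,n)) & i \leq n,\, n \leq j \leq N_A, \\ r_A(n,j) + \min(i-n,\, j - r_A(n,j)) & j \leq n,\, n \leq i \leq N_A, \\ r_A(n,n) + (i-n) + (j-n) & n \leq i, j \leq N_A. \end{cases}\]
Each case is non-decreasing in its $r_A$-inputs — for instance, $\phi(r) := r + \min(j-n,\, i-r)$ equals either $r+(j-n)$ or $i$ depending on which term of the $\min$ is active, both non-decreasing in $r$ — so $r_A \geq r_B$ implies $r_{\widetilde A}(i,j) \geq r_{\widetilde B}(i,j)$ on $[N_A]\times [N_A]$.

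For $(i,j)$ outside the $N_A \times N_A$ block but inside $[N_B]\times[N_B]$, the embedding gives $r_{\iota^{N_B-N_A}(\widetilde A)}(i,j) = \min(i,j)$, the maximal possible value for any corner sum of an $N_B \times N_B$ ASM, and therefore $\geq r_{\widetilde B}(i,j)$ automatically. The main obstacle is not conceptual but rather the careful case bookkeeping — verifying that the four regional formulas agree on their overlaps, correctly matching the construction of $\widetilde A$, and handling the ambient-size mismatch between $\widetilde A$ and $\widetilde B$; the monotonicity itself is transparent once the formula is in hand.
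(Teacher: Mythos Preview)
Your proof is correct, and it takes a genuinely different route from the paper's argument. The paper observes that the diagram (and hence the essential set) of $\widetilde{A}$ lies entirely in the top-left $n\times n$ block; therefore on $\mathcal Ess(\widetilde{A})$ the hypothesis $r_A\geq r_B$ gives $r_{\widetilde{B}}(i,j)\leq r_{\widetilde{A}}(i,j)$, and Lemma~\ref{lemma:bigrassranktest} then forces each biGrassmannian in ${\tt biGr}(\widetilde{A})$ below $\widetilde{B}$, so $\widetilde{A}\leq\widetilde{B}$. This is a two-line argument once the essential-set and biGrassmannian machinery of Section~\ref{subsection:corner} is in place.

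Your approach is instead a direct, self-contained computation: you work out $r_{\widetilde{A}}$ explicitly in four regions and check that each regional expression is monotone in its $r_A$-inputs. I checked your formulas and they are correct, including the region-4 formula (the bottom-right $(N_A-n)\times(N_A-n)$ block of $\widetilde{A}$ is zero, so the corner sum there decomposes additively as you implicitly use). You also handle the size mismatch $N_A\leq N_B$ cleanly, which the paper's proof leaves implicit in its ambient ${\sf ASM}(\infty)$ framework. The trade-off is clear: the paper's argument is shorter and more conceptual but leans on Proposition~\ref{prop:bigrassList} and the unproved assertion $D(\widetilde{A})\subseteq[n]\times[n]$, while yours is longer and more bookkeeping-heavy but requires nothing beyond the construction of $\widetilde{A}$ itself and produces a closed form for $r_{\widetilde{A}}$ that could be of independent use.
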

\begin{proof}
If $r_{\widetilde{A}}\geq r_{\widetilde{B}}$ it is immediate that $r_{A}\geq r_{B}$.

Now assume that  $r_{A}\geq r_{B}$.  By construction, the essential set of both $\widetilde{A}$ and $\widetilde{B}$ is contained in the first $n$ rows and columns.  So for any $(i,j)\in \mathcal Ess(\widetilde{A})$, we have $r_{\widetilde{B}}(i,j)\leq r_{\widetilde{A}}(i,j)$ and therefore $[i,j,r_{\widetilde{A}}(i,j)]_b\leq B$.  Then $\widetilde{B}$ is an upper bound to ${\tt biGr}(A)$ and so $A\leq B$ which implies $r_{\widetilde{A}}\geq r_{\widetilde{B}}$.
\end{proof}

Taking the inclusion of $\widetilde{A}$ into ${\sf ASM}(2n)$ is an order embedding ${\sf PA}(n)\hookrightarrow {\sf ASM}(2n)$.
As such, we may study the order on  ${\sf PA}(n)$ by identifying each partial ASM with its image under the above inclusion.

A {\bf partial biGrassmannian} is an element $b\in{\sf P}(n)$ so that $\widetilde{b}\in \mathcal S_{2n}$ is biGrassmannian.  Again, these are indexed by triples $(i,j,r)$ but we omit condition \ref{item:B3}.  Write $[i,j,r_{ij}]_b$ for the partial biGrassmannian in ${\sf P}(n)$.  By \cite{fortin2008macneille}, these are the basic elements of ${\sf PA}(n)$.

Notice, that restrictions of honest ASMs to northwest submatrices produce partial ASMs.  Take $A\in {\sf ASM}(N)$.  Then if $n\leq N$, we have $A_{[n],[n]}\in {\sf PA}(n)$.  Notice $A\leq  B$ implies $A_{[n],[n]}\leq B_{[n],[n]}$.  However, the converse certainly does not hold.  However, in the case $A=\widetilde{A}_{[n],[n]}$, we do have $A \leq B$ whenever $A_{[n],[n]}= B_{[n],[n]}$.  This follows since $\mathcal Ess(A)\subseteq n\times n$ and so $u\leq B$ for all $u\in {\tt biGr}(A)$.

\section{Subword complexes and prism tableaux}

\subsection{Simplicial complexes}

 Recall that $\mathbb P(S)$ denotes the power set of $S$.  
A {\bf simplicial complex} $\Delta$ is a subset of $\mathbb P([N])$ so that whenever $f\in \Delta$ and $f'\subseteq f$, we have $f'\in \Delta$.  An element $f\in \Delta$ is called a {\bf face}.   The {\bf dimension} of $f$ is $\dim(f)=|f|-1$.   Write \[\dim(\Delta)=\max\{\dim(f):f\in \Delta\}.\] If $f\in \Delta$, the {\bf codimension} of $f$ is ${\rm codim}(f)=\dim(\Delta)-\dim(f)$.
The set of faces of $\Delta$ ordered by inclusion form a poset.  Let 
\begin{equation}
\label{eqn:facet}
F(\Delta)={\tt MAX}(\Delta)
\end{equation}
 denote the set of {\bf facets} of $\Delta$, i.e. the maximal faces.
 Then define 
\begin{equation}
F_{\tt max}(\Delta)=\{f\in\Delta: {\rm codim}(f)=0\}.
 \end{equation}
Necessarily, $F_{\tt max}(\Delta)\subseteq F(\Delta)$.  When this containment is an equality, $\Delta$ is called {\bf pure}.

 Given two simplicial complexes $\Delta_1,\Delta_2 \subseteq \mathbb P([N])$, we may refer without ambiguity to the intersection (or union) of $\Delta_1$ and $\Delta_2$;  it is precisely their intersection (or union) as sets.  A straightforward verification shows that $\Delta_1\cap \Delta_2$ and $\Delta_1\cup \Delta_2$ are themselves simplicial complexes.  

\begin{lemma}
\label{lemma:facetoverlays}
Fix simplicial complexes  $\Delta_1,\ldots,\Delta_k\subseteq \mathbb P([N])$.  Let $\Delta=\Delta_1\cap\ldots\cap \Delta_k$.    Then \[F(\Delta)\subseteq \{f_1\cap\ldots \cap f_k:f_i\in F(\Delta_i)\}.\]
\end{lemma}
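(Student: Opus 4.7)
The plan is a short maximality argument combining the hereditary property of each $\Delta_i$ with the fact that every face of a finite simplicial complex sits inside some facet. Take an arbitrary $f\in F(\Delta)$. Since $f\in\Delta=\Delta_1\cap\cdots\cap\Delta_k$, in particular $f\in\Delta_i$ for each $i$, and because $\Delta_i$ is finite, $f$ extends to a facet $f_i\in F(\Delta_i)$ with $f\subseteq f_i$. Taking the intersection gives
\[
f\;\subseteq\; f_1\cap\cdots\cap f_k.
\]

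The key observation is then that $f_1\cap\cdots\cap f_k$ itself lies in $\Delta$. Indeed, for each $i$ the intersection is a subset of $f_i\in\Delta_i$, and since $\Delta_i$ is a simplicial complex it is closed under taking subsets; hence $f_1\cap\cdots\cap f_k\in\Delta_i$ for every $i$, and therefore $f_1\cap\cdots\cap f_k\in \Delta_1\cap\cdots\cap\Delta_k=\Delta$.

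Now I would finish by invoking maximality. Both $f$ and $f_1\cap\cdots\cap f_k$ belong to $\Delta$, with $f\subseteq f_1\cap\cdots\cap f_k$, and $f$ is maximal in $\Delta$ by hypothesis, so equality must hold:
\[
f \;=\; f_1\cap\cdots\cap f_k,
\]
which exhibits $f$ as an element of $\{f_1\cap\cdots\cap f_k:f_i\in F(\Delta_i)\}$, proving the claimed containment.

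I do not anticipate a real obstacle: the only ingredients are (i) every face of a finite simplicial complex is contained in a maximal face, and (ii) the hereditary/downward-closed property used to see that the intersection of the chosen facets is again a face of each $\Delta_i$. Both are immediate from the definitions recalled just before the lemma, and no use of ASM-specific structure is needed.
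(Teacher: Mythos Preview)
Your argument is correct and essentially identical to the paper's own proof: pick $f\in F(\Delta)$, extend it to a facet $f_i\in F(\Delta_i)$ in each component, observe that $f_1\cap\cdots\cap f_k$ lies in $\Delta$ by the hereditary property, and conclude equality from maximality of $f$.
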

\begin{proof}
Fix $f\in F(\Delta)\subseteq \Delta$.  Then $f\in \Delta_i$ for all $i$.    
For each $i$, there exists some $f_i\in F(\Delta_i)$ such that $f\subseteq f_i$.  Therefore, 
\begin{equation}
f\subseteq  f_1\cap \ldots \cap f_k\subseteq f_i \text{ for all } i=1,\ldots, k.
\end{equation}
Then $ f_1\cap \ldots \cap f_k\in \Delta_i$ for all $i$.  As such, 
\begin{equation}
\label{eqn:facetcontainedsubset}
f\subseteq f_1\cap \ldots \cap f_k\in \Delta_1\cap  \ldots \cap \Delta_k=\Delta.
\end{equation}
Since $f\in F(\Delta)$, the containment in (\ref{eqn:facetcontainedsubset}) is actually an equality.
\end{proof}

Let $\Bbbk[\mathbf z]=\Bbbk[z_1,\ldots,z_N]$.  Given $\mathbf v=(v_1,\ldots,v_N)\in \mathbb N^{N}$, write $\mathbf z^{\mathbf v}:=\prod_{i=1}^Nz_i^{v_i}$.  If $\mathbf v\in \{0,1\}^N$, then $\mathbf z^{\mathbf v}$ is a {\bf square-free monomial}.   An ideal  is called a {\bf square-free monomial ideal} if it has a generating set of square-free monomials.
Stanley-Reisner theory describes the correspondence between square-free monomial ideals in $\Bbbk[\mathbf z]$ and simplicial complexes $\Delta\subseteq \mathbb P([N])$.  
We give a brief overview.  For more background, see \cite[Chapter 1]{miller2004combinatorial}.  

Notice square-free monomials in $\Bbbk[\mathbb z]$ correspond to faces in $\mathbb P([N])$.  Given $f\in \mathbb P([N])$, write $\displaystyle \mathbf z^f=\prod_{i\in f} z_i$.  
\begin{definition} 
\label{def:SRComplex}
The {\bf Stanley-Reisner ideal} of $\Delta$ is
   \[I_\Delta=\langle \mathbf z^f:f\not \in \Delta\rangle.\] The quotient $\Bbbk[\mathbf z]/I_{\Delta}$ is called the {\bf Stanley-Reisner ring} of $\Delta$.
\end{definition}
Write $\mathfrak m_f=\langle z_i:i\in f \rangle$ and let $\overline{f}=[N]-f$.
\begin{theorem}{{\cite[Theorem 1.7]{miller2004combinatorial}}}
\label{thm:bijSR}
The map $\Delta\mapsto  I_\Delta$ is a bijection between square-free monomial ideals in $\Bbbk[\mathbf z]$ and simplicial complexes $\Delta\subseteq \mathbb P([N])$.  $I_\Delta$ can be expressed as an intersection of monomial prime ideals
\begin{equation}
\label{eqn:primedecomp}
I_\Delta=\bigcap_{f\in \Delta}\mathfrak m_{\overline{f}}.
\end{equation}
\end{theorem}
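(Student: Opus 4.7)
The plan is to verify the bijection in two parts, then establish the prime decomposition in the second sentence of the theorem by a direct double-containment argument.

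First, I would check that the map $\Delta \mapsto I_\Delta$ is well-defined: every generator $\mathbf z^f$ with $f \notin \Delta$ is square-free by construction, so $I_\Delta$ is a square-free monomial ideal. To invert the map, I would set
\[\Delta_I := \{f \subseteq [N] : \mathbf z^f \notin I\}\]
for any square-free monomial ideal $I$, and verify that $\Delta_I$ is a simplicial complex. This is immediate: if $g \subseteq f$ and $\mathbf z^g \in I$, then $\mathbf z^f = \mathbf z^g \cdot \mathbf z^{f \setminus g} \in I$, so by contrapositive $\Delta_I$ is closed under taking subsets. I would then verify that the two constructions are mutually inverse. The equality $\Delta_{I_\Delta} = \Delta$ follows from the fact that $\Delta$ is downward closed: $\mathbf z^f \in I_\Delta$ holds precisely when $\mathbf z^f$ is divisible by some generator $\mathbf z^g$ with $g \notin \Delta$, and this occurs iff $f$ itself is not in $\Delta$. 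For $I_{\Delta_I} = I$, one direction is obvious since the generators of $I_{\Delta_I}$ lie in $I$; the other uses that $I$ is generated by square-free monomials together with the definition of $\Delta_I$.

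For the prime decomposition, I would argue both containments. For the forward containment, fix any generator $\mathbf z^g$ of $I_\Delta$ (so $g \notin \Delta$) and any $f \in \Delta$. Since $\Delta$ is downward closed and $g \notin \Delta$, we cannot have $g \subseteq f$, so there exists $i \in g \setminus f = g \cap \overline f$, and therefore $\mathbf z^g \in \mathfrak m_{\overline f}$. Hence $I_\Delta \subseteq \mathfrak m_{\overline f}$ for every $f \in \Delta$, giving $I_\Delta \subseteq \bigcap_{f \in \Delta} \mathfrak m_{\overline f}$.

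For the reverse containment, take any monomial $\mathbf z^g$ in the intersection. For each $f \in \Delta$ there exists $i_f \in g \cap \overline f$, equivalently $g \not\subseteq f$. In particular, taking $f = g$ would be a contradiction, so $g \notin \Delta$, and hence $\mathbf z^g \in I_\Delta$. Since a monomial ideal is determined by the monomials it contains, this shows $\bigcap_{f \in \Delta} \mathfrak m_{\overline f} \subseteq I_\Delta$. The only mild subtlety is reducing an arbitrary element of the intersection to its monomial components, which is standard because each $\mathfrak m_{\overline f}$ is a monomial ideal and the intersection of monomial ideals is monomial (generated by the componentwise maxima of exponent vectors). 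I expect this monomial-reduction step to be the only place where care is needed; the rest of the argument is purely combinatorial bookkeeping about subsets of $[N]$.
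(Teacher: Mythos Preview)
Your argument is correct and is essentially the standard proof of this fact as found in Miller--Sturmfels. However, the paper does not actually prove this theorem: it is quoted verbatim as \cite[Theorem~1.7]{miller2004combinatorial} and used as a black box, with only the inverse map $\Delta(I)=\{f\subseteq[N]:\mathbf z^f\notin I\}$ recorded afterward. So there is nothing to compare against---you have supplied a full proof where the paper deliberately defers to the reference.

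One small point of presentation: in the reverse containment you write ``take any monomial $\mathbf z^g$'' and then treat $g$ as a subset of $[N]$, invoking ``taking $f=g$ would be a contradiction.'' Strictly speaking an arbitrary monomial in the intersection need not be square-free, so $g$ should be the \emph{support} of the exponent vector rather than the exponent vector itself. The argument still goes through: if $\mathbf z^{\mathbf v}\in\bigcap_{f\in\Delta}\mathfrak m_{\overline f}$ and $g=\{i:v_i>0\}$, then $g\not\subseteq f$ for every $f\in\Delta$, hence $g\notin\Delta$, hence $\mathbf z^g\in I_\Delta$ divides $\mathbf z^{\mathbf v}$. You clearly have this in mind, but making the support explicit would tighten the write-up.
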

Explicitly, the inverse  map takes a square-free monomial ideal $I$ to 
\[\Delta(I):=\{f\subseteq [N]: \mathbf z^f \not \in I\}.\]  Given a square-free monomial ideal $I$, we say $\Delta(I)$ is the {\bf Stanley-Reisner complex} associated to $I$.

The following lemma is straightforward from Definition~\ref{def:SRComplex}, but we give the details.
\begin{lemma}
\label{lemma:SRComplex}
Let $\{I_\alpha\}_{\alpha\in \mathcal A}$ be a set square-free monomial ideals with $I_\alpha\subseteq \Bbbk[z_1,\ldots, z_N]$. Then 
$\Delta(\sum_{\alpha\in \mathcal A}I_\alpha)=\bigcap_{\alpha\in \mathcal A} \Delta(I_\alpha)$.
\end{lemma}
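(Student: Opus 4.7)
The plan is to unwind the notation on both sides and reduce the set equality to a statement about membership of the single monomial $\mathbf z^f$ in the ideals involved. By the definition of $\Delta(\cdot)$ recorded just below Theorem~\ref{thm:bijSR}, we have $f \in \Delta(\sum_{\alpha} I_\alpha)$ iff $\mathbf z^f \notin \sum_{\alpha} I_\alpha$, and $f \in \bigcap_{\alpha} \Delta(I_\alpha)$ iff $\mathbf z^f \notin I_\alpha$ for every $\alpha \in \mathcal A$. So it suffices to show that $\mathbf z^f$ lies in $\sum_\alpha I_\alpha$ if and only if it lies in some single $I_\alpha$.

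First I would dispatch the inclusion $\Delta(\sum_\alpha I_\alpha) \subseteq \bigcap_\alpha \Delta(I_\alpha)$, which is immediate from $I_\alpha \subseteq \sum_\alpha I_\alpha$: a monomial outside the larger ideal is automatically outside each summand. The real content is the opposite inclusion. For that, the key input is the standard fact that a sum of monomial ideals is itself a monomial ideal, with monomial generating set equal to the union of the monomial generating sets of the summands, together with the combinatorial membership criterion that for a monomial ideal $J$ with monomial generators $\{m_i\}$, a monomial $m$ lies in $J$ precisely when some $m_i$ divides $m$.

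Applied to $J = \sum_\alpha I_\alpha$, this membership criterion says: if $\mathbf z^f \in \sum_\alpha I_\alpha$, then $\mathbf z^f$ is divisible by some monomial generator of some particular $I_\alpha$, and hence $\mathbf z^f \in I_\alpha$ for that $\alpha$. Contrapositively, if $\mathbf z^f \notin I_\alpha$ for every $\alpha$, then $\mathbf z^f \notin \sum_\alpha I_\alpha$, so $f \in \Delta(\sum_\alpha I_\alpha)$, which supplies the missing inclusion and completes the proof.

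There is no serious obstacle here; the whole argument is a routine consequence of the dictionary between square-free monomial ideals and simplicial complexes from Theorem~\ref{thm:bijSR}, plus the monomial-divisibility characterization of membership in a monomial ideal. The only point to be careful about is not to confuse a sum of ideals with a set-theoretic union of generators in a different ring, which is automatically handled once one observes that all $I_\alpha$ sit inside the common ambient ring $\Bbbk[z_1,\ldots,z_N]$.
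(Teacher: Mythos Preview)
Your proof is correct and follows essentially the same approach as the paper: both reduce the set equality to the statement that a monomial lies in $\sum_\alpha I_\alpha$ if and only if it lies in some $I_\alpha$, and both justify this via the fact that the sum is a monomial ideal generated by the union of the generators. The paper presents this as a single chain of equivalences rather than two inclusions, but the content is identical.
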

\begin{proof}
A generating set for $\sum_{\alpha\in \mathcal A}I_\alpha$ can be obtained by concatenation of the generating sets for the $I_\alpha$'s.  So it is a square-free monomial ideal.  Notice a monomial $m\in \sum_{\alpha\in\mathcal A}I_\alpha$ if and only if $m\in I_\alpha$ for some $\alpha \in \mathcal A$.

Assume $f\subseteq[N]$.  Then,
\begin{align*}
f\in\Delta(\sum_{\alpha\in \mathcal A}I_\alpha)&\iff {\mathbf z}^f\not \in \sum_{\alpha\in \mathcal A}I_\alpha\\
&\iff {\mathbf z}^f \not \in I_\alpha \text{ for all } \alpha\in \mathcal A\\
&\iff f\in  \Delta(I_\alpha) \text{ for all } \alpha\in \mathcal A\\
&\iff f\in \bigcap_{\alpha\in \mathcal A} \Delta(I_\alpha). \qedhere
\end{align*}
\end{proof}

\subsection{Subword complexes}

We now recall the definition of a subword complex, following \cite{knutson2004subword}.  Let $\Pi$ be a Coxeter group minimally generated by simple reflections $\Sigma$.  A {\bf word} is an ordered list $Q=(s_{1},\ldots,s_{m})$ of simple reflections in $\Sigma$.   A {\bf subword} of $Q$ is an ordered subsequence $P=(s_{i_i},\ldots, s_{i_k})$.  Subwords of $Q$ are naturally identified with faces of the simplicial complex $\mathbb P([m])$.

A word  $P$ {\bf represents} $w\in \Pi$ if $w=s_{1}\cdots s_{m}$ and $\ell(w)=m$, i.e. the ordered product is a reduced expression for $w$. We say $Q$ {\bf contains} $w$ if $Q$ has a subword which represents $w$.  Then define the {\bf subword complex} \[\Delta(Q,w)=\{Q-P:P \text{ contains } w\}.\]  
We will abbreviate $\mathcal F_P:=Q-P$.   Immediately by definition, 
\begin{equation}
\mathcal F_P\subseteq \mathcal F_{P'} \quad \text{ if and only if } \quad P\supseteq P'.
\end{equation}
A well known characterization of the Bruhat order on $\mathcal S_n$ is via subwords: 
\begin{equation}
\label{eqn:bruhatword}
\text{$w\geq v$  if and only if some (and hence every) reduced word for $w$ contains $v$}.
\end{equation}  See \cite[Section 5.10]{humphreys1992reflection}.  This is equivalent to the order on $\mathcal S_n$ as defined in (\ref{eqn:asmposetdef}). See \cite[Theorem 2.1.5]{bjorner2006combinatorics} for a proof.

The {\bf Demazure algebra} of $(\Pi,\Sigma)$ over a ring $R$ is freely generated by $\{e_w:w\in \Pi\}$ with multiplication given by 
\[e_we_s=
\begin{cases}
e_{ws} & \text { if } \ell(ws)>\ell(w)\\
e_w &\text{ if } \ell(ws)<\ell(w).
\end{cases}
\]
If $Q=s_1 \ldots s_k$, the {\bf Demazure product} $\delta(Q)$ is defined by the product $e_{s_1}\cdots e_{s_m}=e_{\delta(Q)}$.  The faces of $\Delta(Q,w)$ have a natural description in terms of the Demazure product.
\begin{lemma}[{\cite[Lemma 3.4]{knutson2004subword}}]
\label{lemma:dez}
$\delta(P)\geq w$ if an only if $P$ contains $w$.  
\end{lemma}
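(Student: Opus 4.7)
The plan is to prove both implications by induction on the length $m$ of the word $P$, using the lifting property of Bruhat order. Write $P = P_0 \cdot s$ for some simple reflection $s$, so that by definition of the Demazure product
\[
\delta(P) = \begin{cases} \delta(P_0)\, s & \text{if } \delta(P_0)\, s > \delta(P_0), \\ \delta(P_0) & \text{if } \delta(P_0)\, s < \delta(P_0).\end{cases}
\]
The base case $P = \emptyset$ is immediate: $\delta(P) = 1$ and $P$ contains only the identity.

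For the $(\Leftarrow)$ direction, I would actually prove the stronger monotonicity statement: for every subword $P'$ of $P$, we have $\delta(P) \geq \delta(P')$ in Bruhat order. Granting this, if $P'$ is a reduced subword representing $w$, then $\delta(P') = w$ and we are done. The induction step splits according to whether $P'$ is a subword of $P_0$ (easy, since $\delta(P) \geq \delta(P_0) \geq \delta(P')$) or $P' = P'_0 \cdot s$. In the second case one further breaks up on the two possibilities for $\delta(P_0)\,s$ versus $\delta(P_0)$ and the two possibilities for $\delta(P'_0)\,s$ versus $\delta(P'_0)$, and in each branch one of the standard lifting moves applies (for instance: if $u \leq v$ with both $us > u$ and $vs > v$, then $us \leq vs$; and if $u \leq v$ with $vs < v$, then $us \leq v$ automatically).

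For the $(\Rightarrow)$ direction, I assume $\delta(P) \geq w$ and produce a reduced subword of $P$ representing $w$. If $\delta(P) = \delta(P_0)$, the induction hypothesis applied to $P_0$ gives a reduced subword for $w$ already inside $P_0 \subseteq P$. If $\delta(P) = \delta(P_0)\,s > \delta(P_0)$, I separate on whether $s$ is a descent of $w$. If $ws < w$: the lifting property gives $\delta(P_0) = \delta(P)\,s \geq ws$, so induction yields a reduced subword of $P_0$ representing $ws$, and appending the final $s$ of $P$ to it yields a subword of $P$ of length $\ell(ws)+1 = \ell(w)$ whose product is $w$, hence reduced. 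If $ws > w$: the complementary lifting statement (if $u \geq w$, $us < u$, and $ws > w$, then $us \geq w$, proved by the subword characterization of Bruhat order applied to a reduced expression of $u$ ending in $s$) gives $\delta(P_0) \geq w$, and induction on $P_0$ finishes.

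The main obstacle is the bookkeeping in the $(\Rightarrow)$ direction: one must carefully verify in the $ws < w$ subcase that appending $s$ to the reduced subword for $ws$ produces a subword for $w$ that is still \emph{reduced}, and one must invoke exactly the right variant of the lifting lemma in the $ws > w$ subcase. Both of these ultimately rest on the subword characterization of Bruhat order in (\ref{eqn:bruhatword}), so no new combinatorial input beyond lifting is required.
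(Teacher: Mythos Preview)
Your argument is correct. The monotonicity of the Demazure product under taking subwords and the induction via the lifting property are exactly the right tools, and each of the case splits you describe goes through with the standard form of lifting (in particular your ``complementary'' variant in the $ws>w$ subcase is the usual consequence of the subword characterization you cite).

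As for comparison: the paper does not supply its own proof of this lemma at all---it simply quotes it as \cite[Lemma~3.4]{knutson2004subword}. So there is nothing to compare against here; you have written out a self-contained proof where the paper is content to cite. Your approach is essentially the standard one and matches in spirit what one finds in the original reference.
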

Then
$
\Delta(Q,w)=\{\mathcal F_P:\delta(P)\geq w\}.$
This motivates the following definition.  Let $\Pi=\mathcal S_n$ and $\Sigma=\{(i,i+1):i=1,\ldots,n-1\}$ be the set of simple transpositions. Given $A\in {\sf ASM}(n)$, define 
\begin{equation}
\Delta(Q,A)=\{\mathcal F_P:\delta(P)\geq A\}.
\end{equation}
This is itself a simplicial complex, but need not be a subword complex.
  Immediately from the definition, 
\begin{equation}
\label{eqn:subwordcont}
\text{if $A\geq B$ then $\Delta(Q,A)\subseteq \Delta(Q,B)$.}
\end{equation}
We will show that $\Delta(Q,A)$ is a union of subword complexes.  In particular, if $A\in {\sf ASM}(m)$ with $m\leq n$ each of these subword complexes correspond to permutations in $\mathcal S_m$.

\begin{lemma}
\label{lemma:boundinrightplace}
Suppose $w\in \mathcal S_\infty$ is an upper bound to $\{w_1,\ldots,w_k\}\subseteq \mathcal S_m$.  Then there exists 
$w'\in S_m$ so that $ \vee \{w_1,\ldots,w_k\}\leq w'\leq w$.
\end{lemma}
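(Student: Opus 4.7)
The plan is to use the Demazure-product/subword-complex machinery recalled earlier in the paper. Pick $n$ large enough that $w$ has a representative in $\mathcal S_n$; the $w_i$ are then in $\mathcal S_m\subseteq\mathcal S_n$ automatically. Fix a reduced word $Q=(s_{a_1},\ldots,s_{a_\ell})$ for $w$, and let $Q^{<m}$ be the ordered subword of $Q$ obtained by deleting every letter $s_{a_j}$ with $a_j\geq m$. Take
\[
w':=\delta(Q^{<m}).
\]
It then suffices to verify that $w'\in\mathcal S_m$, that $w'\geq w_i$ for every $i$, and that $w'\leq w$.

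For the first, $Q^{<m}$ is a word in the generators $s_1,\ldots,s_{m-1}$, so every partial Demazure product stays in the subgroup these generate, which is the standard embedded copy of $\mathcal S_m$ in $\mathcal S_n$. For the third, I would use that $\delta$ is monotone under word inclusion: $P\subseteq Q$ implies $\delta(P)\leq\delta(Q)$. This drops out of two applications of Lemma~\ref{lemma:dez} -- the lemma first gives that $P$ contains $\delta(P)$, so the larger word $Q$ also contains $\delta(P)$, and applying the lemma in reverse yields $\delta(Q)\geq\delta(P)$. Applied to $Q^{<m}\subseteq Q$ this gives $w'\leq\delta(Q)=w$.

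The remaining verification $w'\geq w_i$ is where the real content lies. I would invoke the standard parabolic reduced-word fact: for any $u\in\mathcal S_m\subseteq\mathcal S_n$, every reduced expression of $u$ in $\mathcal S_n$ uses only $s_1,\ldots,s_{m-1}$. A short direct argument works here: the inversions of $u$ all lie in $[m]\times[m]$, so $s_j\not\leq u$ in Bruhat order for any $j\geq m$, and hence by the subword characterization (\ref{eqn:bruhatword}) no letter of a reduced expression for $u$ can lie outside $\{s_1,\ldots,s_{m-1}\}$. Given this, $w\geq w_i$ together with (\ref{eqn:bruhatword}) produces a reduced subword of $Q$ representing $w_i$, and by the parabolic fact this subword sits entirely inside $Q^{<m}$. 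Hence $Q^{<m}$ contains $w_i$, and Lemma~\ref{lemma:dez} delivers $w'=\delta(Q^{<m})\geq w_i$. Taking joins yields $w'\geq\vee\{w_1,\ldots,w_k\}$.

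The main obstacle is precisely the parabolic reduced-word fact used in the last step; once that is in hand, the rest reduces to direct bookkeeping with the Demazure algebra, so I expect the proof to close cleanly.
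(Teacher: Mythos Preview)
Your proof is correct and uses essentially the same approach as the paper: take a reduced word for $w$, extract a subword lying in $\langle s_1,\ldots,s_{m-1}\rangle$ that still contains each $w_i$, and set $w'$ to be its Demazure product. The only difference is which subword you extract---you take the full $Q^{<m}$ of all small-index letters, while the paper takes $P'=\bigcup_i P_i$, the union of chosen reduced subwords for the $w_i$; since each $P_i$ already lies in $Q^{<m}$, both choices work and the verifications are identical (including the parabolic reduced-word fact, which the paper invokes in its last sentence without further comment).
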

\begin{proof}
Let $P$ be a reduced word for $w$.  By (\ref{eqn:bruhatword}), $P$ contains a subword, $P_i$ which represents $w_i$ for each $i$.  Let $P'=\bigcup_{i=1}^k P_i\subseteq P$.  By Lemma~\ref{lemma:dez}, since $P'$ contains each of the $w_i$'s, we have $\delta(P')\geq w_i$ for all $i$.  So $\delta(P')$ is an upper bound to  $\{w_1,\ldots,w_k\}$. 
Again, by Lemma~\ref{lemma:dez}, $P'$ contains $\delta(P')$ and hence $P$ contains $\delta(P')$.  So \[w=\delta(P)\geq \delta(P').\]
Finally, the word $\mathcal P'$ uses only simple transpositions from $S_m$, so $\delta(P')\in \mathcal S_m$.
\end{proof}
As a corollary, we obtain the following.
\begin{corollary}
\label{cor:perminfty}
\begin{enumerate}
\item ${\tt Perm}(A)={\tt MIN}(\{w\in \mathcal S_\infty:w\geq A\}).$
\item ${\tt Perm}(A)={\tt MIN}(\{w\in  {\sf P}(n):w\geq A\}).$
\end{enumerate}
\end{corollary}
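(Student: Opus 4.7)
The plan is to deduce both equalities from Lemma~\ref{lemma:boundinrightplace} together with the identification $A=\vee{\tt biGr}(A)$ from (\ref{eqn:bigrjoin}). The finite antichain ${\tt biGr}(A)$ lies inside $\mathcal B_n\subseteq \mathcal S_n$, so Lemma~\ref{lemma:boundinrightplace} lets us convert any upper bound of $A$ living in a larger ambient poset into one already in $\mathcal S_n$. That conversion is the only engine we need.

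For part (1), suppose $w\in \mathcal S_\infty$ with $w\ge A$. Then $w$ is an upper bound for ${\tt biGr}(A)\subseteq\mathcal S_n$, so Lemma~\ref{lemma:boundinrightplace} produces $w'\in \mathcal S_n$ with $A=\vee{\tt biGr}(A)\le w'\le w$. Both inclusions now follow immediately. If $w\in{\tt MIN}(\{u\in\mathcal S_\infty:u\ge A\})$, minimality forces $w'=w$, hence $w\in \mathcal S_n$, and $w$ is obviously minimal once we restrict to $\mathcal S_n$, so $w\in{\tt Perm}(A)$. Conversely, if $w\in {\tt Perm}(A)$ and some $v\in \mathcal S_\infty$ satisfies $A\le v\le w$, applying Lemma~\ref{lemma:boundinrightplace} to $v$ in place of $w$ yields $v'\in \mathcal S_n$ with $A\le v'\le v$; minimality of $w$ in $\mathcal S_n$ forces $v'=w$, and the sandwich $w=v'\le v\le w$ gives $v=w$. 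Hence $w$ is minimal in $\mathcal S_\infty$.

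For part (2), the additional ingredient is Lemma~\ref{lemma:rankpartialorder}, which records that $u\le v$ in ${\sf PA}(n)$ if and only if $\widetilde u\le \widetilde v$ in the ambient ${\sf ASM}(N)$. Since $A$ is already an honest ASM, $\widetilde A=A$. Given $w\in {\sf P}(n)$ with $w\ge A$, the canonical completion gives $\widetilde w\in \mathcal S_N$ (with $N\le 2n$) satisfying $\widetilde w\ge A$ in $\mathcal S_\infty$. By part~(1), or equivalently by applying Lemma~\ref{lemma:boundinrightplace} directly to ${\tt biGr}(A)$ with $\widetilde w$ as upper bound, there is $v\in\mathcal S_n\subseteq {\sf P}(n)$ with $A\le v\le \widetilde w$. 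Since $v$ is an honest permutation, $\widetilde v=v$, so Lemma~\ref{lemma:rankpartialorder} promotes $v\le \widetilde w$ to $v\le w$ in ${\sf P}(n)$. The same two-direction minimality argument as in part~(1) --- with $\mathcal S_\infty$ replaced by ${\sf P}(n)$ --- then identifies ${\tt Perm}(A)$ with ${\tt MIN}(\{w\in {\sf P}(n):w\ge A\})$.

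The main obstacle is purely the bookkeeping of the three order embeddings $\mathcal S_n\hookrightarrow \mathcal S_\infty$, ${\sf ASM}(n)\hookrightarrow {\sf ASM}(\infty)$, and ${\sf PA}(n)\hookrightarrow {\sf ASM}(2n)$, together with the verification that the reductions carried out above genuinely translate inequalities between the right posets. All of the real combinatorial content is already packaged in Lemma~\ref{lemma:boundinrightplace} and Lemma~\ref{lemma:rankpartialorder}, so beyond that bookkeeping no new machinery is required.
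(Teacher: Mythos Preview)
Your proof is correct and follows essentially the same approach as the paper: both parts reduce to Lemma~\ref{lemma:boundinrightplace} applied to ${\tt biGr}(A)\subseteq\mathcal S_n$, with part~(2) additionally invoking Lemma~\ref{lemma:rankpartialorder} to transfer inequalities between ${\sf PA}(n)$ and the completion. The paper's own proof is terser---it stops once the key reduction ``for every upper bound there is a smaller one in $\mathcal S_n$'' is established and leaves the two-direction minimality deduction implicit---whereas you spell that deduction out explicitly; but the ingredients and route are the same.
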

\begin{proof}
\noindent (1) This is immediate from Lemma~\ref{lemma:boundinrightplace}.

\noindent (2) Fix $w\in {\sf P}(n)$.  Consider the inclusions $\widetilde{A},\widetilde{w}\in{\sf ASM}(2n)$.  Then $\widetilde{w}\geq A$ is an upper bound to ${\tt biGr}(\widetilde{A})={\tt biGr}(A)$.  Applying Lemma~\ref{lemma:boundinrightplace}, we obtain $w'\in \mathcal S_{n}$ with $\widetilde{A}\leq w'\leq \widetilde{w}$.  Since $w'\in \mathcal S_n$,we may take its representative $\widetilde{w'}\in {\sf ASM}(2n)$.  So $\widetilde{A}\leq \widetilde{w'}\leq \widetilde{w}$.  
Applying Lemma~\ref{lemma:rankpartialorder}, we see that
$A\leq w'\leq w$.  So the statement follows.
\end{proof}

\begin{proposition}
\label{prop:interesctunion}
Fix a word $Q$ and $A\in {\sf ASM}(n)$.  
\begin{enumerate}
\item $\displaystyle\Delta(Q,A)=\bigcup_{w\in {\tt Perm}(A)} \Delta(Q,w)$.
\item If $A=\vee\{A_1,\ldots,A_k\}$ then \[\Delta(Q,A)=\bigcap_{i=1}^k \Delta(Q,A_i).\]
\item $F(\Delta(Q,A))=\{\mathcal F_P: P \text{ is a reduced expression for some } w\in{\tt Perm}(A)\}.$
\end{enumerate} 
\end{proposition}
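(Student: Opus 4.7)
The plan is to work directly from the defining description $\Delta(Q,A)=\{\mathcal F_P:\delta(P)\geq A\}$ and leverage two inputs already at hand: the containment principle (\ref{eqn:subwordcont}) and the characterization of ${\tt Perm}(A)$ from Corollary~\ref{cor:perminfty}. I would dispatch (1) and (2) first, since they are essentially unwindings of definitions, and then do the real work in (3).

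For part (1), the inclusion $(\supseteq)$ is immediate: for each $w\in {\tt Perm}(A)$ we have $w\geq A$, and so $\Delta(Q,w)\subseteq \Delta(Q,A)$ by (\ref{eqn:subwordcont}). For $(\subseteq)$, suppose $\mathcal F_P\in \Delta(Q,A)$, so that $\delta(P)\geq A$ with $\delta(P)\in \mathcal S_\infty$. Corollary~\ref{cor:perminfty}(1) identifies ${\tt Perm}(A)$ with ${\tt MIN}(\{w\in \mathcal S_\infty:w\geq A\})$, so there is some $w\in {\tt Perm}(A)$ with $w\leq \delta(P)$; applying Lemma~\ref{lemma:dez} then places $\mathcal F_P$ in $\Delta(Q,w)$. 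Part (2) is even quicker: since $A\geq A_i$ for each $i$, (\ref{eqn:subwordcont}) gives the inclusion $\Delta(Q,A)\subseteq \bigcap_i \Delta(Q,A_i)$; and if $\delta(P)\geq A_i$ for every $i$, then $\delta(P)$ is an upper bound of $\{A_1,\ldots,A_k\}$ and hence $\geq A$, giving the reverse inclusion.

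The substance lives in (3). I would first invoke the standard fact that for a fixed permutation $v$, the facets of the subword complex $\Delta(Q,v)$ are exactly the $\mathcal F_P$ where $P$ is a reduced expression for $v$ in $Q$; this follows from Lemma~\ref{lemma:dez} together with the fact that $|P|\geq \ell(\delta(P))$, with equality iff $P$ is reduced. Given this, the forward inclusion is easy: a facet $\mathcal F\in F(\Delta(Q,A))$ lies in some $\Delta(Q,w)$ for $w\in {\tt Perm}(A)$ by part (1), and maximality of $\mathcal F$ in the larger complex forces it to be a facet of $\Delta(Q,w)$ as well. For the reverse inclusion, let $P$ be a reduced expression for some $w\in {\tt Perm}(A)$ and suppose $\mathcal F_P\subseteq \mathcal F_{P'}\in \Delta(Q,A)$, i.e.\ $P'\subseteq P$ and $\delta(P')\geq A$. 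Since Demazure products are monotone under inclusion of subwords (a consequence of Lemma~\ref{lemma:dez}), $w=\delta(P)\geq \delta(P')$. Thus $A\leq \delta(P')\leq w$; applying Corollary~\ref{cor:perminfty}(1) to $\delta(P')$ yields some $w'\in {\tt Perm}(A)$ with $w'\leq \delta(P')\leq w$, and since ${\tt Perm}(A)$ consists of minimal elements of $\{u\geq A\}$ and is therefore an antichain, $w'=w$. Hence $\delta(P')=w$, which forces $|P'|\geq \ell(w)=|P|$, and combined with $P'\subseteq P$ this yields $P'=P$.

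The main obstacle, as I see it, is exactly the last step of (3): verifying that facets arising from reduced expressions of different elements of ${\tt Perm}(A)$ remain maximal after assembling them into the union $\Delta(Q,A)$, rather than being absorbed by a larger face coming from a different $w$. The antichain property of ${\tt Perm}(A)$ is what prevents this collapse, and it is the only nontrivial input needed; everything else is monotonicity of $\delta$ and the standard facet description of subword complexes.
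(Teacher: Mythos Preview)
Your proposal is correct and follows essentially the same approach as the paper. Parts (1) and (2) match the paper's argument almost verbatim; for part (3) the paper only writes out the reverse inclusion (the forward inclusion being implicit from part (1) together with the standard facet description of $\Delta(Q,w)$), and it deduces $\delta(P')=w$ directly from minimality of $w$ in $\{v\in\mathcal S_n:v\geq A\}$ rather than routing through the antichain property, but this is the same reasoning packaged slightly differently.
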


\begin{proof}
\noindent (1) Since $w\geq A$, applying (\ref{eqn:subwordcont}) we have
\[\Delta(Q,A)\supseteq\bigcup_{w\in {\tt Perm}(A)} \Delta(Q,w).\]
If $\mathcal F_P\in \Delta(Q,A)$ then $\delta(P)\geq A$.  By (\ref{eqn:PermA}) and Corollary~\ref{cor:perminfty},  there exists  $w\in {\tt Perm}(A)$ so that $\delta(P)\geq w\geq A$.  Then $\mathcal F_P\in \Delta(Q,w)$.  Therefore, $\displaystyle\Delta(Q,A)\subseteq \bigcup_{w\in {\tt Perm}(A)} \Delta(Q,w)$.

\noindent (2) Since $A\geq A_i$, applying  (\ref{eqn:subwordcont}), we have that $\Delta(Q,A)\subseteq \Delta(Q,A_i)$ for each $i=1,\ldots, k$.  So \[\Delta(Q,A)\subseteq \bigcap_{i=1}^k \Delta(Q,A_i).\]
If $\mathcal F_P\in \Delta(Q,A_i)$ for all $i$, then $\delta(P)\geq A_i$ for all $i$.  Since $A=\vee\{A_1,\ldots,A_k\}$ we must have $\delta(P)\geq A$.  So $\mathcal F_P\in \Delta(Q,A)$.

\noindent (3) Suppose $P$ is a reduced expression for some $w\in{\tt Perm}(A)$.  If $P$ contains $P'$ and $\mathcal F_{P'}\in \Delta(Q,A)$ then $w=\delta(P)\geq \delta(P')\geq A$.  By (\ref{eqn:PermA}), $\delta(P')=w$.  Since $P$ is a reduced expression for $w$, we have $P=P'$.  Therefore $\mathcal F_P\in \Delta(Q,w)$.
\end{proof}

For the rest of this section, we focus on a fixed ambient word $Q$.  Write $s_i$ for the simple transposition $(i,i+1)\in \mathcal S_{2n}$.  Define the {\bf square word} \[Q_{n\times n}=s_n \, s_{n-1} \, \ldots \, s_1 \quad s_{n+1} \,  s_{n} \, \ldots \, s_2 \quad \ldots \quad s_{2n-1} \, s_{2n-2} \, \ldots s_n.\]  
Order the boxes of the $n\times n$ grid by reading along rows from right to left, starting with the top row and working down to the bottom.  This ordering identifies each letter of $Q_{n\times n}$ with a cell in the $n\times n$ grid.

A {\bf plus diagram} is a subset of the $n\times n$ grid.  We indicate $(i,j)$ is in the plus diagram by marking its position in the grid with a $+$.  The identification of the letters in  $Q_{n\times n}$ with the grid defines a natural bijection between subwords of $Q_{n\times n}$ and plus diagrams.  As such,
we freely identify each word with its plus diagram. 

\begin{example}
When $n=3$, we have 
\[Q_{n\times n}= s_3 s_2 s_1 s_4 s_3 s_2 s_5 s_4 s_3.\]
Below, we label the entries of the $3\times 3$ grid with their corresponding simple transpositions.  We also give a subword of $Q_{3\times 3}$ and  its corresponding plus diagram.  
\[\begin{array}{ccc}
s_1 &s_2 &s_3 \\
 s_2 &s_3 &s_4 \\
  s_3 &s_4 &s_5
\end{array}\hspace{3em} 
 s_3 \, -\, -\,  -\, s_3\, s_2 \,- \,-\, \,-  \hspace{3em} 
 \begin{array}{ccc}
\cdot &\cdot &+ \\
+&+&\cdot \\
  \cdot &\cdot &\cdot
\end{array}  \]
Notice that $P$ is not a reduced expression, $s_3s_3s_2=s_2$.  Therefore, it is not a facet of  $\Delta(Q_{n\times n},A)$ for any $A\in {\sf ASM}(n)$. \qed
\end{example}

For brevity, write  $\Delta_A:=\Delta(Q_{n\times n},A)$.  
Assign $\mathcal F_P$ the weight
\[{\tt wt}(\mathcal F_P)=\prod_{i=1}^n x_i^{n_i} \quad \text{ where} \quad n_i=|\{j:(i,j)\in P\}|.\]
When $w\in \mathcal S_n$, the complex $\Delta_w$ is a pure simplicial complex.  Its facets are in immediate bijection with pipe dreams (also known as RC-graphs). 
\begin{theorem}[\cite{Fomin.Kirillov,Bergeron.Billey,knutson2005grobner}]
\label{thm:schubequation}
\begin{equation}
\label{eqn:schubertsubwordexpression}
\mathfrak S_w=\sum_{\mathcal F_P\in  F(\Delta_w)}{\tt wt}(\mathcal F_P).\end{equation}
\end{theorem}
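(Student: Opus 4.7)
The plan is downward induction on Bruhat order, using the standard characterization of $\{\mathfrak{S}_w\}_{w \in \mathcal{S}_n}$ by the top value $\mathfrak{S}_{w_0} = x_1^{n-1}x_2^{n-2}\cdots x_{n-1}$ together with the divided difference recursion $\mathfrak{S}_{ws_i} = \partial_i \mathfrak{S}_w$ whenever $\ell(ws_i) < \ell(w)$. Writing $R(w)$ for the right-hand side of \eqref{eqn:schubertsubwordexpression}, it then suffices to show that $R$ satisfies the same initial condition and the same recursion.

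For the base case, I would note that position $(i,j)$ of the grid carries the generator $s_{i+j-1}$, so a reduced subword of $Q_{n\times n}$ representing $w_0 \in \mathcal{S}_n$ cannot contain any plus with $i+j > n$. A direct argument via the commutation and braid relations then shows the staircase pipe dream $P_0 = \{(i,j) : i+j \leq n\}$ is the unique reduced subword for $w_0$. Since row $i$ of $P_0$ contains $n-i$ plusses, we obtain ${\tt wt}(\mathcal{F}_{P_0}) = x_1^{n-1}x_2^{n-2}\cdots x_{n-1} = \mathfrak{S}_{w_0}$.

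For the inductive step, fix $w$ with $w(i) > w(i+1)$ and set $v = ws_i$; the task is to verify $\partial_i R(w) = R(v)$. This is the main obstacle. My plan follows Bergeron--Billey: group pipe dreams for $w$ into equivalence classes under \emph{ladder moves} on rows $i$ and $i+1$. On each class the monomial weights form an interval of shape $\{x_i^a x_{i+1}^b, x_i^{a-1}x_{i+1}^{b+1}, \ldots\}$ scaled by a common factor, and $\partial_i$ telescopes this interval to the weight of a single associated pipe dream for $v$. The delicate point is exhibiting the resulting bijection between ladder-classes for $w$ and the set of pipe dreams for $v$, together with checking weight-compatibility; this reduces to a local analysis of how pluses can propagate across rows $i$ and $i+1$ near the rightmost plus in row $i$.

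A geometric alternative is available: identify $R(w)$ with the multidegree of the Stanley--Reisner scheme of $\Delta_w$. By the antidiagonal Gr\"obner degeneration of Knutson--Miller (the $w \in \mathcal{S}_n$ case of Proposition~\ref{proposition:maindegeneration}), this scheme is the flat initial degeneration of the matrix Schubert variety $X_w$. Since multidegrees are preserved under flat degeneration and $\mathfrak{S}_w = \mathcal{C}(X_w;\mathbf{x})$, the identity follows at once. The combinatorial route fits the theme of this paper, while the geometric route parallels the logic that will be used for the main theorem.
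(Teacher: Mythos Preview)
The paper does not prove Theorem~\ref{thm:schubequation} at all: it is stated with attribution to \cite{Fomin.Kirillov,Bergeron.Billey,knutson2005grobner} and used as a black box. The only surrounding remark is that $\Delta_w$ is pure and that its facets are in immediate bijection with pipe dreams (RC-graphs). So there is no ``paper's own proof'' to compare against.

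That said, your sketch is a fair outline of how the cited references actually establish the result. Your combinatorial route is essentially the Bergeron--Billey argument: verify the top monomial for $w_0$ and then check compatibility with divided differences via a local analysis on rows $i$ and $i+1$. Your base case is correct in this setup (position $(i,j)$ carries $s_{i+j-1}$, so the staircase $\{(i,j):i+j\le n\}$ is forced and unique for $w_0$). The inductive step you describe is genuinely the hard part; what you call ``ladder moves'' and the telescoping under $\partial_i$ is exactly the content of the Bergeron--Billey bijection, and you are right that the delicate point is the bijection between classes for $w$ and pipe dreams for $v$.

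One caution on your geometric alternative: within the original literature, the identity $\mathcal C(X_w;\mathbf x)=\mathfrak S_w$ in \cite{knutson2005grobner} is itself proved by combining the antidiagonal Gr\"obner degeneration with the pipe-dream formula, so invoking it to deduce the pipe-dream formula is circular unless you have an independent proof of $\mathcal C(X_w;\mathbf x)=\mathfrak S_w$ (e.g.\ via the degeneracy-locus or equivariant-cohomology arguments of \cite{fulton1992flags,feher2003schur}). In the present paper both facts are simply imported from the literature, so the issue does not arise here, but if you are writing a self-contained proof you should be explicit about which input you are assuming.
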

For permutations, $\Delta_w$ is the Stanley-Reisner complex of a degeneration of the Schubert determinantal ideal $I_w$ \cite[Theorem~B]{knutson2005grobner}.  The same holds for $I_A$ and $\Delta_A$, see Section~\ref{subsection:asmdet} for details.

As a consequence of Theorem~\ref{thm:schubequation}, we have the following corollary.
\begin{corollary}
\label{cor:schubsumposet}
\begin{enumerate}
\item
$\displaystyle \sum_{w\in{\tt Perm}(A)} \mathfrak S_w=\sum_{\mathcal F_P\in F(\Delta_A)} {\tt wt}(\mathcal F_P).$
\item $\displaystyle \sum_{w\in{\tt MinPerm}(A)} \mathfrak S_w=\sum_{\mathcal F_P\in F_{\tt max}(\Delta_A)} {\tt wt}(\mathcal F_P).$
\end{enumerate}
\end{corollary}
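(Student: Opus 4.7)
The plan is to combine Theorem~\ref{thm:schubequation}, which expands each individual Schubert polynomial as a weighted sum over facets of a subword complex, with Proposition~\ref{prop:interesctunion}(3), which identifies the facets of $\Delta_A$ with reduced expressions for permutations in ${\tt Perm}(A)$. Both parts of the corollary will follow by carefully matching up the two sides.

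For part (1), I would begin by applying Theorem~\ref{thm:schubequation} term-by-term on the left, yielding
\[
\sum_{w\in{\tt Perm}(A)}\mathfrak S_w \;=\; \sum_{w\in{\tt Perm}(A)}\;\sum_{\mathcal F_P\in F(\Delta_w)}{\tt wt}(\mathcal F_P).
\]
By Proposition~\ref{prop:interesctunion}(3), the set $F(\Delta_A)$ is exactly $\{\mathcal F_P : P \text{ a reduced expression in } Q_{n\times n} \text{ for some } w\in{\tt Perm}(A)\}$. The only thing to verify is disjointness of the outer sum, i.e.\ that a reduced word $P\subseteq Q_{n\times n}$ cannot represent two distinct elements of ${\tt Perm}(A)$; this is immediate, since for any reduced expression the ordered product of its letters (equivalently, $\delta(P)$) uniquely determines the permutation.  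Thus the right-hand side of the displayed equation matches the right-hand side of (1).

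For part (2), the key observation is a dimension count. The ambient word $Q_{n\times n}$ has exactly $n^2$ letters, so any facet $\mathcal F_P=Q_{n\times n}-P$ arising from a reduced expression $P$ for $w\in{\tt Perm}(A)$ has $|\mathcal F_P| = n^2 - \ell(w)$. Hence $\dim(\Delta_A) = n^2 - 1 - \min\{\ell(w): w\in{\tt Perm}(A)\}$. By Corollary~\ref{cor:perminfty}(1) combined with the definition (\ref{eqn:degA}) of ${\tt deg}(A)$, this minimum is ${\tt deg}(A)$, so a facet $\mathcal F_P\in F(\Delta_A)$ lies in $F_{\tt max}(\Delta_A)$ if and only if $P$ is a reduced expression for some $w\in{\tt MinPerm}(A)$. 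Applying Theorem~\ref{thm:schubequation} again and the same disjointness argument as in (1), part (2) follows.

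There is no real obstacle here: once Theorem~\ref{thm:schubequation} and Proposition~\ref{prop:interesctunion} are in hand, the corollary is essentially bookkeeping. The only point requiring mild care is the identification $\min\{\ell(w):w\in{\tt Perm}(A)\}={\tt deg}(A)$, which invokes Corollary~\ref{cor:perminfty} to ensure no permutation in $\mathcal S_\infty\setminus\mathcal S_n$ beats the length bound achieved inside $\mathcal S_n$.
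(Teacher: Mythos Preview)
Your proposal is correct and follows essentially the same route as the paper: use Proposition~\ref{prop:interesctunion}(3) to identify $F(\Delta_A)$ with the disjoint union $\bigsqcup_{w\in{\tt Perm}(A)} F(\Delta_w)$, then invoke Theorem~\ref{thm:schubequation}. Your explicit dimension count for part (2) just unpacks what the paper states in one line as $F_{\tt max}(\Delta_A)=\bigsqcup_{w\in{\tt MinPerm}(A)} F(\Delta_w)$; the appeal to Corollary~\ref{cor:perminfty} is harmless but unnecessary, since both ${\tt deg}(A)$ and ${\tt Perm}(A)$ are already defined relative to $\mathcal S_n$.
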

\begin{proof}
\noindent (1)  By Proposition~\ref{prop:interesctunion},  
\begin{equation}
\label{eqn:facetdisjoint}
F(\Delta_A)=\bigcup_{w\in {\tt Perm}(A)} F(\Delta_w).
\end{equation}
   $\mathcal F_P$ is a facet of $\Delta_w$ if and only if $P$ represents $w$.  A subword can represent at most one permutation, so the union in (\ref{eqn:facetdisjoint}) is disjoint.
Therefore, applying (\ref{eqn:schubertsubwordexpression}), we have \[ \sum_{w\in{\tt Perm}(A)} \mathfrak S_w=\sum_{w\in {\tt Perm}(A)} \sum_{\mathcal F_P\in F(\Delta_w)} {\tt wt}(\mathcal F_P)=\sum_{\mathcal F_P\in F(\Delta_A)} {\tt wt}(\mathcal F_P).\]

\noindent (2)  Observe that \[F_{\tt max}(\Delta_A)=\bigcup_{w\in {\tt MinPerm}(A)} F(\Delta_w).\]  Again this union is disjoint.  So the result follows.
\end{proof}

\subsection{Proof of Theorem~\ref{theorem:schubertsum}}
\label{subsection:biject}

Take $T\in {\tt RSSYT}(\lambda,d)$ and write $T_{ij}$ for the entry of $T$ which is in the $i$th row and $j$th column in the ambient $\mathbb Z_{> 0}\times \mathbb Z_{> 0}$ grid (as in (\ref{eqn:tableauposition})).  Define a plus diagram $ P_T$ by placing a plus in position $(T_{ij},i+j-T_{ij})$ for each label in $T$.  This in turn defines a map $T\mapsto \mathcal F_{P_T}$.  Define $\Phi_{\lambda,d}(T)=\mathcal F_{P_T}$. 

\begin{proposition}
\label{prop:tabbiject}
$\Phi_{\lambda,d}:{\tt RSSYT}(\lambda,d)\rightarrow  F(\Delta_{u_{\lambda,d}})$ is a bijection.
\end{proposition}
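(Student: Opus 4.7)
The plan is to verify $\Phi_{\lambda,d}$ is well-defined and then construct an inverse by processing pluses antidiagonal by antidiagonal.

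\emph{Step 1 (well-definedness).} First I check that the pluses of $P_T$ lie in distinct cells. If $(i,j)\neq (i',j')$ in $\lambda$ produced the same plus, then $T_{ij}=T_{i'j'}$ and $i+j=i'+j'$; taking $i<i'$ (so $j>j'$), the row weak decrease yields $T_{i,j'}\geq T_{ij}$, and the strict column decrease from bottom to top yields $T_{i',j'}>T_{i,j'}$, contradicting $T_{ij}=T_{i',j'}$. Next, the letter of $Q_{n\times n}$ at cell $(a,b)$ is $s_{a+b-1}$, so the plus for box $(i,j)$ contributes $s_{i+j-1}$; hence $P_T$ has $|\lambda|=\ell(u_{\lambda,d})$ pluses and uses the multiset $\{s_{i+j-1}:(i,j)\in\lambda\}$, which coincides with the multiset of letters in any reduced word for $u_{\lambda,d}$ (this multiset is a braid- and commutation-invariant of $u_{\lambda,d}$, and can be verified on one standard reduced word for a Grassmannian). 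The real work is to check that reading these letters in the square-word order (rows top to bottom, each row right to left) gives a \emph{reduced} expression for $u_{\lambda,d}$. I plan to show this by induction on $|\lambda|$: peel off the label in an innermost corner of $T$ and reduce to $u_{\lambda',d}$, where $\lambda'$ is $\lambda$ minus a corner box.

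\emph{Step 2 (injectivity).} Pluses on grid antidiagonal $a+b=k$ come exactly from tableau boxes on antidiagonal $i+j=k$ of $\lambda$, and the rows of those pluses are exactly the corresponding values $T_{ij}$. The RSSYT axioms force these values to be strictly increasing as $i$ increases along the antidiagonal (since $T_{i+1,j-1}>T_{i,j-1}\geq T_{ij}$). So sorting the plus-rows on antidiagonal $k$ and matching them in order to the boxes of $\lambda$ on that antidiagonal (ordered by increasing $i$) uniquely reconstructs $T$ from $P_T$.

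\emph{Step 3 (surjectivity).} Given a facet $\mathcal F_P\in F(\Delta_{u_{\lambda,d}})$, apply the inverse rule from Step~2 to produce a candidate tableau $T$. The antidiagonal counts match because $P$ is reduced and the multiset $\{s_{i+j-1}:(i,j)\in\lambda\}$ is determined by $u_{\lambda,d}$. I then verify the RSSYT conditions on $T$: along any antidiagonal of $\lambda$ the strict increase in $i$ is immediate from the sorting step, while the remaining weak-row-decrease and strict-column-decrease conditions linking neighboring antidiagonals are forced by reducedness, since a violation would yield a repeated letter $s_k s_k$ in the reading (after suitable commutations) and contradict reducedness.

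The main obstacle is Step~1's verification that $P_T$ reads as a reduced word for $u_{\lambda,d}$, together with its counterpart in Step~3 showing the reconstructed $T$ is RSSYT. Both are incarnations of the classical identification of reduced pipe dreams for a Grassmannian permutation with RSSYT of the associated shape (the Fomin--Kirillov/Billey--Jockusch--Stanley correspondence, translated into antidiagonal coordinates). The cleanest route is probably to build the bijection inductively, interleaving Steps~1 and~3 by appending a plus at the end of the square word in correspondence with adding an entry at an innermost corner of the tableau.
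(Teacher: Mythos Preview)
The paper does not actually prove this proposition: immediately after stating it, the author writes ``Proposition~\ref{prop:tabbiject} is well known.  For a proof, see e.g.\ \cite[Proposition~5.3]{KMY}.'' So there is nothing to compare your argument against in the paper itself; you are attempting a self-contained proof where the paper simply appeals to the literature.

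Your outline is structurally sound and the ingredients are the right ones. The distinctness check in Step~1 is correct (and you are implicitly using $T_{ij}\le i$, which follows from strict column decrease and is worth stating). The antidiagonal-sorting reconstruction in Step~2 is the standard inverse and your justification is fine. However, the two places you flag as ``the main obstacle'' are genuinely the entire content of the proposition: showing that $P_T$ reads to a \emph{reduced} word for $u_{\lambda,d}$, and conversely that any reduced subword yields a filling satisfying the RSSYT inequalities. You sketch an induction (peel off a corner, reduce to $\lambda'$) but do not carry it out, and in Step~3 the claim that a violation of weak-row-decrease or strict-column-decrease produces an adjacent repeat $s_k s_k$ ``after suitable commutations'' is exactly the lemma that needs proof---one must argue carefully that the intervening letters all commute past. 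As written, your proposal is a correct roadmap rather than a proof; if you want it to stand on its own you must execute the induction, or else do what the paper does and cite the existing proofs of this bijection (Fomin--Kirillov, Bergeron--Billey, or the formulation in \cite{KMY}).
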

Proposition~\ref{prop:tabbiject} is well known.  For a proof, see e.g. \cite[Proposition~5.3]{KMY}.
Define 
\begin{equation}
\label{eqn:prismmap}
\Phi_{\boldsymbol \lambda,\mathbf d}:{\tt AllPrism}({\boldsymbol \lambda,\mathbf d})\rightarrow \Delta_{A_{\boldsymbol \lambda,\mathbf d}}
\end{equation}
where
\begin{equation}
\label{eqn:prismmapdef}
\Phi_{\boldsymbol \lambda,\mathbf d}(T^{(1)},\ldots, T^{(k)})=\Phi_{\lambda^{(1)},d_1}( T^{(1)})\cap \ldots \cap \Phi_{\lambda^{(k)},d_k}( T^{(k)}).
\end{equation}
 Equivalently, $\Phi_{\boldsymbol \lambda,\mathbf d}(T^{(1)},\ldots, T^{(k)})=\mathcal F_{P_{\mathcal T}}$ where $P_{\mathcal T}=\bigcup_{i=1}^k P_{T^{(i)}}$. By  part (2) of Proposition~\ref{prop:interesctunion}, $\Phi_{\boldsymbol \lambda,\mathbf d}$ is well defined. 
\begin{example}
Continuing Example~\ref{example:prism}, we have
\ytableausetup{boxsize=.95em, aligntableaux=center}
\[\mathcal T=\begin{tikzpicture}[x=1em,y=1em,baseline=4.2em]
\draw[step=1,gray!30,very thin] (0,1) grid (7,8); 
\node[] at (1.5,5){
\begin{ytableau}
\none\\
{\color{Rhodamine}1}\\
\none\\
{\color{Plum}1}{\color{blue}1}&{\color{Plum}1}\\
{\color{Plum}3}{\color{blue}2}&{\color{Plum}3}&{\color{Plum}2}\\
{\color{blue}6}&{\color{blue}3}\\
\end{ytableau}};
\end{tikzpicture} 
\hspace{2em}
 \mapsto \hspace{2em}
P_{\mathcal T}=\begin{array}{ccccccc}
\cdot &+ &\cdot &+ &+ &\cdot &\cdot \\
\cdot &\cdot &\cdot&+ &\cdot &+ &\cdot \\
\cdot &\cdot &+ &+ &+ &\cdot &\cdot \\
\cdot &\cdot &\cdot &\cdot &\cdot &\cdot &\cdot \\
\cdot &\cdot &\cdot &\cdot &\cdot &\cdot &\cdot \\
+ &\cdot &\cdot &\cdot &\cdot &\cdot &\cdot \\
\cdot &\cdot &\cdot &\cdot &\cdot &\cdot &\cdot 
\end{array} 
 \] \qed
\end{example}

\begin{lemma}
\label{lemma:weightpreserve}
$\Phi_{\boldsymbol \lambda,\mathbf d}$ is weight preserving.
\end{lemma}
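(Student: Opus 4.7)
The plan is to unpack both weight definitions and check they agree label by label, row by row. The essential observation is that the map $\Phi_{\lambda,d}$ sends a label $\ell$ in position $(i,j)$ of a tableau to a plus at position $(\ell,i+j-\ell)$ in the $n\times n$ grid, and the column $i+j-\ell$ depends only on $\ell$ and on the antidiagonal $i+j$ of the box, not on the individual box within that antidiagonal.

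First I would write $\mathcal T=(T^{(1)},\ldots,T^{(k)})\in{\tt AllPrism}(\boldsymbol\lambda,\mathbf d)$ and unwind the definition
\[
\Phi_{\boldsymbol\lambda,\mathbf d}(\mathcal T)=\mathcal F_{P_\mathcal T}\quad\text{with}\quad P_\mathcal T=\bigcup_{c=1}^k P_{T^{(c)}},
\]
where $P_{T^{(c)}}$ places a plus in position $(T^{(c)}_{ij},\,i+j-T^{(c)}_{ij})$ for every entry of $T^{(c)}$. Fix a row index $\ell\in[n]$; the number of pluses in row $\ell$ of $P_\mathcal T$ is, by definition, the exponent of $x_\ell$ in ${\tt wt}(\mathcal F_{P_\mathcal T})$.

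Next I would argue that this count equals the number of antidiagonals that contain an entry equal to $\ell$ somewhere in $\mathcal T$. In one direction, every label $\ell$ of some color $c$ at position $(i,j)$ contributes a plus in row $\ell$ and column $i+j-\ell$, and two labels equal to $\ell$ sitting in the same antidiagonal (so sharing the value of $i+j$) produce the same column index, hence the same plus. Thus distinct pluses in row $\ell$ of $P_\mathcal T$ are in bijection with distinct antidiagonal indices that carry a label $\ell$ in at least one component tableau. In the other direction, each antidiagonal containing a label $\ell$ (in any color) yields the corresponding plus, so the correspondence is onto.

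Concluding, the number of pluses in row $\ell$ of $P_\mathcal T$ equals the exponent of $x_\ell$ in ${\tt wt}(\mathcal T)$, which gives
\[
{\tt wt}(\mathcal F_{P_\mathcal T})=\prod_{\ell=1}^{n}x_\ell^{\,n_\ell}={\tt wt}(\mathcal T).
\]
The only subtlety is keeping the antidiagonal indexing consistent with the definition in the paper (boxes $(i,j)$ with a fixed value of $i+j$); everything else is a direct substitution. There is no essential obstacle — the lemma is really the statement that the plus-diagram construction collapses multiply-occurring labels in a single antidiagonal, and this collapsing is forced by the formula $(\ell,i+j-\ell)$.
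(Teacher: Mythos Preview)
Your proof is correct and follows essentially the same approach as the paper: both arguments reduce to the observation that a label $\ell$ at position $(i,j)$ is sent to a plus at $(\ell,i+j-\ell)$, so the pluses in row $\ell$ of $P_{\mathcal T}$ are indexed precisely by the distinct antidiagonals of $\mathcal T$ containing the label $\ell$. Your version is slightly more explicit about the bijection in both directions, but the content is the same.
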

\begin{proof}
The plus diagram $P_{T}$ has a plus in position $(i,j)$ if and only if there is some $a$ so that $T_{a,i+j-a}=i$.

Let $\mathcal F_P=\Phi_{\boldsymbol \lambda,\mathbf d}(\mathcal T)$.  Then $P=P_{T^{(1)}}\cup \ldots \cup P_{T^{(k)}}$.  Therefore,
$(i,j)\in P$ if and only if $(i,j)\in P_{T^{(\ell)}} \text{ for some } \ell\in [k]$.  As such, $(i,j)\in P$ if and only if the label $i$ appears in the $(i+j)$th antidiagonal of $\mathcal T$.  Therefore,
\begin{align*}
{\tt wt}(\mathcal F_P)=\prod_{(i,j)\in P} x_i =\prod_{i}x_i^{n_i} ={\tt wt}(\mathcal T). & \qedhere
\end{align*}
\end{proof}

 Notice by Lemma~\ref{lemma:facetoverlays} and Proposition~\ref{prop:tabbiject},
\begin{equation}
\label{eqn:facetsurject}
F(\Delta_{A_{\boldsymbol \lambda,\mathbf d}})\subseteq \Phi_{\boldsymbol \lambda,\mathbf d}({\tt AllPrism}(\boldsymbol \lambda,\mathbf d)).
\end{equation}
If $\mathcal T$ is minimal,  $\Phi_{\boldsymbol \lambda,\mathbf d}(\mathcal T)\in F_{\tt max}(\Delta_{A_{\boldsymbol \lambda,\mathbf d}}).$
This implies
\begin{equation}
\label{eqn:twodegrees}
{\tt deg}(\boldsymbol \lambda,\mathbf d)={\tt deg}(A_{\boldsymbol \lambda,\mathbf d}).
\end{equation}
For permutation matrices, ${\tt Perm}(w)=w$ and so ${\tt deg}(w)=\ell(w)$.  This shows the original definition for a minimal prism tableau given in \cite{weigandt2015prism} agrees with the definition stated here.

Call $\mathcal T$ {\bf facet} if $\Phi_{\boldsymbol \lambda,\mathbf d}(\mathcal T)\in  F(\Delta_A)$.  Let ${\tt Facet}(\boldsymbol \lambda,\mathbf d)\subseteq {\tt AllPrism}(\boldsymbol \lambda,\mathbf d)$ denote the set of facet prism tableaux.  
Write ${\tt StableFacet}(\boldsymbol \lambda,\mathbf d)\subseteq {\tt Facet}(\boldsymbol \lambda,\mathbf d)$ for the set of facet tableaux which have no unstable triples.
By (\ref{eqn:facetsurject}), 
\begin{equation}
{\tt Perm}({A_{\boldsymbol \lambda,\mathbf d}})=\{w:\Phi_{\boldsymbol \lambda,\mathbf d}(\mathcal T)\in \Delta_w \text{ for some } \mathcal T\in {\tt Facet}(\boldsymbol \lambda,\mathbf d)\}.
\end{equation}

\begin{example}
\label{example:facet}
Let $A$ be as in Example~\ref{example:noneqi}.  Set $\boldsymbol \lambda=((2),(2))$ and $\mathbf d=(1,2)$.  Notice that $\mathbb S(\boldsymbol \lambda,\mathbf d)$ is \emph{both} the parabolic and the biGrassmannian prism shape for $A$.  So $A_{\boldsymbol \lambda,\mathbf d}=A$.  
There are three prism fillings of $\mathbb S(\boldsymbol \lambda,\mathbf d)$, listed below.
\ytableausetup{boxsize=.95em, aligntableaux=top,nobaseline}
\[\mathcal T_1=
\begin{tikzpicture}[x=1em,y=1em,baseline=2.8em]
\draw[step=1,gray!30,very thin] (0,1) grid (4,5); 
\node[] at (1,4){
\begin{ytableau}
{\color{Plum} 1}&{\color{Plum} 1}\\
{\color{blue} 1}&{\color{blue} 1}
\end{ytableau}};
\end{tikzpicture}
\hspace{2em}
\mathcal T_2=
\begin{tikzpicture}[x=1em,y=1em,baseline=2.8em]
\draw[step=1,gray!30,very thin] (0,1) grid (4,5); 
\node[] at (1,4){
\begin{ytableau}
{\color{Plum} 1}&{\color{Plum} 1}\\
{\color{blue} 2}&{\color{blue} 1}
\end{ytableau}};
\end{tikzpicture}
\hspace{2em}
\mathcal T_3=
\begin{tikzpicture}[x=1em,y=1em,baseline=2.8em]
\draw[step=1,gray!30,very thin] (0,1) grid (4,5); 
\node[] at (1,4){
\begin{ytableau}
{\color{Plum} 1}&{\color{Plum} 1}\\
{\color{blue} 2}&{\color{blue} 2}
\end{ytableau}};
\end{tikzpicture}
\]
Only $\mathcal T_1$ is minimal, so ${\tt Prism}(\boldsymbol \lambda,\mathbf d)=\{\mathcal T_1\}$.  Therefore $\mathfrak A_{\boldsymbol \lambda,\mathbf d}={\tt wt}(\mathcal T_1)=x_1^3$.
The above prism tableaux correspond to the following plus diagrams.
\[ P_1=\begin{array}{cccc}
+ &+ &+ &\cdot\\
\cdot &\cdot &\cdot &\cdot\\
\cdot &\cdot &\cdot &\cdot\\
\cdot &\cdot &\cdot &\cdot
\end{array} 
\hspace{2em}
 P_2=\begin{array}{cccc}
+ &+ &+ &\cdot\\
+ &\cdot &\cdot &\cdot\\
\cdot &\cdot &\cdot &\cdot\\
\cdot &\cdot &\cdot &\cdot
\end{array} 
\hspace{2em}
 P_3=\begin{array}{cccc}
+ &+ &\cdot &\cdot\\
+ &+ &\cdot &\cdot\\
\cdot &\cdot &\cdot &\cdot\\
\cdot &\cdot &\cdot &\cdot
\end{array}
\]
Since $P_2\supsetneq P_1$, we have $\mathcal F_{P_2}\subsetneq \mathcal F_{P_1}$.  Therefore $\mathcal T_2$ is not a facet prism tableau. 
There are no plus diagrams in the image of $\Phi_{\boldsymbol \lambda,\mathbf d}$ which are strictly contained in $P_1$ or $P_3$, so by (\ref{eqn:facetsurject}), $\mathcal F_{P_1},\mathcal F_{P_3}\in F(\Delta_{A})$.  So $\mathcal T_1, \mathcal T_3\in {\tt Facet}(\boldsymbol \lambda, \mathbf d)$.

The word corresponding to $P_1$ is $s_3s_2s_1~=~4123$ and the word for $P_3$ is $s_2s_1s_3s_2=3412$.  Therefore ${\tt Perm}(A)=\{4123,3124\}$ and ${\tt MinPerm}(A)=\{4123\}$.  
\qed
\end{example}

\begin{theorem}
\label{thm:wtbijection}
\begin{enumerate}
\item $ F(\Delta_{A_{\boldsymbol \lambda,\mathbf d}})$  is in weight preserving bijection with ${\tt StableFacet}(\boldsymbol \lambda,\mathbf d)$.
\item The bijection in (1) restricts to a bijection between $ F_{\tt max}(\Delta_{A_{\boldsymbol \lambda,\mathbf d}})$ and ${\tt Prism}(\boldsymbol \lambda,\mathbf d)$.
\end{enumerate}
\end{theorem}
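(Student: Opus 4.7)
The map $\Phi_{\boldsymbol \lambda,\mathbf d}$ is weight-preserving by Lemma~\ref{lemma:weightpreserve}, and (\ref{eqn:facetsurject}) shows that its restriction to ${\tt Facet}(\boldsymbol \lambda,\mathbf d)$ surjects onto $F(\Delta_{A_{\boldsymbol \lambda,\mathbf d}})$. The plan for part (1) is to promote this to a bijection on ${\tt StableFacet}(\boldsymbol \lambda,\mathbf d)$. Part (2) will then be an immediate corollary: weight preservation together with (\ref{eqn:twodegrees}) implies that $\mathcal T$ is minimal exactly when $\Phi(\mathcal T)$ has maximum possible dimension in $\Delta_{A_{\boldsymbol \lambda,\mathbf d}}$, so restricting the bijection of (1) to minimal tableaux yields ${\tt Prism}(\boldsymbol \lambda,\mathbf d)\cong F_{\tt max}(\Delta_{A_{\boldsymbol \lambda,\mathbf d}})$.

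For surjectivity, the key observation is that an unstable-triple replacement preserves the plus diagram. Given an unstable triple $\{\ell_c,\ell_d,\ell'_e\}$ in antidiagonal $k$ of $\mathcal T$, let $\mathcal T'$ be obtained by raising $\ell_c$ to $\ell'_c$. The plus at $(\ell,k-\ell)$ is retained in $P_{\mathcal T'}$ through $\ell_d$, and the plus at $(\ell',k-\ell')$ is retained through $\ell'_e$ (and is now additionally contributed by the new $\ell'_c$); hence $P_{\mathcal T'}=P_{\mathcal T}$ and $\Phi(\mathcal T)=\Phi(\mathcal T')$. Since each replacement strictly increases the bounded integer $\sum_{c}\sum_{(a,b)\in\lambda^{(c)}}T^{(c)}_{a,b}$, iterating from any $\mathcal T\in{\tt Facet}(\boldsymbol \lambda,\mathbf d)$ terminates at a stable facet tableau $\mathcal T^{\mathrm{stab}}$ with the same image.

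For injectivity, suppose $\mathcal T,\mathcal T'\in{\tt StableFacet}(\boldsymbol \lambda,\mathbf d)$ are distinct with $\Phi(\mathcal T)=\Phi(\mathcal T')$; I plan to derive a contradiction by producing an unstable triple in one of them. Within $T^{(c)}$, the labels along antidiagonal $k$ strictly increase with row index (by (T1) and (T2) applied to adjacent diagonal cells), so the set $S_k^{(c)}(\mathcal T):=\{T^{(c)}_{a,k-a}\}$ determines the antidiagonal labels of $T^{(c)}$, while the common plus diagram fixes only the union $S_k=\bigcup_c S_k^{(c)}(\mathcal T)$. Choose the smallest $k$ on which the tableaux disagree for some color $c$, and select an exchanged pair $\ell<\ell'$ with $\ell\in S_k^{(c)}(\mathcal T)\setminus S_k^{(c)}(\mathcal T')$ and $\ell'\in S_k^{(c)}(\mathcal T')\setminus S_k^{(c)}(\mathcal T)$. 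Since both $\ell,\ell'\in S_k$, some $e\neq c$ carries $\ell'_e$ in antidiagonal $k$ of $\mathcal T$, and $\ell$ is witnessed in $\mathcal T'$ by some $d\neq c$. The central step is arguing, from the minimality of $k$ and the stability of $\mathcal T'$, that the same $d$ also carries $\ell$ in $\mathcal T$, producing the candidate unstable triple $\{\ell_c,\ell_d,\ell'_e\}$ in $\mathcal T$.

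The remaining and genuinely delicate step is verifying that raising $\ell_c$ to $\ell'_c$ in $T^{(c)}$ yields a valid reverse semistandard filling. The up-left neighbor inequalities at $(a,k-a)$ involve antidiagonal $k-1$, on which $T^{(c)}$ and $T'^{(c)}$ agree; since $T'^{(c)}$ already accommodates $\ell'$ at $(a,k-a)$, these conditions hold automatically. The down-right constraints lie in antidiagonal $k+1$, where the tableaux may still differ; this is where the main technical difficulty lies. A careful choice of the disagreement cell---for instance the one at the bottom-right boundary of $\lambda^{(c)}$ in antidiagonal $k$, so that one or both down-right neighbors lie outside $\lambda^{(c)}$---combined with a downward induction that propagates the stability hypothesis on $\mathcal T'$ across antidiagonals, should close the argument. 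This case analysis is the principal hurdle and is where the bulk of the combinatorial work resides.
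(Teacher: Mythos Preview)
Your surjectivity argument is fine and matches the paper in spirit: an unstable-triple replacement keeps the plus diagram fixed, and iterating terminates at a stable tableau in the same fiber.

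Your injectivity argument, however, has real gaps and is on a harder road than necessary. Two specific problems:

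\begin{itemize}
\item The claim that ``$T'^{(c)}$ already accommodates $\ell'$ at $(a,k-a)$'' is not justified. You know $\ell' \in S_k^{(c)}(\mathcal T')$, but its \emph{position} along antidiagonal $k$ in $T'^{(c)}$ depends on the other elements of $S_k^{(c)}(\mathcal T')$, which differ from those of $S_k^{(c)}(\mathcal T)$. So you cannot directly borrow the up-left compatibility from $T'^{(c)}$.
\item The ``central step'' that the color $d$ witnessing $\ell$ in $\mathcal T'$ also carries $\ell$ in $\mathcal T$ is asserted without argument. Minimality of $k$ says nothing about how the antidiagonal-$k$ labels are distributed among colors in $\mathcal T$; stability of $\mathcal T'$ constrains $\mathcal T'$, not $\mathcal T$.
\end{itemize}

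You also correctly identify the down-right constraints (in antidiagonal $k+1$) as the principal hurdle, and you do not resolve them.

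The paper sidesteps all of this by exploiting a lattice structure on the fiber. The key observation (Lemma~\ref{lemma:support}) is that if $\mathcal T,\mathcal U \in \Phi_{\boldsymbol\lambda,\mathbf d}^{-1}(\mathcal F_P)$ then $\mathcal T\vee\mathcal U$ (entrywise max) also satisfies $\Phi_{\boldsymbol\lambda,\mathbf d}(\mathcal T\vee\mathcal U)\supseteq\mathcal F_P$; when $\mathcal F_P$ is a facet this forces equality, so the fiber is closed under $\vee$ and has a unique maximum. Then one shows (Proposition~\ref{prop:otherlatticethings}) that any $\mathcal T$ strictly below the maximum has an unstable triple, via a clean ``corner'' argument: in the component $T^{(i)}<U^{(i)}$, pick a cell $(a,b)$ with $T^{(i)}_{ab}<U^{(i)}_{ab}$ but $T^{(i)}_{a+1,b}=U^{(i)}_{a+1,b}$ and $T^{(i)}_{a,b-1}=U^{(i)}_{a,b-1}$ (a southeast corner of the disagreement set). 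Raising $T^{(i)}_{ab}$ to $U^{(i)}_{ab}$ is then automatically RSSYT-compatible, because the dangerous down-left neighbors already agree with $U^{(i)}$. This is precisely the move that dissolves your ``down-right'' obstruction: rather than arguing about antidiagonal $k+1$, one chooses the replacement cell so that the binding constraints come from cells where $T^{(i)}$ and $U^{(i)}$ coincide.

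In short: replace your direct comparison of two stable tableaux by passing to $\mathcal T\vee\mathcal T'$, and replace your antidiagonal-by-antidiagonal induction by the corner-cell selection. Both ideas are short, and together they give a complete proof.
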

Theorem~\ref{theorem:schubertsum} follows as an immediate consequence of Theorem~\ref{thm:wtbijection} and Corollary~\ref{cor:schubsumposet}.
\begin{equation}
 \mathfrak A_{\boldsymbol \lambda,\mathbf d}=\sum_{T\in {\tt Prism}(\boldsymbol \lambda,\mathbf d)} {\tt wt}(T)=\sum_{w\in {\tt MinPerm}(A_{\boldsymbol \lambda,\mathbf d})}\mathfrak S_w. 
\end{equation}
Similarly, we have
\begin{equation}
\sum_{\mathcal T\in {\tt StableFacet}(\boldsymbol \lambda,\mathbf d)}{\tt wt}(\mathcal T)=\sum_{w\in{\tt Perm}(A_{\boldsymbol \lambda,\mathbf d})} \mathfrak S_w.
\end{equation}

For our proof of Theorem~\ref{thm:wtbijection}, we analyze the fibers of $\Phi_{\boldsymbol \lambda,\mathbf d}$
\begin{equation}
\Phi_{\boldsymbol \lambda,\mathbf d}^{-1}(\mathcal F_P)=\{\mathcal T\in {\tt AllPrism}(\boldsymbol \lambda,\mathbf d):\Phi_{\boldsymbol \lambda,\mathbf d}(\mathcal T)=\mathcal F_P\} .\end{equation}
For an arbitrary face of $\Delta_{A_{\boldsymbol \lambda,\mathbf d}}$, this fiber may be empty.  However, by (\ref{eqn:facetsurject}), facets have nonempty fibers. 
In Proposition~\ref{prop:otherlatticethings} we show that the fiber of any facet has the structure of a lattice.  Furthermore, the maximum element of $\Phi_{\boldsymbol \lambda,\mathbf d}^{-1}(\mathcal F_P)$ is the only tableau in the fiber with no unstable triples.  

Order ${\tt RSSYT}(\lambda,d)$ and ${\tt AllPrism}(\boldsymbol \lambda,\mathbf u)$ by entrywise comparison.
\begin{proposition}
\begin{enumerate}
\item ${\tt RSSYT}(\lambda,d)$ is a lattice.
\item ${\tt AllPrism}(\boldsymbol \lambda,\mathbf u)$ is a lattice.
\end{enumerate}
\end{proposition}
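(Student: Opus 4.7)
The plan is to realize both posets as sublattices of a product lattice and then apply Lemma~\ref{lemma:latticesituation}. For part (1), I view an element of ${\tt RSSYT}(\lambda,d)$ as a function $T\colon \lambda \to [d]$, i.e., as a tuple in $[d]^{\lambda}$. By Lemma~\ref{lemma:latticesituation}(\ref{item:latticesituation1}), $[d]^{\lambda}$ is a lattice under entrywise $\max$ and $\min$, which agrees with the entrywise order on ${\tt RSSYT}(\lambda,d)$. So by Lemma~\ref{lemma:latticesituation}(\ref{item:latticesituation2}), it suffices to check that ${\tt RSSYT}(\lambda,d)\subseteq [d]^{\lambda}$ is closed under entrywise $\max$ and $\min$.

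The only thing to verify is that conditions \ref{item:T1} and \ref{item:T2} survive under these operations. For the row condition \ref{item:T1}, if $S_{i,j}\geq S_{i,j+1}$ and $T_{i,j}\geq T_{i,j+1}$, then immediately $\max(S_{i,j},T_{i,j})\geq \max(S_{i,j+1},T_{i,j+1})$ and $\min(S_{i,j},T_{i,j})\geq \min(S_{i,j+1},T_{i,j+1})$. For the column condition \ref{item:T2}, the key observation is that if $S_{i,j}<S_{i+1,j}$ and $T_{i,j}<T_{i+1,j}$, then since the labels are integers we have $S_{i,j}\leq S_{i+1,j}-1$ and $T_{i,j}\leq T_{i+1,j}-1$, so
\[\max(S_{i,j},T_{i,j})\leq \max(S_{i+1,j},T_{i+1,j})-1<\max(S_{i+1,j},T_{i+1,j}),\]
and similarly for $\min$. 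Thus both $\max(S,T)$ and $\min(S,T)$ lie in ${\tt RSSYT}(\lambda,d)$, which establishes part (1).

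For part (2), recall that
\[
{\tt AllPrism}(\boldsymbol\lambda,\mathbf d)={\tt RSSYT}(\lambda^{(1)},d_1)\times\cdots\times{\tt RSSYT}(\lambda^{(k)},d_k),
\]
and its order is componentwise (equivalently, entrywise on each color class in $\mathbb S(\boldsymbol\lambda,\mathbf d)$). Since a Cartesian product of lattices is a lattice under componentwise $\vee$ and $\wedge$, part (2) follows from part (1) by taking joins and meets coordinate by coordinate.

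There is no real obstacle here; the only point requiring care is the strict-inequality step in the column condition, which works because the entries are integers. No condition links distinct colors in $\mathbb S(\boldsymbol\lambda,\mathbf d)$, so the product-lattice reduction for part (2) is immediate.
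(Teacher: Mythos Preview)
Your proof is correct and follows essentially the same approach as the paper: embed ${\tt RSSYT}(\lambda,d)$ in $[d]^\lambda$, verify that \ref{item:T1} and \ref{item:T2} are preserved under entrywise $\min$ and $\max$, invoke Lemma~\ref{lemma:latticesituation}, and then use the product-of-lattices observation for part (2). The only cosmetic difference is that you justify the strict-inequality step via integrality, whereas the paper (and indeed any totally ordered set) gives $a_1<a_2,\ b_1<b_2 \Rightarrow \max(a_1,b_1)<\max(a_2,b_2)$ directly, so the appeal to integers is unnecessary but harmless.
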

\begin{proof}
\noindent (1) Given $T,U\in{\tt RSSYT}(\lambda,d)$, we claim $T\wedge U=\boldsymbol\min(T,U)$ and $T\vee U=\boldsymbol\max(T,U)$.  Note that 
\begin{equation}
\text{if $a_1\leq a_2$ and $b_1\leq b_2$ then $\min\{a_1,b_1\}\leq \min\{a_2,b_2\}$}.
\end{equation}
  Similarly, 
\begin{equation}
\text{
if $a_1< a_2$ and $b_1< b_2$ then $\min\{a_1,b_1\}< \min\{a_2,b_2\}$.}
\end{equation}  The same statements hold when replacing $\min$ with $\max$.   Therefore, conditions \ref{item:T1} and \ref{item:T2} are preserved under taking entrywise minimums and maximums.  Furthermore, $\boldsymbol\min(T,U)$ and $\boldsymbol\max(T,U)$ use only labels from $[d]$.  Then 
\[\boldsymbol\min(T,U),\boldsymbol\max(T,U)\in {\tt RSSYT}(\lambda,d).\]  By applying Lemma~\ref{lemma:latticesituation}, we see that ${\tt RSSYT}(\lambda,d)$ is a lattice.

\noindent (2)  By (1), ${\tt AllPrism}(\boldsymbol \lambda,\mathbf d)$ is a product of lattices.   So  ${\tt AllPrism}(\boldsymbol \lambda,\mathbf d)$ is itself a lattice.  Again,  $\mathcal T\wedge \mathcal U=\boldsymbol\min(\mathcal T,\mathcal U)$ and $\mathcal T\vee \mathcal U=\boldsymbol\max(\mathcal T,\mathcal U)$.
\end{proof}
 Write 
\begin{equation}{\tt RSSYT}_P(\lambda,d):=\{T\in{\tt RSSYT}(\lambda,d):\Phi_{\lambda,d}(T)\supseteq \mathcal F_P\}.
\end{equation}

\begin{lemma}
\label{lemma:support}
\begin{enumerate}
\item Suppose $T,U\in{\tt RSSYT}_P(\lambda,d)$.  Then 
 \begin{equation*}
\Phi_{ \lambda, d}( T\vee  U)\supseteq \mathcal F_P \quad \text{ and } \quad \Phi_{ \lambda, d}( T\wedge  U)\supseteq \mathcal F_P .
\end{equation*}
As such, ${\tt RSSYT}_P(\lambda,d)$ is a lattice.
\item Suppose $T,U\in{\tt RSSYT}_P(\lambda,d)$  with $T<U$.  Then there exists $V\in {\tt RSSYT}_P(\lambda,d)$ so that $T<V\leq U$ and $V$ differs from $T$ by increasing the value of a single entry. 
\item Take $\mathcal T,\mathcal U\in \Phi_{\boldsymbol \lambda,\mathbf d}^{-1}(\mathcal F_P)$. Then
 \begin{equation*}
\Phi_{\boldsymbol \lambda,\mathbf d}(\mathcal T\vee \mathcal U)\supseteq \mathcal F_P \quad \text{ and } \quad \Phi_{\boldsymbol \lambda,\mathbf d}(\mathcal T\wedge \mathcal U)\supseteq \mathcal F_P .
\end{equation*}

\end{enumerate}
\end{lemma}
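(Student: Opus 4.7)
The plan is to unify parts (1) and (3) under a single plus-diagram containment estimate and to handle (2) by a carefully designed single-cell modification. For (1), I would first translate the hypothesis $\Phi_{\lambda,d}(T) \supseteq \mathcal{F}_P$ into the equivalent statement $P_T \subseteq P$, using the fact that $\mathcal{F}_{P'} \subseteq \mathcal{F}_{P''}$ iff $P'' \subseteq P'$. The main calculation is then
\[
P_{T \vee U} \subseteq P_T \cup P_U \quad \text{and} \quad P_{T \wedge U} \subseteq P_T \cup P_U,
\]
which is immediate from the entrywise definitions of meet and join: if $(i,j) \in P_{T \vee U}$, then $\max(T_{a,\, i+j-a},\, U_{a,\, i+j-a}) = i$ for some $a$, so one of $T$ or $U$ already contributes a plus at $(i,j)$, and the meet case is symmetric. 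Since $P_T, P_U \subseteq P$, this shows both $T \vee U$ and $T \wedge U$ lie in ${\tt RSSYT}_P(\lambda,d)$, and Lemma~\ref{lemma:latticesituation}(\ref{item:latticesituation2}) then promotes this closure property to the sublattice statement.

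For (3), the same idea extends coordinate by coordinate: since $P_{\mathcal T} = \bigcup_i P_{T^{(i)}}$, applying (1) to each factor gives $T^{(i)} \vee U^{(i)} \in {\tt RSSYT}_P(\lambda^{(i)}, d_i)$ for every $i$, and taking unions yields $P_{\mathcal T \vee \mathcal U} \subseteq P$, i.e., $\Phi_{\boldsymbol \lambda, \mathbf d}(\mathcal T \vee \mathcal U) \supseteq \mathcal{F}_P$; the meet case is identical.

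For (2), let $S := \{(a,b) \in \lambda : T_{a,b} < U_{a,b}\}$, which is nonempty since $T < U$. Order $S$ by $(a,b) \preceq (a',b')$ iff $a \leq a'$ and $b \geq b'$, and pick a $\preceq$-maximal element $(a,b) \in S$. By maximality, whenever $(a+1,b)$ or $(a,b-1)$ lies in $\lambda$, it falls outside $S$, so $T$ and $U$ already agree at that cell. Define $V$ to equal $T$ everywhere except at $(a,b)$, where $V_{a,b} := U_{a,b}$. I would then verify three things: (i) $V$ is reverse semistandard---the constraints at $(a-1,b)$ and $(a,b+1)$ follow from $T \leq U$ combined with the SSYT conditions on $U$, while those at $(a+1,b)$ and $(a,b-1)$ follow from $T_{a+1,b} = U_{a+1,b} > U_{a,b}$ and $T_{a,b-1} = U_{a,b-1} \geq U_{a,b}$, respectively, which are guaranteed by $\preceq$-maximality; (ii) $T < V \leq U$ entrywise is clear; (iii) $P_V \subseteq P_T \cup \{(U_{a,b},\, a+b - U_{a,b})\} \subseteq P_T \cup P_U \subseteq P$, so $V \in {\tt RSSYT}_P(\lambda,d)$.

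The main obstacle will be (2). A more naive attempt---for example, replacing $T_{a,b}$ by $T_{a,b}+1$ at an arbitrary cell of $S$---can easily break either the reverse SSYT conditions or the plus-diagram containment. The proof succeeds only because of a pair of dual choices: setting the new entry equal to $U_{a,b}$ makes the one added plus an automatic element of $P_U \subseteq P$, and picking $(a,b)$ to be $\preceq$-maximal in $S$ forces the neighbors immediately below and immediately to the left to agree with $U$, so that the SSYT inequalities at those neighbors inherit directly from the ones already known for $U$.
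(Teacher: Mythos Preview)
Your proposal is correct and follows essentially the same route as the paper. All three parts proceed by the same plus-diagram containment $P_{T\vee U},\,P_{T\wedge U}\subseteq P_T\cup P_U$, and in part~(2) both you and the paper pick a cell $(a,b)$ in $S=\{(a,b):T_{a,b}<U_{a,b}\}$ with $(a+1,b),(a,b-1)\notin S$ and replace $T_{a,b}$ by $U_{a,b}$; your $\preceq$-maximality is just a packaging of that choice, and your explicit verification of the four RSSYT inequalities is more detailed than what the paper writes but identical in content.
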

\begin{proof}

\noindent (1)  Let $T,U\in{\tt RSSYT}_P(\lambda,d)$.  Fix an antidiagonal $D$ of $\lambda$. Let $\{a_1,a_2,\ldots, a_m\}$ and $\{b_1,b_2,\ldots,b_m\}$ be the ordered lists of labels which appear in antidiagonal $D$ of $T$ and $U$ respectively.  Then the entries in antidiagonal $D$ of $T\vee U$ are \[\{\max(a_1,b_1),\ldots, \max(a_m,b_m)\}\subseteq\{a_1,\ldots,a_m\}\cup \{b_1,\ldots,b_m\}.\]  Since this holds for every antidiagonal, 
\[P_{T\vee U}\subseteq P_T\cup P_U.\]  Therefore,
\[\Phi_{\lambda,d}(T\vee U)\supseteq \Phi_{\lambda,d}(T)\cap \Phi_{\lambda,d}(U)\supseteq \mathcal F_P.\]
The argument for $T\wedge  U$ is the same.

\noindent (2)  Suppose $T,U\in {\tt RSSYT}_P(\lambda,d)$ and $T<U$.  
  Since $T<U$ all entries of $T$ are (weakly) less than the entries of $U$.  Let $S=\{(i,j):T_{ij}<U_{ij}\}$.  Since $T\neq U$, there is some entry of $T$ that is strictly less than the corresponding entry in $U$, so $S\neq \emptyset$.

Since $S$ is finite and nonempty, there is some $(i,j)\in S$ so that $(i+1,j),(i,j-1)\not \in S$. 
Then replace the $(i,j)$ entry of $T$ with $U_{ij}$ and call this $V$.  Then $V\in {\tt RSSYT}(\lambda,d)$.  Furthermore, the entries in each antidiagonal of $V$ form a subset of the union of the antidiagonal entries of $T$ and $U$ so  $\Phi_{\lambda,d}(V)\supseteq \Phi_{\lambda,d}(T)\cap \Phi_{\lambda,d}(U)\supseteq \mathcal F_P$.
Then we have produced $V\in {\tt RSSYT}_P(\lambda,d)$ so that $T<V<U$.

\noindent (3) 
By definition, $\Phi_{\boldsymbol \lambda,\mathbf d}(\mathcal T)=\Phi_{\boldsymbol \lambda,\mathbf d}(\mathcal U)=\mathcal F_P$.  Then $\Phi_{\lambda^{(i)},d_i}(T^{(i)})\supseteq \mathcal F_P$    and $\Phi_{\lambda^{(i)},d_i}(U^{(i)})\supseteq \mathcal F_P$ for all $i=1,\ldots,k$.  Applying (1), we have \[\Phi_{\lambda^{(i)},d_i}(T^{(i)}\vee U^{(i)})\supseteq \mathcal F_P \text{ and } \Phi_{\lambda^{(i)},d_i}(T^{(i)}\wedge U^{(i)})\supseteq \mathcal F_P.\]
Therefore, \[\Phi_{\boldsymbol \lambda,\mathbf d}(\mathcal T\vee \mathcal U)=\Phi_{\lambda^{(1)},d_1}(T^{(1)}\vee U^{(1)})\cap \ldots \cap \Phi_{\lambda^{(k)},d_k}(T^{(k)}\vee U^{(k)})\supseteq \mathcal F_P\] and 
\begin{align*}
\Phi_{\boldsymbol \lambda,\mathbf d}(\mathcal T\wedge \mathcal U)=\Phi_{\lambda^{(1)},d_1}(T^{(1)}\wedge U^{(1)})\cap \ldots \cap \Phi_{\lambda^{(k)},d_k}(T^{(k)}\wedge U^{(k)})\supseteq \mathcal F_P. & \qedhere
\end{align*}
\end{proof}

\begin{proposition}
\label{prop:otherlatticethings}
 Fix $\mathcal F_P\in F(\Delta_{A_{\boldsymbol \lambda,\mathbf d}})$.
\begin{enumerate}
\item $\Phi_{\boldsymbol \lambda,\mathbf d}^{-1}(\mathcal F_P)$ is a lattice. 
\item Suppose $\mathcal T,\mathcal U\in \Phi_{\boldsymbol \lambda,\mathbf d}^{-1}(\mathcal F_P)$ with $\mathcal T<\mathcal U$. Then $\mathcal T$ has an unstable triple.
\item $|{\tt StableFacet}(\boldsymbol \lambda,\mathbf d)\cap \Phi_{\boldsymbol \lambda,\mathbf d}^{-1}(\mathcal F_P)|=1.$
\end{enumerate}
\end{proposition}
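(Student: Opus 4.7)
The plan is to leverage the sublattice structure of the fiber and then use a careful antidiagonal bookkeeping to recognize unstable triples.

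For (1), the core observation is that $\mathcal F_P$ being a facet of $\Delta_{A_{\boldsymbol\lambda,\mathbf d}}$ means it is a \emph{maximal} face, so any face containing it must equal it. Combining Lemma~\ref{lemma:support}(3) with this observation, for any $\mathcal T,\mathcal U\in\Phi_{\boldsymbol\lambda,\mathbf d}^{-1}(\mathcal F_P)$ the containment $\Phi_{\boldsymbol\lambda,\mathbf d}(\mathcal T\vee \mathcal U)\supseteq \mathcal F_P$ becomes equality, and likewise for the meet. Thus the fiber is closed under joins and meets in ${\tt AllPrism}(\boldsymbol\lambda,\mathbf d)$, so Lemma~\ref{lemma:latticesituation}(\ref{item:latticesituation2}) makes it a sublattice and hence a lattice in its own right.

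For (2), given $\mathcal T<\mathcal U$ in the fiber, I would apply Lemma~\ref{lemma:support}(2) color by color to produce $\mathcal V$ in the fiber with $\mathcal T<\mathcal V\leq \mathcal U$ that differs from $\mathcal T$ in exactly one entry: at some position $(a,b)$ in color $c$, the label $T^{(c)}_{ab}=\ell$ is replaced by $V^{(c)}_{ab}=\ell'>\ell$. The equality $P_{\mathcal T}=P_{\mathcal V}=P$ forces two additional entries in $\mathcal T$ beyond the one at $(a,b,c)$. Namely, the plus $(\ell,a+b-\ell)\in P$ must still appear in $P_{\mathcal V}$ even though the $\ell_c$ contribution has been removed, so it comes from a distinct position $(a',b',d)\neq(a,b,c)$ with $T^{(d)}_{a'b'}=\ell$ and $a'+b'=a+b$; symmetrically, the plus $(\ell',a+b-\ell')\in P$ is already present in $P_{\mathcal T}$ without any $\ell'_c$ contribution, so it comes from a distinct $(a'',b'',e)$ with $T^{(e)}_{a''b''}=\ell'$ and $a''+b''=a+b$. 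Since $\mathcal V$ is itself a valid prism tableau obtained by replacing $\ell_c$ with $\ell'_c$, the three labels $\{\ell_c,\ell_d,\ell'_e\}$ sit in a common antidiagonal of $\mathcal T$ and satisfy the definition of an unstable triple.

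For (3), combining (1) and (2) gives the conclusion. By (1), the finite fiber has a unique maximum $\mathcal M$; by (2), every element strictly below $\mathcal M$ has an unstable triple. It remains to verify $\mathcal M$ itself has none: if $\{\ell_c,\ell_d,\ell'_e\}$ were an unstable triple of $\mathcal M$, then replacing $\ell_c$ by $\ell'_c$ would yield a prism tableau $\mathcal M'>\mathcal M$ with $P_{\mathcal M'}=P_{\mathcal M}$, since the plus at row $\ell$ survives via $\ell_d$ and the plus at row $\ell'$ was already contributed by $\ell'_e$; hence $\mathcal M'\in\Phi_{\boldsymbol\lambda,\mathbf d}^{-1}(\mathcal F_P)$, contradicting maximality. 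The main obstacle is the fine bookkeeping in step (2): one must carefully identify, for each plus of $P$, which entry of $\mathcal T$ versus $\mathcal V$ contributes it, and check that the three relevant positions are genuinely distinct in the prism sense so that the unstable-triple condition is literally met.
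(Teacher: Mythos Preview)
Your proposal is correct and follows essentially the same approach as the paper: closure under join and meet via Lemma~\ref{lemma:support}(3) plus maximality of $\mathcal F_P$ for part~(1), the single-entry increment from Lemma~\ref{lemma:support}(2) for part~(2), and the unique maximum of the finite lattice for part~(3).

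The one genuine difference is in the level of detail for part~(2) and the converse in part~(3). The paper, after producing the one-entry-larger tableau $(T^{(1)},\ldots,V,\ldots,T^{(k)})$ in the fiber, simply asserts ``So $\mathcal T$ has an unstable triple,'' and in part~(3) says ``by definition'' the replacement stays in the fiber. You actually carry out the antidiagonal bookkeeping the paper omits: from $P_{\mathcal T}=P_{\mathcal V}$ you extract the witness $\ell_d$ (so the plus at row $\ell$ survives the replacement) and the witness $\ell'_e$ (so the plus at row $\ell'$ was already there), which is exactly what the definition of unstable triple requires; and conversely you check that those two witnesses guarantee $P_{\mathcal M'}=P_{\mathcal M}$. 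This is not a different argument, just a more honest one---the paper's ``by definition'' is doing real work that your bookkeeping makes explicit. Your caution about checking that the three positions are genuinely distinct is well placed: $(a',b',d)\neq(a,b,c)$ is forced because the entry at $(a,b,c)$ in $\mathcal V$ is $\ell'\neq\ell$, and $(a'',b'',e)\neq(a,b,c)$ because the entry at $(a,b,c)$ in $\mathcal T$ is $\ell\neq\ell'$.
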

\begin{proof}
\noindent (1) $\Phi_{\boldsymbol \lambda,\mathbf d}^{-1}(\mathcal F_P)$ is a subposet of ${\tt AllPrism}(\boldsymbol \lambda,\mathbf d)$.  So it is enough to show that $\Phi_{\boldsymbol \lambda,\mathbf d}^{-1}(\mathcal F_P)$ is closed under taking joins and meets.

By Lemma~\ref{lemma:support}, $\Phi_{\boldsymbol \lambda,\mathbf d}(\mathcal T\wedge \mathcal U)\supseteq \mathcal F_P$.   Since $\mathcal F_P\in F(\Delta_{A_{\boldsymbol \lambda,\mathbf d}})$, this  containment  is actually an equality.  Therefore, $\Phi_{\boldsymbol \lambda,\mathbf d}(\mathcal T\wedge \mathcal U)\in \Phi_{\boldsymbol \lambda,\mathbf d}^{-1}(\mathcal F_P)$, i.e., $\Phi_{\boldsymbol \lambda,\mathbf d}^{-1}(\mathcal F_P)$ is closed under joins.  The argument for meets is the same.  So we conclude $\Phi_{\boldsymbol \lambda,\mathbf d}^{-1}(\mathcal F_P)$ is a lattice.

\noindent (2)
Suppose $\mathcal U>\mathcal T$. In particular, for some $i$, we have $U^{(i)}>T^{(i)}$.  By the part 2 of Lemma~\ref{lemma:support}, there is $V\in {\tt RSSYT}_P(\lambda^{(i)},d_i)$ with $U^{(i)}\geq V >T^{(i)}$ so that $V$ differs from $T^{(i)}$ by increasing the value a single entry.

Since $\Phi_{\lambda^{(\ell)},d_\ell}(T^{(\ell)})\supseteq \mathcal F_P$ for all $\ell=1,\ldots,k$ and $\Phi_{\lambda^{(i)},d_i}(V)\supseteq \mathcal F_P$, we have 
\begin{equation}
\label{eqn:here}
\Phi_{\boldsymbol \lambda,\mathbf d}(T^{(1)}, \ldots, T^{(i-1)}, V , T^{(i+1)},\ldots T^{(k)}) \supseteq \mathcal F_P.
\end{equation}
Since $\mathcal F_P$ is a facet, (\ref{eqn:here}) is an equality.  Then \[(T^{(1)}, \ldots, T^{(i-1)}, V, T^{(i+1)},\ldots T^{(k)})\in \Phi_{\boldsymbol \lambda,\mathbf d}^{-1}(\mathcal F_P).\]  So $\mathcal T$ has an unstable triple.

\noindent (3)  By (1), $\Phi_{\boldsymbol \lambda,\mathbf d}^{-1}(\mathcal F_P)$ is a lattice.  In particular, it is finite and nonempty so has a unique maximum element.

    By part (2), if $\mathcal T$ is not the maximum of $\Phi_{\boldsymbol \lambda,\mathbf d}^{-1}(\mathcal F_P)$, then it has an unstable triple.  Conversely, if $\mathcal T$ has an unstable triple, then by definition, there is $\mathcal T' \in \Phi_{\boldsymbol \lambda,\mathbf d}^{-1}(\mathcal F_P)$ with $\mathcal T<\mathcal T'$.  So $\mathcal T$ is not the maximum.
Therefore, ${\tt StableFacet}(\boldsymbol \lambda,\mathbf d)\cap \Phi_{\boldsymbol \lambda,\mathbf d}^{-1}(\mathcal F_P)$ is the maximum of $\Phi_{\boldsymbol \lambda,\mathbf d}^{-1}(\mathcal F_P)$.  So
 \[|{\tt StableFacet}(\boldsymbol \lambda,\mathbf d)\cap \Phi_{\boldsymbol \lambda,\mathbf d}^{-1}(\mathcal F_P)|=1. \qedhere\]
\end{proof}

\begin{proof}[Proof of Theorem~\ref{thm:wtbijection}]
\noindent (1)
Define $\Psi: F(\Delta_{A_{\boldsymbol \lambda,\mathbf d}})\rightarrow {\tt StableFacet}(\boldsymbol \lambda,\mathbf d)$ by mapping
$\mathcal F_P$ to the unique element in  ${\tt StableFacet}(\boldsymbol \lambda,\mathbf d)\cap \Phi_{\boldsymbol \lambda,\mathbf d}^{-1}(\mathcal F_P)$.  By Proposition~\ref{prop:otherlatticethings} part (3), this is well defined.  Injectivity follows since \[\Phi_{\boldsymbol \lambda,\mathbf d}^{-1}(\mathcal F_P)\cap\Phi_{\boldsymbol \lambda,\mathbf d}^{-1}(\mathcal F_{P'})=\emptyset\]whenever $P\neq P'$.

Given $\mathcal T\in {\tt StableFacet}(\boldsymbol \lambda,\mathbf d)$, let $\mathcal F_P=\Phi_{\boldsymbol \lambda,\mathbf u}$.  By the definition of ${\tt StableFacet}(\boldsymbol \lambda,\mathbf d)$, we have $\mathcal F_P\in  F(\Delta_{A_{\boldsymbol \lambda,\mathbf d}})$.  Then $\mathcal T=\Psi(\mathcal F_P)$. As such, $\Psi$ is surjective.

Since $\Phi_{\boldsymbol \lambda,\mathbf u}$ is weight preserving, $\Phi(F_{\tt max}(\Delta_{A_{\boldsymbol \lambda,\mathbf d}}))={\tt Prism}(\boldsymbol \lambda,\mathbf d)$.
\end{proof}

\section{ Multidegrees and ASM varieties}
\label{section:asmgeo}

\subsection{Multidegrees}

In this section, we review multidegrees.  See \cite[Chapter~8]{miller2004combinatorial} for an introduction.   
 We say $\Bbbk[\mathbf z]$ is {\bf multigraded} by $\mathbb Z^n$ if there is a semigroup homomorphism 
${\tt d}:\mathbb N^N\rightarrow \mathbb Z^n.$  We may interpret ${\tt d}$ as a map from monomials in $\Bbbk[\mathbf z]$ to elements of $ \mathbb Z^n$.  As such, we write ${\tt d}(\mathbf z^{\mathbf v}):={\tt d}(\mathbf v)$.

Write $\Bbbk[\mathbf z]_{\mathbf a}$ for the $\Bbbk$ vector space which has as a basis the monomials of degree $\mathbf a$, \[\{\mathbf z^{\mathbf v}:{\tt d}(\mathbf z^{\mathbf v})={\mathbf a}\}.\]  As a vector space, $\displaystyle\Bbbk[\mathbf z]=\bigoplus_{\mathbf a\in\mathcal A}\Bbbk[\mathbf z]_{\mathbf a}$.
A $\Bbbk[\mathbf z]$-module $M$ is multigraded by $\Bbbk[\mathbf z]$ if it has a direct sum decomposition $\displaystyle M=\bigoplus_{\mathbf a\in \mathbb N^n} M_{\mathbf a}$ which satisfies
\[\Bbbk[\mathbf z]_{\mathbf a}\cdot M_{\mathbf b}\subseteq  M_{\mathbf a+\mathbf b}\] for all $\mathbf a,\mathbf b\in   \mathbb Z^n$.
We will assume that the multigrading is {\bf positive}, that is  each of the graded pieces of $\Bbbk[\mathbf z]$ are finite dimensional as $\Bbbk$ vector spaces.

Let $\mathcal C$ be a function from finitely generated, graded $S$ modules to $\mathbb Z[x_1,\ldots, x_n]$.  We say $\mathcal C$ is {\bf additive} if for each $M$
\begin{equation}
\label{eqn:multidegreeadd}
\mathcal C(M;\mathbf x)=\sum_{i=1}^k {\tt mult}(M,\mathfrak p_i) \mathcal C(\Bbbk[\mathbf z]/\mathfrak p_i;\mathbf x).
\end{equation}
Here, $\{\mathfrak p_1,\ldots, \mathfrak p_k\}$ is the set of maximal dimensional associated primes of $M$ and ${\tt mult}(M,\mathfrak p)$ is the \emph{multiplicity} of $M$ at $\mathfrak p$.  See \cite[Section~3.6]{eisenbud1995commutative}.

Fix a monomial term order on $\Bbbk[\mathbf z]$.  If $f\in \Bbbk[\mathbf z]$, write ${\tt init}(f)$ for its lead term.  The {\bf initial ideal} of $I$ is 
\[{\tt init}(I):=\{{\tt init}(f):f\in I\}.\]   
 $\mathcal C$ is {\bf degenerative} if given a graded free presentation $F/K$, we have 
\begin{equation}
\label{eqn:multidegreedegen}
\mathcal C(F/K;\mathbf x)=\mathcal C(F/{\tt init}(K);\mathbf x).
\end{equation} 
Write $\langle \mathbf a,\mathbf x\rangle:=a_1x_1+a_2x_2+\ldots + a_nx_n$.
\begin{theorem}{{\cite[Theorem 8.44]{miller2004combinatorial}}}
There is a unique function $\mathcal C$ which is additive and degenerative so that 
\begin{equation}
\mathcal C(\Bbbk[\mathbf z]/\langle z_{i_1},\ldots, z_{i_k}\rangle;\mathbf x)= \prod_{\ell=1}^k \langle {\tt d}(z_{i_\ell}),\mathbf x \rangle .
\end{equation}
\end{theorem}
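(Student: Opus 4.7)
The plan is to construct $\mathcal{C}$ via K-polynomials (equivalently, multigraded Hilbert series) and then check that the resulting function is the unique one satisfying the three listed properties.

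\emph{Construction.} For a finitely generated $\mathbb{Z}^n$-graded $\Bbbk[\mathbf{z}]$-module $M$, I would fix a finite free multigraded resolution
\begin{equation*}
0\to F_r\to\cdots\to F_1\to F_0\to M\to 0,\qquad F_i=\bigoplus_j \Bbbk[\mathbf z](-\mathbf a_{ij}),
\end{equation*}
which exists because the multigrading is positive and $\Bbbk[\mathbf z]$ has finite global dimension. Define the $K$-polynomial
\begin{equation*}
\mathcal{K}(M;\mathbf t)=\sum_{i,j}(-1)^i\mathbf t^{\mathbf a_{ij}}\in\mathbb Z[\mathbf t^{\pm 1}],
\end{equation*}
which is independent of the chosen resolution by a standard argument using the Euler characteristic of the Tor spectral sequence. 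Substitute $t_\ell\mapsto 1-\langle \mathtt d(z_\ell),\mathbf x\rangle$; the resulting formal series has no term in total $\mathbf x$-degree smaller than the codimension of the support of $M$, and I would take $\mathcal{C}(M;\mathbf x)$ to be its lowest-degree homogeneous component (after collecting).

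\emph{Checking the three properties.} First, for $M=\Bbbk[\mathbf z]/\langle z_{i_1},\dots,z_{i_k}\rangle$ the Koszul complex on $\{z_{i_\ell}\}$ is a free multigraded resolution, giving
\begin{equation*}
\mathcal{K}(M;\mathbf t)=\prod_{\ell=1}^k(1-\mathbf t^{\mathtt d(z_{i_\ell})});
\end{equation*}
after the substitution and extracting the lowest-degree piece, only $\prod_{\ell=1}^k\langle\mathtt d(z_{i_\ell}),\mathbf x\rangle$ survives, which establishes the normalization. Degenerativeness follows from the fact that $F/K$ and $F/{\tt init}(K)$ have the same multigraded Hilbert series, hence the same $K$-polynomial, under any flat term-order degeneration. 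For additivity, I would filter $M$ by a prime filtration $0=M_0\subset M_1\subset\cdots\subset M_N=M$ with successive quotients $M_i/M_{i-1}\cong(\Bbbk[\mathbf z]/\mathfrak p_i)(-\mathbf b_i)$; the $K$-polynomial is additive on short exact sequences, and a shift twists $\mathcal K$ by a monomial that vanishes to order $1$ under the substitution and hence does not contribute to the lowest-degree part unless the codimension of $\mathfrak p_i$ is maximal. Collecting contributions, exactly the maximal-dimensional associated primes survive, with coefficients equal to the multiplicities ${\tt mult}(M,\mathfrak p_i)$.

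\emph{Uniqueness.} Suppose $\mathcal{C}'$ is any additive, degenerative function satisfying the normalization. For any $M=F/K$, degenerativeness reduces to the case of a monomial ideal; additivity then expresses $\mathcal{C}'$ as a weighted sum over maximal-dimensional associated primes of a monomial module, each of which is of the form $\langle z_{i_1},\dots,z_{i_k}\rangle$, where $\mathcal{C}'$ is forced by the normalization. Thus $\mathcal{C}'=\mathcal{C}$ on every module.

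\emph{Main obstacle.} The subtle point is additivity: one must verify that only the \emph{maximal-dimensional} associated primes contribute, and with the correct multiplicities. This requires careful analysis of how degree-shifts in a prime filtration interact with extracting the lowest-degree part after the substitution $t_\ell\mapsto 1-\langle\mathtt d(z_\ell),\mathbf x\rangle$, and is the step where the positivity hypothesis on the grading is essential.
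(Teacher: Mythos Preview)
The paper does not prove this statement; it is quoted verbatim as \cite[Theorem~8.44]{miller2004combinatorial} and used as a black box, with no argument given. There is therefore nothing in the paper to compare your proposal against.

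For what it is worth, your sketch is essentially the standard construction in Miller--Sturmfels: define $\mathcal C$ via the $K$-polynomial and extract the lowest-degree part after the substitution $t_\ell\mapsto 1-x_\ell$ (in the appropriate sense), then verify the axioms. Your identification of additivity as the delicate point is accurate, and the prime-filtration argument is the right tool. One small caution: in your normalization check you wrote the substitution as $t_\ell\mapsto 1-\langle\mathtt d(z_\ell),\mathbf x\rangle$, but the $K$-polynomial variables $\mathbf t$ are indexed by $\mathbb Z^n$, not by the $z$-variables; the substitution is $t_i\mapsto 1-x_i$ for $i=1,\dots,n$, and then $\mathbf t^{\mathtt d(z_{i_\ell})}$ becomes $\prod_i(1-x_i)^{\mathtt d(z_{i_\ell})_i}$, whose linear part is $1-\langle\mathtt d(z_{i_\ell}),\mathbf x\rangle$. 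This is a notational slip rather than a mathematical error, but it would need to be cleaned up in a full proof.
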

$\mathcal C(M;\mathbf x)$ is called the {\bf multidegree} of $M$.

\begin{lemma}
\label{lemma:summaxfacet}
Suppose $I$ is a square-free monomial ideal in $\Bbbk[\mathbf z]$.  Then
\[\mathcal C(\Bbbk[\mathbf z]/I;\mathbf x)=\sum_{f\in F_{\tt max}(\Delta(I))} C(\Bbbk[\mathbf z]/\mathfrak m_{\overline{f}};\mathbf x).\]
\end{lemma}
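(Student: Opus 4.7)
The plan is to apply the additivity formula \eqref{eqn:multidegreeadd} directly, after identifying the maximal dimensional associated primes of $\Bbbk[\mathbf z]/I$ and computing the relevant multiplicities. Since $I$ is a square-free monomial ideal, Theorem~\ref{thm:bijSR} gives $I=I_{\Delta(I)}=\bigcap_{f\in\Delta(I)}\mathfrak m_{\overline f}$. I would first observe that this intersection is redundant on non-facets: if $f\subseteq f'$ then $\overline f\supseteq \overline{f'}$, so $\mathfrak m_{\overline f}\supseteq \mathfrak m_{\overline{f'}}$, and hence
\[
I=\bigcap_{f\in F(\Delta(I))}\mathfrak m_{\overline f}.
\]
This is already an irredundant primary decomposition (each $\mathfrak m_{\overline f}$ is prime, and the $\overline f$ for $f\in F(\Delta(I))$ are mutually incomparable as subsets of $[N]$).

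Next I would identify which primes are maximal dimensional. Since $\mathfrak m_{\overline f}$ is generated by the variables $z_i$ with $i\notin f$, the quotient $\Bbbk[\mathbf z]/\mathfrak m_{\overline f}\cong \Bbbk[z_i:i\in f]$ has Krull dimension $|f|=\dim(f)+1$. Thus the associated primes of $\Bbbk[\mathbf z]/I$ of maximal dimension are precisely those $\mathfrak m_{\overline f}$ with $\dim(f)=\dim(\Delta(I))$, i.e.\ $f\in F_{\tt max}(\Delta(I))$.

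It remains to check that the multiplicity ${\tt mult}(\Bbbk[\mathbf z]/I,\mathfrak m_{\overline f})$ equals $1$ for each $f\in F_{\tt max}(\Delta(I))$. Since $I$ is radical and $\mathfrak m_{\overline f}$ is a minimal prime, the localization $(\Bbbk[\mathbf z]/I)_{\mathfrak m_{\overline f}}$ is a reduced $0$-dimensional local ring, hence a field, so its length is $1$. Plugging these multiplicities into \eqref{eqn:multidegreeadd} yields
\[
\mathcal C(\Bbbk[\mathbf z]/I;\mathbf x)=\sum_{f\in F_{\tt max}(\Delta(I))}\mathcal C(\Bbbk[\mathbf z]/\mathfrak m_{\overline f};\mathbf x),
\]
as desired. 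The only nontrivial step is the identification of multiplicities; all other steps are bookkeeping with the Stanley--Reisner correspondence. I expect no serious obstacle, since every ingredient (radicality, primary decomposition of monomial ideals, dimension formula for coordinate subspaces) is standard.
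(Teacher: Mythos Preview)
Your proposal is correct and follows essentially the same approach as the paper: both arguments reduce the prime decomposition from Theorem~\ref{thm:bijSR} to facets, observe that the maximal dimensional associated primes are exactly $\{\mathfrak m_{\overline f}:f\in F_{\tt max}(\Delta(I))\}$, note that multiplicities are $1$ because $I$ is radical, and apply additivity \eqref{eqn:multidegreeadd}. Your version spells out a bit more detail (the dimension count and the localization argument for multiplicity), but the structure is identical.
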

\begin{proof}
 Since $I$ is square-free, it has the prime decomposition
\[I=\bigcap_{f\in F(\Delta(I))}\mathfrak m_{\overline{f}}.\]
Squarefree monomial ideals are radical, and so
\[{\tt mult}(\Bbbk[\mathbf z]/I,\Bbbk[\mathbf z]/\mathfrak m_{\overline{f}})=1\] whenever $f\in F(\Delta(I))$. The maximal dimensional associated primes of $\Bbbk[\mathbf z]$ are 
\[\{\mathfrak m_{\overline f}: f\in F_{\tt max}(\Delta(I)) .\] 
Applying additivity,
\[\mathcal C(\Bbbk[\mathbf z]/I;\mathbf x)=\sum_{ f\in F_{\tt max}(\Delta(I))} \mathcal C(\Bbbk[z])/\mathfrak m_{\overline f};\mathbf x). \qedhere\]
\end{proof}

\subsection{ASM varieties}
 Recall ${\sf Mat}(n)$ is the space of $n\times n$ matrices over an algebraically closed field $\Bbbk$. 
Write ${\sf GL}(n)$ for the invertible matrices in ${\sf Mat}(n)$ and ${\sf T}$ for the torus of diagonal matrices in ${\sf GL}(n)$.  There is a natural action of ${\sf GL}(n)$, and hence ${\sf T}$, on ${\sf Mat}(n)$ by left multiplication.  A variety $X\subseteq {\sf Mat}(n)$ is ${\sf T}$ stable if $T\cdot X\subseteq X$.

Let $Z=(z_{ij})_{i,j=1}^n$ be a matrix of generic variables and write $\Bbbk[Z]=\Bbbk[z_{11},z_{12},\ldots, z_{nn}]$ for the coordinate ring of ${\sf Mat}(n)$.  Let ${\tt d}$ be the degree map defined by ${\tt d}(z_{ij})=i$.  
   The multigrading defined by the degree map corresponds to the action of ${\mathsf T}$ on ${\sf Mat}(n)$.  In particular,  ${\sf T}$ stable subvarieties of ${\sf Mat}(n)$ have coordinate rings that are $\Bbbk[Z]$-graded modules.
When $\Bbbk[Z]/I$ is the coordinate ring of $X\subseteq {\sf Mat}(n)$,  write $\mathcal C(X;\mathbf x):=\mathcal C(\Bbbk[Z]/I;\mathbf x)$.  In this situation, (\ref{eqn:multidegreeadd}) becomes
\begin{equation}
\label{eqn:multidegreeaddvariety}
\mathcal C(X;\mathbf x)=\sum_{i=1}^k \mathcal C(X_i;\mathbf x)
\end{equation}
where $\{X_1,\ldots,X_k\}$ are the maximal dimensional irreducible components of $X$.  Since $I$ is radical, (\ref{eqn:multidegreeaddvariety}) is a multiplicity free sum.

 Given an $n\times n$ matrix $M$, write $M_{[i],[j]}$ for the submatrix of $M$ which consists of the first $i$ rows and $j$ columns of $M$.  
Fix $w\in {\sf P}(n)$. The {\bf matrix Schubert variety} is 
\begin{equation}
\label{eqn:matrixschubert}
X_w:=\{M\in M_{[i],[j]}: {\rm rank}(M_{[i],[j]})\leq r_w(i,j) \text{ for all } 1\leq i,j\leq n\}.
\end{equation}
Matrix Schubert varieties generalize classical determinantal varieties.  They were studied by W.~Fulton, who showed that they are irreducible \cite{fulton1992flags}.

Given $A\in {\sf ASM}(n)$, we define the {\bf alternating sign matrix variety} \[X_A:=\{M\in M_{[i],[j]}: {\rm rank}(M_{[i],[j]})\leq r_A(i,j) \text{ for all } 1\leq i,j\leq n\}.\]  
Immediately by definition,
\begin{equation}
\label{eqn:containasmvar}
\text{ if } \quad A\leq B  \quad \text{ then } \quad X_A\supseteq X_B.
\end{equation}

Let ${\sf B}_-,{\sf B}\subset {\sf GL}_n$ be the Borel subgroups of lower triangular and upper triangular matrices respectively.  There is a left action of ${\sf B}_-\times {\sf B}$  on ${\sf Mat}(n)$ given by 
\begin{equation}
(b_1,b_2)\cdot M:=b_1 M b_2^{-1}.
\end{equation}
Write $\Omega_w={\sf B}_-\times {\sf B}\cdot w$ for the orbit through the partial permutation $w$. 
We recall some facts about ${\sf B}_-\times {\sf B}$ orbits.  See \cite[Chapter 15]{miller2004combinatorial}.
\begin{proposition}
\label{prop:orbitclosure}
\begin{enumerate}
\item If $M\in \Omega_w$, then ${\rm rank}(M_{[i],[j]})=r_w(i,j)$ for all $1\leq i,j\leq n$.
\item There is a unique $w\in {\sf P}(n)$ in each ${\sf B}_-\times\sf{B}$ orbit. 
\item $X_w=\overline{\Omega_w}$.  Furthermore, $X_w$ is irreducible and has dimension $n^2-\ell(w)$.
\end{enumerate} 
\end{proposition}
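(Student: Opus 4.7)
The plan is to prove the three parts in sequence, with (1) and (2) furnishing tools for the main work in (3).

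For part (1), I would check directly that the rank of a northwest submatrix is invariant under the $\mathsf{B}_-\times \mathsf{B}$ action. Given $b_1 \in \mathsf{B}_-$ and $b_2 \in \mathsf{B}$, write $M' = b_1 M b_2^{-1}$. Left multiplication by the lower-triangular $b_1$ expresses row $k$ of $b_1 M$ as a linear combination of rows $1,\ldots,k$ of $M$, with the combination governed by the invertible block $(b_1)_{[i],[i]}$; hence the first $i$ rows of $b_1 M$ row-span the same space (restricted to any set of columns) as the first $i$ rows of $M$. Dually, right multiplication by the upper-triangular $b_2^{-1}$ preserves the column span of the first $j$ columns. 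Combining, $\mathrm{rank}(M'_{[i],[j]}) = \mathrm{rank}(M_{[i],[j]})$. Applying this with the orbit representative $w$ yields $r_w(i,j)$.

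For part (2), existence follows from Gaussian elimination using only the allowed moves: lower-triangular row operations (adding a multiple of row $i$ to a lower row) and upper-triangular column operations (adding a multiple of column $j$ to an earlier column). Scanning in a standard order (say, processing rows from top to bottom, selecting the leftmost nonzero entry in each row as a pivot and killing everything below it and to its right via the allowed moves) reduces any $M$ to a partial permutation matrix. Uniqueness: if $v,w \in \mathsf{P}(n)$ lie in the same orbit, then by part (1) $r_v = r_w$; but a partial permutation is determined by its corner-sum function (equivalently, via passing to the completions $\widetilde v, \widetilde w$ in $\mathsf{ASM}(2n)$ and invoking Lemma~\ref{lemma:rankpartialorder}), so $v=w$.

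For part (3), I would establish the two inclusions. The containment $\overline{\Omega_w}\subseteq X_w$ is immediate: $X_w$ is cut out by vanishing of minors, hence is closed, and part (1) places $\Omega_w$ inside $X_w$. Irreducibility of $\overline{\Omega_w}$ follows because $\Omega_w$ is the image of the irreducible algebraic group $\mathsf{B}_-\times \mathsf{B}$ under the morphism $(b_1,b_2)\mapsto b_1 w b_2^{-1}$. For the reverse inclusion $X_w\subseteq \overline{\Omega_w}$, I would use part (2) to decompose $\mathsf{Mat}(n)=\bigsqcup_{v\in \mathsf{P}(n)}\Omega_v$; any $M\in X_w$ lies in some $\Omega_v$, where by part (1) and the definition of $X_w$ we have $r_v(i,j)\le r_w(i,j)$ for all $i,j$, so Lemma~\ref{lemma:rankpartialorder} gives $v\le w$. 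It then suffices to show $\Omega_v\subseteq \overline{\Omega_w}$ whenever $v\le w$. I would induct on cover relations: for each Bruhat cover $v\lessdot w$, exhibit an explicit one-parameter family $M_t=b_1(t)\,w\,b_2(t)^{-1}\in \Omega_w$ whose $t\to 0$ limit lies in $\Omega_v$ (the standard degeneration by sending a single off-diagonal entry to $0$).

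For the dimension, the orbit map from $\mathsf{B}_-\times\mathsf{B}$ onto $\Omega_w$ gives $\dim\Omega_w = \dim(\mathsf{B}_-\times\mathsf{B})-\dim\mathrm{Stab}(w)$, and a direct count on the stabilizer (counting which upper/lower triangular coordinates are free to change $w$) yields $\dim\Omega_w = n^2-\ell(w)$; alternatively, one may pass to the completion $\widetilde w\in \mathcal{S}_N$, appeal to the honest-permutation case originally proved by Fulton \cite{fulton1992flags}, and descend via the natural projection relating $X_{\widetilde w}$ to $X_w$.

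The main obstacle is the inclusion $X_w\subseteq \overline{\Omega_w}$: one must match the Bruhat order on partial permutations (characterized combinatorially via corner sums) with the geometric orbit-closure order, and this requires constructing explicit Bruhat-cover degenerations. Parts (1) and (2), along with the closure and irreducibility halves of part (3), are essentially formal.
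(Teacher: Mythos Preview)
Your proposal is correct, but the paper's own proof is considerably shorter: part~(1) is argued exactly as you do (the $\mathsf{B}_-\times\mathsf{B}$ action is by downward row sweeps and rightward column sweeps, preserving all northwest ranks), while parts~(2) and~(3) are simply attributed to \cite[Proposition~15.27 and Theorem~15.31]{miller2004combinatorial}. What you have sketched is essentially the content of those cited results---Gaussian elimination to a partial-permutation normal form, uniqueness via the corner-sum function, and the standard Bruhat-cover degeneration argument for the orbit-closure description---so your route is not so much \emph{different} from the paper's as more self-contained. The benefit of your approach is that the reader sees the actual mechanism; the cost is that the degeneration step you flag as the ``main obstacle'' (matching the combinatorial order on $\mathsf{P}(n)$ with the geometric closure order) does require genuine work, which is precisely what the paper outsources to the reference.

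One small slip worth fixing: in part~(2) you describe the allowed column operations as ``adding a multiple of column $j$ to an earlier column,'' but right multiplication by an upper-triangular matrix in fact lets you add multiples of column $j$ to \emph{later} columns (sweeping to the right, as in the paper's phrasing). Your algorithm description---select the leftmost pivot and kill everything below and to its right---already uses the correct direction, so this is only a misstatement in the parenthetical and does not affect the argument.
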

\begin{proof}

\noindent (1) The action of ${\sf B}_-\times{\sf B}$ on $M\in {\sf Mat}(n)$ is by row operations which sweep downwards and column operations which sweep to the right.  Restricted to $M_{[i],[j]}$ this action is just row and column operations within $M_{[i],[j]}$.  So ${\rm rank}(M_{[i],[j]})$ is stable under this action for all $1\leq i,j\leq n$.  In particular, ${\rm rank}(w_{[i],[j]})=r_w(i,j)$, so the result follows.

\noindent (2) This is Proposition~15.27 in \cite{miller2004combinatorial}.  

\noindent (3) See Theorem~15.31 \cite{miller2004combinatorial}.
\end{proof}

$X_A$ has the following set theoretic descriptions as unions and intersections of other ASM varieties.
\begin{proposition}
\label{prop:intersectandunion}
\begin{enumerate}
\item $\displaystyle X_A=\bigcup_{w\in {\tt Perm}(A)} X_w$.
\item If $  A=\vee\{A_1,\ldots,A_k\}$, then $\displaystyle X_A=\bigcap_{i=1}^k A_i$.
\end{enumerate}
\end{proposition}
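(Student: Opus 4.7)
The plan is to prove both set-theoretic equalities by checking two inclusions each, relying on the ${\sf B}_-\times {\sf B}$ orbit structure for Part (1) and on the order-reversing correspondence $A\mapsto r_A$ for Part (2). In both parts, one inclusion is immediate from (\ref{eqn:containasmvar}), and only the reverse inclusion requires any work.

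For Part (1), the inclusion $\bigcup_{w\in{\tt Perm}(A)} X_w\subseteq X_A$ is immediate: each $w\in{\tt Perm}(A)$ satisfies $w\geq A$, so $X_w\subseteq X_A$. For the reverse, I will take $M\in X_A$ and produce some $w\in{\tt Perm}(A)$ with $M\in X_w$. By Proposition~\ref{prop:orbitclosure}(2), the orbits $\Omega_{w_0}$ (with $w_0\in{\sf P}(n)$) partition ${\sf Mat}(n)$, so $M$ lies in a unique such orbit $\Omega_{w_0}$. By Proposition~\ref{prop:orbitclosure}(1), ${\rm rank}(M_{[i],[j]})=r_{w_0}(i,j)$ for all $i,j$. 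The condition $M\in X_A$ then becomes $r_{w_0}(i,j)\leq r_A(i,j)$ for all $i,j$, which by (\ref{eqn:asmposetdef}) is exactly $w_0\geq A$. Now invoke Corollary~\ref{cor:perminfty}(2): ${\tt Perm}(A)={\tt MIN}(\{w\in{\sf P}(n):w\geq A\})$, so there exists $w\in{\tt Perm}(A)$ with $A\leq w\leq w_0$. Then $r_w\geq r_{w_0}$ pointwise, and ${\rm rank}(M_{[i],[j]})=r_{w_0}(i,j)\leq r_w(i,j)$, i.e.\ $M\in X_w$.

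For Part (2), the inclusion $X_A\subseteq\bigcap_i X_{A_i}$ follows directly from $A\geq A_i$ and (\ref{eqn:containasmvar}). For the reverse, suppose $M\in\bigcap_{i=1}^k X_{A_i}$, so ${\rm rank}(M_{[a],[b]})\leq r_{A_i}(a,b)$ for every $i$ and every $(a,b)$. The hypothesis $A=\vee\{A_1,\ldots,A_k\}$ in ${\sf ASM}(n)$ translates, via the order-reversing bijection $A\mapsto r_A$ to ${\sf R}(n)$ combined with Lemma~\ref{lemma:rankbruhat}, into the entrywise identity $r_A(a,b)=\min_i r_{A_i}(a,b)$. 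Therefore
\[
{\rm rank}(M_{[a],[b]})\leq \min_{i}r_{A_i}(a,b)=r_A(a,b),
\]
so $M\in X_A$.

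The main potential obstacle is the $\subseteq$ direction of Part (1): naively one can only produce an \emph{arbitrary} partial permutation $w_0\geq A$ from the orbit of $M$, not one in ${\tt Perm}(A)\subseteq{\sf P}(n)$. Corollary~\ref{cor:perminfty}(2) removes this obstacle by guaranteeing a minimal element of ${\sf P}(n)$ above $A$ below $w_0$, and this in turn rests on Lemma~\ref{lemma:boundinrightplace} (so that the minimizing permutation lives in the right parabolic subgroup, rather than growing to a larger $\mathcal S_N$). Once this reduction is in hand, both parts are just translations between ranks and the ASM order, and the only remaining care needed is to keep straight the order-reversing identification between ${\sf ASM}(n)$ and ${\sf R}(n)$ when computing joins in Part (2).
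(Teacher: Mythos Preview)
Your proof is correct. Part (1) follows the paper's argument exactly: use the ${\sf B}_-\times{\sf B}$ orbit decomposition together with Corollary~\ref{cor:perminfty}(2) to descend from an arbitrary $w_0\in{\sf P}(n)$ above $A$ to some element of ${\tt Perm}(A)$.

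For Part (2) you take a slightly different, more elementary route. The paper proves the reverse inclusion by reusing the orbit decomposition: given $M\in\bigcap_i X_{A_i}$, write $M\in\Omega_w$ for some $w\in{\sf P}(n)$; then $w\geq A_i$ for all $i$ forces $w\geq\vee\{A_i\}=A$, so $M\in\Omega_w\subseteq X_A$. You instead bypass orbits entirely, using Lemma~\ref{lemma:rankbruhat} (and the order-reversing map $A\mapsto r_A$) to identify $r_A=\min_i r_{A_i}$, and then check the defining rank inequalities of $X_A$ pointwise. Your argument is more transparent, needing only the definition of $X_A$ and the lattice structure on ${\sf R}(n)$; the paper's has the virtue of uniformity, treating both parts with the same tool.
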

\begin{proof}
\noindent (1) 
\noindent $(\subseteq)$ Fix $M\in X_A$.  Then
$M\in \Omega_w$ for some $w\in {\sf P}(n)$ and $r_w\leq r_A$.  By Corollary~\ref{cor:perminfty}, there exists $w'\in {\tt Perm}(A)$ so that $w\geq w'$.  So $M\in X_{w}\subseteq X_{w'}$.  Hence $M\in \bigcup_{w\in {\tt Perm}(A)} X_w$.

\noindent $(\supseteq)$ If $w\in {\tt Perm}(A)$ then $w\geq A$.  So by (\ref{eqn:containasmvar}), $X_A\supseteq X_{w}$.  Therefore, \[X_A\supseteq \bigcup_{w\in {\tt Perm}(A)}X_w.\]

\noindent (2)
\noindent $(\subseteq)$ $A\geq A_i$ for all $i$.  So by (\ref{eqn:containasmvar}), $X_A\subseteq \bigcap_{i=1}^k X_{A_i}$.

\noindent $(\supseteq)$  Take $M\in \bigcap_{i=1}^k X_{A_i}$.  Then $M\in \Omega_w$ for some $w\in {\sf P}(n)$.  Since $M\in X_{A_i}$ for all $i$, we have $w\geq A_i$ for all $i$.  So $w\geq A=\vee\{A_1,\ldots, A_k\}$.  Therefore $M\in\Omega_w\subseteq X_A$.
\end{proof}

W.~Fulton showed that each $X_w$ is defined by a smaller set of {\bf essential} conditions,
\begin{equation}
\label{eqn:matrixschubertess}
X_w=\{M\in M_{[i],[j]}: {\rm rank}(M_{[i],[j]})\leq r_w(i,j) \text{ for all } (i,j)\in\mathcal Ess(w)\}.
\end{equation}
By Proposition~\ref{prop:intersectandunion}, $X_A=\bigcap_{u\in {\tt biGr}(A)} X_u$.  Therefore, ASM varieties are also defined by essential conditions.
\begin{equation}
\label{eqn:asmess}
X_A=\{M\in M_{[i],[j]}: {\rm rank}(M_{[i],[j]})\leq r_A(i,j) \text{ for all } (i,j)\in\mathcal Ess(A)\}.
\end{equation}

The rank of \emph{any} submatrix is preserved under the action of ${\sf T}$, so $X_w$ is ${\sf T}$ stable.   By \cite[Theorem~A]{knutson2005grobner}, when $w\in \mathcal S_n$,
\begin{equation}
\label{eqn:matschubmulti}
\mathcal C(X_w;\mathbf x)=\mathfrak S_w.
\end{equation}  

\begin{proposition}
\label{prop:cohosum}
$\displaystyle \mathcal C(X_A;\mathbf x)=\sum_{w\in {\tt MinPerm}(A)} \mathfrak S_w$.
\end{proposition}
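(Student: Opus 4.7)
The plan is to read off the multidegree of $X_A$ directly from the additivity formula (\ref{eqn:multidegreeaddvariety}), once the maximal dimensional irreducible components of $X_A$ have been identified. The input we need is that $X_A$ is cut out by a radical ideal (so multiplicities are all $1$) and that its top-dimensional components are precisely the matrix Schubert varieties $X_w$ for $w \in {\tt MinPerm}(A)$.

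First I would use Proposition~\ref{prop:intersectandunion}(1) to write
\[X_A = \bigcup_{w \in {\tt Perm}(A)} X_w,\]
and Proposition~\ref{prop:orbitclosure}(3) to note that each $X_w$ is irreducible of dimension $n^2-\ell(w)$. To show these are the actual irreducible components (i.e.\ that none is contained in another in the union), I would argue that $X_w \subseteq X_{w'}$ forces $w \geq w'$: by Proposition~\ref{prop:orbitclosure}(1), the dense orbit $\Omega_w \subseteq X_w$ satisfies ${\rm rank}(M_{[i],[j]}) = r_w(i,j)$ for $M \in \Omega_w$, so if $\Omega_w \subseteq X_{w'}$ then $r_w(i,j) \leq r_{w'}(i,j)$ for all $i,j$, i.e.\ $w \geq w'$ in the ASM order. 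Since the elements of ${\tt Perm}(A) = {\tt MIN}(\{w \in \mathcal S_n : w \geq A\})$ form an antichain by definition, the varieties $\{X_w : w \in {\tt Perm}(A)\}$ are pairwise non-nested, so each is an irreducible component of $X_A$.

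Next I would isolate the top-dimensional components. Since $\dim X_w = n^2 - \ell(w)$, the maximal dimensional components are exactly $\{X_w : w \in {\tt MinPerm}(A)\}$, by the definition (\ref{example:permminperm}) of ${\tt MinPerm}(A)$. Because $I_A$ is radical by Proposition~\ref{proposition:maindegeneration}, the multiplicity of $\Bbbk[Z]/I_A$ at each maximal dimensional associated prime is $1$, so the additivity formula (\ref{eqn:multidegreeaddvariety}) specializes to
\[\mathcal C(X_A;\mathbf x) = \sum_{w \in {\tt MinPerm}(A)} \mathcal C(X_w;\mathbf x).\]
Finally, since each $w \in {\tt MinPerm}(A)$ lies in $\mathcal S_n$, the Knutson--Miller identification (\ref{eqn:matschubmulti}) gives $\mathcal C(X_w;\mathbf x) = \mathfrak S_w$, completing the proof.

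The only delicate step is verifying that $X_w \subseteq X_{w'}$ implies $w \geq w'$, which is what is needed to conclude that the decomposition into $X_w$'s is the full irreducible decomposition rather than a redundant cover; everything else is a formal consequence of additivity, radicality, and the known formula for matrix Schubert varieties.
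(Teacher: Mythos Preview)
Your proof is correct and follows essentially the same route as the paper's: identify the top-dimensional irreducible components of $X_A$ via Proposition~\ref{prop:intersectandunion}, then apply additivity and (\ref{eqn:matschubmulti}). You supply more justification than the paper does---in particular, the antichain argument showing that the $X_w$ for $w\in{\tt Perm}(A)$ are pairwise non-nested, and the explicit appeal to radicality for the multiplicity-free form of additivity---whereas the paper simply asserts that the top-dimensional components are $\{X_w:w\in{\tt MinPerm}(A)\}$ as a consequence of Proposition~\ref{prop:intersectandunion}. One minor remark: since $\mathcal C(X_A;\mathbf x)$ is defined via the coordinate ring of the variety $X_A$ (which is automatically reduced), you do not strictly need Proposition~\ref{proposition:maindegeneration} here; the multiplicity-free form (\ref{eqn:multidegreeaddvariety}) already applies.
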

\begin{proof}

As a consequence of Proposition~\ref{prop:intersectandunion}, the top dimensional irreducible components of $X_A$ are $\{X_w:w\in {\tt MinPerm}(A)\}$.  Then using the additivity property of multidegrees and (\ref{eqn:matschubmulti}), we have 
\begin{align*}
\mathcal C(X_A;\mathbf x) =\sum_{w\in{\tt MinPerm}(A)} \mathcal C(X_w;\mathbf x) 
= \sum_{w\in{\tt MinPerm}(A)}  \mathfrak S_w. & \qedhere
\end{align*}
\end{proof}
Theorem~\ref{theorem:main} follows as an immediate consequence of  Proposition~\ref{prop:cohosum} and Theorem~\ref{theorem:schubertsum}.

\subsection{Northwest rank conditions}
It is possible to consider more general rank conditions than those defined by corner sums of ASMs.
Let $\mathbf r=(r_{ij})_{i,j=1}^n$ with $r_{ij}\in \mathbb Z_{\geq 0}\cup \{\infty\}$.  The {\bf northwest rank variety} is 
\begin{equation}
X_{\mathbf r}:=\{M\in M_{[i],[j]}: {\rm rank}(M_{[i],[j]})\leq r_{ij} \text{ for all } 1\leq i,j\leq n\}.
\end{equation}

W.~Fulton showed that $X_{\mathbf r}$ is irreducible if and only if $\mathbf r=r_w$ for some $w\in {\sf P}(n)$.  It is stable under the ${\sf B}_-\times {\sf B}$ orbit, so decomposes as a union of (partial) matrix Schubert varieties.  Z. Xu-an and G. Hongzhu classified northwest rank varieties and  gave an algorithm to decompose them into their  irreducible components \cite{zhao2008irreducible}.  

We give an alternative discussion using the order theoretic properties of partial ASMs.
\emph{A priori}, $X_{\mathbf r}$ appears to be a more general object than an ASM variety.
We will show, up to an affine factor, $X_{\mathbf r}$ is isomorphic to some ASM variety.  Furthermore, $X_{\mathbf r}=X_{r_A}$ for some $A\in {\sf PA}(n)$.  So northwest rank varieties are indexed by partial ASMs.
\begin{lemma}
Let $A\in {\sf PA}(n)$ and $\widetilde{A}\in {\sf ASM}(N)$ its completion to an honest ASM.  Then \[X_A\times \Bbbk^{N^2-n^2}\cong X_{\widetilde{A}}.\]
\end{lemma}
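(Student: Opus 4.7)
My plan is to identify ${\sf Mat}(N)\cong{\sf Mat}(n)\times\Bbbk^{N^2-n^2}$ by splitting off the top-left $n\times n$ block, and then show that under this identification $X_{\widetilde A}$ corresponds to $X_A\times\Bbbk^{N^2-n^2}$. Two ingredients suffice: that the corner sums agree, $r_{\widetilde A}(i,j)=r_A(i,j)$ for all $(i,j)\in[n]\times[n]$, and that the essential set of $\widetilde A$ is contained in $[n]\times[n]$, so that only entries of $M_{[n],[n]}$ appear in the defining rank inequalities of $X_{\widetilde A}$.

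The first ingredient is immediate from Lemma~\ref{lemma:cancompletion}: the canonical completion appends columns strictly to the right of column $n$ and rows strictly below row $n$, none of which contribute to any corner sum $r_{\widetilde A}(i,j)$ with $i,j\le n$. The second, $\mathcal Ess(\widetilde A)\subseteq[n]\times[n]$, is the main content and what I regard as the principal obstacle; I would deduce it from the stronger containment $D(\widetilde A)\subseteq[n]\times[n]$, which the paper invokes tacitly just after Lemma~\ref{lemma:cancompletion}.

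To establish $D(\widetilde A)\subseteq[n]\times[n]$ I use the characterization that $(i,j)\in D(\widetilde A)$ iff $\sum_{k\le i}\widetilde a_{k,j}=0$ and $\sum_{\ell\le j}\widetilde a_{i,\ell}=0$. For $j>n$, column $j$ of $\widetilde A$ is an appended column carrying a single $1$ in some row $r_j\le n$; the column partial-sum condition forces $i<r_j$. The row partial-sum condition in turn forces the $A$-row-sum of row $i$ to be zero (otherwise $\sum_{\ell\le j}\widetilde a_{i,\ell}=1$ already at $\ell=n$), with the column appended for row $i$ at position $n+k_i>j$. Writing $j=n+k$ with $k<k_i$, the top-to-bottom ordering in the completion makes $r_j$ the $k$-th row of $A$ with zero row sum while $i$ is the $k_i$-th such row, so $r_j<i$, contradicting $i<r_j$. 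The case $i>n$ is symmetric, using the left-to-right order in which rows are appended.

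Granted $\mathcal Ess(\widetilde A)\subseteq[n]\times[n]$, the essential-set reduction~(\ref{eqn:asmess}) expresses $X_{\widetilde A}$ as the set of $M\in{\sf Mat}(N)$ with ${\rm rank}(M_{[i],[j]})\le r_A(i,j)$ for all $(i,j)\in\mathcal Ess(\widetilde A)$. For such $(i,j)$ the submatrix $M_{[i],[j]}$ depends only on $M_{[n],[n]}$, so these are precisely the defining conditions for $M_{[n],[n]}\in X_A$; the remaining $N^2-n^2$ entries of $M$ are unconstrained. The splitting map $M\mapsto(M_{[n],[n]},\,M\setminus M_{[n],[n]})$ therefore restricts to the desired isomorphism $X_{\widetilde A}\cong X_A\times\Bbbk^{N^2-n^2}$.
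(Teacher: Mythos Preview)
Your proof is correct, and it takes a genuinely different route from the paper's.

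The paper argues via the $B_-\times B$ orbit decomposition: if $L\in X_A$ lies in the orbit $\Omega_w$ for some $w\in{\sf P}(n)$ with $w\geq A$, and $L'\in{\sf Mat}(N)$ restricts to $L$, then $L'\in\Omega_v$ for some $v\in{\sf P}(N)$ with $v_{[n],[n]}=w$; the order-theoretic minimality of the canonical completion (that $\widetilde w$ is the smallest element of ${\sf ASM}(\infty)$ restricting to $w$) then gives $\widetilde A\leq\widetilde w\leq\widetilde v$, whence $v\geq\widetilde A$ and $L'\in X_{\widetilde A}$. You instead bypass the orbit machinery entirely: you show $D(\widetilde A)\subseteq[n]\times[n]$ by a direct combinatorial analysis of the appended rows and columns, then invoke the essential-set reduction~(\ref{eqn:asmess}) to conclude that the defining inequalities of $X_{\widetilde A}$ constrain only the top-left $n\times n$ block. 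Your approach is more elementary and has the pleasant side effect of actually proving the containment $D(\widetilde A)\subseteq[n]\times[n]$, which the paper asserts without justification just after Lemma~\ref{lemma:cancompletion}. The paper's approach, on the other hand, highlights the role of the completion as an order-theoretic minimum and ties the result to the orbit stratification that is used elsewhere in the section.

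One small remark: in your last paragraph, the phrase ``these are precisely the defining conditions for $M_{[n],[n]}\in X_A$'' is slightly loose, since $X_A$ is defined by \emph{all} northwest rank conditions on $[n]\times[n]$, not just those indexed by $\mathcal Ess(\widetilde A)$. The equivalence is immediate, though: if $M_{[n],[n]}$ satisfies the essential conditions, extend it arbitrarily to $M\in{\sf Mat}(N)$; then $M\in X_{\widetilde A}$ by~(\ref{eqn:asmess}), so $M$ satisfies all rank conditions for $\widetilde A$, in particular those on $[n]\times[n]$, which coincide with those for $A$.
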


\begin{proof}
By construction, $r_{\widetilde A}(i,j)=r_A(i,j)$ for all $1\leq i,j\leq n$.  Therefore, if $M\in X_{\widetilde A}$ then $M_{[n],[n]}\in X_A$.
Conversely, fix $L\in X_A$.  We have $L\in \Omega_w$ for some $w\in {\sf P}(n)$, with $w\geq A$.  Let $L'$ be any matrix in ${\sf Mat}(N)$ so that $L'_{[n],[n]}=L$. Then $L'\in \Omega_v$ for some $v\in {\sf P}(N)$.
Consider the completions $\widetilde{w},\widetilde{v}\in {\sf ASM}(\infty)$. Since $A\leq w$, we have $\widetilde{A}\leq \widetilde{w}\in {\sf ASM}(\infty)$. 

 By construction, $\widetilde{w}$ is the minimum among elements of ${\sf ASM}(\infty)$ which restrict to $w$ in ${\sf P}(n)$.  Since $v_{[n],[n]}=w$, we have  $\widetilde{A}\leq \widetilde{w}\leq \widetilde{v}\in {\sf ASM}(\infty)$.  Then 
$\widetilde{A}\leq \widetilde{v}_{[N],[N]}=v\in {\sf PA}(N)$.
Therefore, $L'\in \Omega_v\subseteq X_{\widetilde{A}}$.
As such, $X_{\widetilde{A}}\cong \{L\times \Bbbk^{N^2-n^2}:L\in X_A\}$.
\end{proof}

Fix a rank function $\mathbf r=(r_{ij})_{i,j=1}^n.$   Let 
\begin{equation}
A_{\mathbf r}=\vee \{[i,j,r_{ij}]_b: r_{ij}<n \}\in {\sf PA}(n).
\end{equation}
\begin{proposition}
$X_{A_{\mathbf r}}=X_{\mathbf r}$.
\end{proposition}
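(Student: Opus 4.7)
The plan is to prove both containments using the dictionary between biGrassmannian permutations and single rank conditions, together with the order theoretic facts already established for $\mathsf{PA}(n)$.

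First I would reduce to the case of individual rank conditions. For each pair $(i,j)$ with $r_{ij} < n$, the biGrassmannian (or identity) $[i,j,r_{ij}]_b$ has the property that its matrix Schubert variety $X_{[i,j,r_{ij}]_b}$ imposes precisely the single constraint $\operatorname{rank}(M_{[i],[j]}) \leq r_{ij}$: this is immediate from Lemma~\ref{lemma:biGrassmannianchar}, which gives $\mathcal{E}ss([i,j,r_{ij}]_b) = \{(i,j)\}$ (when $[i,j,r_{ij}]_b$ is a genuine biGrassmannian), combined with Fulton's essential set description (\ref{eqn:matrixschubertess}). When $r_{ij} \geq \min(i,j)$ the triple encodes the identity and the condition is vacuous. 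When $r_{ij} \geq n$, the constraint $\operatorname{rank}(M_{[i],[j]}) \leq r_{ij}$ is automatic for $n \times n$ matrices, so such pairs may be ignored in both $X_{\mathbf{r}}$ and in the definition of $A_{\mathbf{r}}$.

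For the containment $X_{A_{\mathbf{r}}} \subseteq X_{\mathbf{r}}$, I would argue entry-by-entry: for each $(i,j)$ with $r_{ij} < n$, the join $A_{\mathbf{r}} = \vee\{[i',j',r_{i'j'}]_b : r_{i'j'} < n\}$ dominates $[i,j,r_{ij}]_b$, so by (\ref{eqn:containasmvar}) applied in $\mathsf{PA}(n)$ (or after completing to honest ASMs via Lemma~\ref{lemma:cancompletion} and the fact that the inclusion $\mathsf{PA}(n) \hookrightarrow \mathsf{ASM}(2n)$ is an order embedding) one has $X_{A_{\mathbf{r}}} \subseteq X_{[i,j,r_{ij}]_b}$. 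Taking the intersection over all such $(i,j)$ yields $X_{A_{\mathbf{r}}} \subseteq X_{\mathbf{r}}$.

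For the reverse containment $X_{\mathbf{r}} \subseteq X_{A_{\mathbf{r}}}$, I would use the orbit decomposition of $\mathsf{Mat}(n)$. Take $M \in X_{\mathbf{r}}$; by Proposition~\ref{prop:orbitclosure}(2) applied to $\mathsf{P}(n)$, $M \in \Omega_w$ for a unique partial permutation $w \in \mathsf{P}(n)$, and by Proposition~\ref{prop:orbitclosure}(1) one has $r_w(i,j) = \operatorname{rank}(M_{[i],[j]}) \leq r_{ij}$ for all $(i,j)$. Then for each $(i,j)$ with $r_{ij} < n$, Lemma~\ref{lemma:bigrassranktest} (or rather its partial-ASM analogue from \cite{fortin2008macneille}) gives $[i,j,r_{ij}]_b \leq w$. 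Hence $w$ is an upper bound for the set $\{[i,j,r_{ij}]_b : r_{ij} < n\}$, and therefore $w \geq A_{\mathbf{r}} = \vee\{[i,j,r_{ij}]_b : r_{ij} < n\}$. Applying (\ref{eqn:containasmvar}) (via completion if necessary) gives $M \in \overline{\Omega_w} = X_w \subseteq X_{A_{\mathbf{r}}}$, completing the proof.

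The only mild obstacle is the bookkeeping across the three posets $\mathsf{P}(n)$, $\mathsf{PA}(n)$, and $\mathsf{ASM}(N)$: one must be sure that the rank-based inequalities transfer correctly under completion and that the essential-set characterisation (\ref{eqn:asmess}) of ASM varieties extends to partial ASMs through the isomorphism $X_A \times \Bbbk^{N^2 - n^2} \cong X_{\widetilde{A}}$ just proved. Once this is in hand, the argument is a short piece of lattice theory applied to the biGrassmannian base.
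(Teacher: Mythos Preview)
Your proof is correct and follows essentially the same route as the paper. The only packaging difference is that the paper invokes Proposition~\ref{prop:intersectandunion}(2) up front to write $X_{A_{\mathbf r}}=\bigcap_{r_{ij}<n} X_{[i,j,r_{ij}]_b}$, after which both containments are immediate; your argument for $X_{\mathbf r}\subseteq X_{A_{\mathbf r}}$ via the orbit decomposition and the join property is precisely the content of that proposition's proof, inlined rather than cited.
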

\begin{proof}
If $r_{ij}\geq n$, it is a vacuous rank condition on matrices in ${\sf Mat}(n)$.  So we ignore these entries of $\mathbf r$.  By definition, $X_{A_{\mathbf r}}=\bigcap X_{[i,j,r_{ij}]_b}$ with the intersection taken over $(i,j)$ indexing nonvacuous rank conditions.

If $M\in X_{\mathbf r}$ we have $M\in X_{[i,j,r_{ij}]_b}$ for all $1\leq i,j\leq n$.  So $M\in X_{A_{\mathbf r}}$. 
 Conversely, if $M\in X_{A_{\mathbf r}}$, then ${\rm rank}(M_{[i,j}])\leq r_A(i,j)\leq r_{ij}$ whenever $r_{ij}$ is a nonvacuous rank condition.  So $M\in X_{\mathbf r}$.
\end{proof}

Notice that unions of matrix Schubert varieties need not be northwest rank varieties. 
\begin{example}
Let $X=X_{132}\cup X_{213}$.  If $X=X_{\mathbf r}$, then $r_{132},r_{213}\leq \mathbf r$.  So $r_{132}\wedge r_{213}\leq \mathbf r$.  But $r_{132}\wedge r_{213}=r_{123}$.  Since ${\tt dim}(X_{123})>{\tt dim}(X)$, it follows that $X$  can not be defined by a list of northwest rank conditions. 
 \qed
\end{example}

\subsection{ASM determinantal ideals}
\label{subsection:asmdet}

We now turn our discussion to defining ideals for ASM varieties.   
Define the {\bf ASM ideal} by
\begin{equation}
\label{def:detIdeal}
I_A:=\langle  \text{ minors of size } r_A(i,j)+1 \text{ in } Z_{[i],[j]}\rangle.
\end{equation}
A matrix has rank at most $r$ if and only if all of its minors of size $r+1$ vanish.  As such, $I_A$ set-theoretically cuts out $X_A$. Furthermore, $I_A$ has generators which are homogeneous for the $\mathbb Z^n$ grading on $\Bbbk[Z]$.

\begin{lemma}
\label{lemma:rankIdealContain}
\begin{enumerate}
\item If $r_A\leq r_B$ then $I_A\supseteq I_B$.
\item $\displaystyle I_A=\sum_{u\in {\tt biGr}(A)}I_u=\langle  \text{ minors of size } r_A(i,j)+1 \text{ in } Z_{[i],[j]}:(i,j)\in \mathcal Ess(A)\rangle $.
\end{enumerate}
\end{lemma}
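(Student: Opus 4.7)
For part (1), the key elementary fact is that for any matrix $M$ and any $k \geq r+1$, every $k$-minor of $M$ lies in the ideal generated by the $(r+1)$-minors of $M$; this follows by iterated Laplace expansion of a $k$-minor along a row, reducing size one step at a time. Given $r_A \leq r_B$ entrywise, each defining generator of $I_B$ is an $(r_B(i,j)+1)$-minor of $Z_{[i],[j]}$ whose size is at least $r_A(i,j)+1$, so it lies in the ideal of $(r_A(i,j)+1)$-minors of the same submatrix, and therefore in $I_A$.

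For the first equality in part (2), the inclusion $\supseteq$ is immediate from (1): each $u \in {\tt biGr}(A)$ satisfies $u \leq A$ and hence $r_u \geq r_A$, giving $I_u \subseteq I_A$. For $\subseteq$, I invoke $A = \vee\,{\tt biGr}(A)$ from (\ref{eqn:bigrjoin}). Under the convention $A \leq B \Leftrightarrow r_A \geq r_B$, the ASM-join becomes the coordinatewise minimum of corner sums (via Lemma~\ref{lemma:rankbruhat}), so $r_A(i,j) = \min_{u \in {\tt biGr}(A)} r_u(i,j)$ for every $(i,j)$. Hence for any generator of $I_A$---an $(r_A(i,j)+1)$-minor of $Z_{[i],[j]}$---there exists $u^\ast \in {\tt biGr}(A)$ attaining $r_{u^\ast}(i,j) = r_A(i,j)$, so the minor is a defining generator of $I_{u^\ast}$, placing it in $\sum_u I_u$.

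For the second equality, I combine Proposition~\ref{prop:bigrassList}, which identifies ${\tt biGr}(A) = \{[i,j,r_A(i,j)]_b : (i,j) \in \mathcal Ess(A)\}$, with Fulton's classical theorem \cite{fulton1992flags} that the ideal $I_w$ of a matrix Schubert variety is generated by the minors imposed at the essential boxes of $w$. Applied to a biGrassmannian $[i,j,r]_b$, whose essential set is the single box $(i,j)$ with corner-sum value $r$ (Lemma~\ref{lemma:biGrassmannianchar}), this reads $I_{[i,j,r]_b} = \langle (r+1)\text{-minors in } Z_{[i],[j]}\rangle$; summing over $u \in {\tt biGr}(A)$ then yields the essential-minor description of $I_A$. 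The only subtle point is keeping track of the order reversal between the ASM order and the coordinatewise order on corner sums when translating $A = \vee\,{\tt biGr}(A)$ into the entrywise-minimum statement on $r_A$; appealing to Fulton's essential-set theorem in the biGrassmannian case avoids an otherwise tedious reduction that would mirror the four-case analysis in the proof of Lemma~\ref{lemma:throwawaystuff}.
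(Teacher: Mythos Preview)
Your argument is correct and essentially the same as the paper's for part~(1) and for the first equality in part~(2): both establish $I_A=\sum_{u\in{\tt biGr}(A)}I_u$ via the sandwich $I_A\subseteq\sum_u I_u\subseteq I_A$, using that for each $(i,j)$ some $u\in{\tt biGr}(A)$ attains $r_u(i,j)=r_A(i,j)$ (you derive this from $r_A=\min_u r_u$ via Lemma~\ref{lemma:rankbruhat}, the paper simply asserts it).

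The one substantive difference is the second equality in part~(2). The paper's written proof only justifies $I_A=\sum_u I_u$ and does not separately argue why $\sum_u I_u$ coincides with the essential-minor ideal; implicitly this relies on knowing $I_{[i,j,r]_b}=\langle (r{+}1)\text{-minors in }Z_{[i],[j]}\rangle$. You make this explicit by invoking Fulton's essential-set theorem for Schubert determinantal ideals together with Lemma~\ref{lemma:biGrassmannianchar} and Proposition~\ref{prop:bigrassList}. That is a perfectly legitimate route---Fulton's result is cited and used elsewhere in the paper---and it actually fills a step the paper glosses over. The trade-off is that you import a nontrivial external theorem, whereas one could instead give a direct argument (in the spirit of Lemma~\ref{lemma:throwawaystuff}) showing that each $I_{i,j}^{r_A(i,j)}$ is contained in $I_{i',j'}^{r_A(i',j')}$ for some $(i',j')\in\mathcal Ess(A)$; your approach is cleaner and avoids that case analysis.
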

\begin{proof}
\noindent (1) 
Define 
\begin{equation}
I_{i,j}^r=\langle \text{ minors of size } r+1  \text{ in } Z_{[i],[j]}\rangle.
\end{equation}
We may compute each minor by iteratively doing row expansions.  As such,
\begin{equation}
\text{ if } r\leq r' \text{ then  } I_{i,j}^r\supseteq I_{i,j}^{r'}.  
\end{equation}
So suppose $r_A\leq r_B$.  Then
\[I_A=\sum_j\sum_i I_{i,j}^{r_A(i,j)}\supseteq \sum_j\sum_i I_{i,j}^{r_B(i,j)}=I_B.\]

\noindent (2) For each $(i,j)$ there is $u\in {\tt biGr}(A)$ so that $r_A(i,j)=r_u(i,j)$.  As such, $I_{i,j}^{r_A(i,j)}\subseteq I_u$ for some $u$ in ${\tt biGr}(A)$.  By part (1), $I_u\subseteq I_A$, for all $u\in {\tt biGr}(A)$.  Therefore
\begin{align*}
I_A&=\sum_j\sum_i I_{i,j}^{r_A(i,j)}\subseteq \sum_{u\in {\tt biGr}(A)} I_u \subseteq I_A. \qedhere
\end{align*}
\end{proof}
To distinguish between the two generating sets of $I_A$, we refer to 
\[{\tt Gen}(A)=\{\text{ minors of size } r_A(i,j)+1 \text{ in } Z_{[i],[j]}\}\] 
as the {\bf defining generators} of $I_A$.  Call 
\[{\tt EssGen}(A)=\{\text{ minors of size } r_A(i,j)+1 \text{ in } Z_{[i],[j]}:(i,j)\in\mathcal Ess(A)\}\]  the {\bf essential generators} of $I_A$.

\begin{example}
Let $A$ be as in Example~\ref{example:noneqi}.
We have $\mathcal Ess(A)= \{(1,2),(2,3)\}$.  Furthermore, $r_A(1,2)=0$ and $r_A(2,3)=1$.  Applying Lemma~\ref{lemma:rankIdealContain} yields
\begin{align*}
I_A&=\langle z_{11},z_{12} , 
\left |\begin{array}{cc}
z_{11}&z_{12}\\
z_{21}&z_{22}\\
\end{array} \right |,
\left |\begin{array}{cc}
z_{11}&z_{13}\\
z_{21}&z_{23}\\
\end{array}\right | ,
\left |\begin{array}{cc}
z_{12}&z_{13}\\
z_{22}&z_{23}\\
\end{array}\right |
\rangle\\
&=\langle z_{11},z_{12,},z_{13}z_{21},z_{13}z_{22}\rangle\\
&=\langle z_{11},z_{12},z_{21},z_{22}\rangle\cap \langle z_{11},z_{12},z_{13}\rangle\\
&= I_{3412}\cap I_{4123}.  
\end{align*}
This agrees with the irreducible decomposition $X_A=X_{3412}\cup X_{4123}$.
Notice by additivity, $\mathcal C(\Bbbk[Z]/I_A;\mathbf x)=\mathcal C(\Bbbk[Z]/I_{4123})=x_1^3=\mathfrak S_{4123}$.  
\qed
\end{example}

An {\bf antidiagonal} term order on $\Bbbk [Z]$ is a term order for which the lead term of any minor in $Z$ is the product of its antidiagonal terms.  From now on, fix an antidiagonal term order $\prec$ on $\Bbbk[Z]$.
A {\bf Gr\"obner basis} for  $I$ is a set $\{g_1,\ldots, g_k: g_i\in \Bbbk[Z]\}$ so that 
\begin{enumerate}
\item $I=\langle g_1,\ldots, g_k \rangle$, and
\item ${\tt init}( I)=\langle {\tt init}(g_1),\ldots, {\tt init}(g_k) \rangle.$
\end{enumerate}

\begin{proof}[Proof of Proposition~\ref{proposition:maindegeneration}]
\noindent (1) If $w\in \mathcal S_n$, by Section~7.2 of \cite{knutson2009frobenius}, there is a Frobenius splitting for which $X_w$ is compatibly split.  Since $X_A=\bigcap_{u\in {\tt biGr}(A)} X_u$, it is also compatibly split.  

By the argument in \cite{sturmfels1990grobner}, ${\tt EssGen}(u)$ is a Gr\"obner basis for $I_u$.  Since $\mathcal B_n$ is the base of ${\sf ASM}(n)$, we may apply part (2) of \cite[Theorem~6]{knutson2005grobner}, since \[{\tt EssGen}(A)=\bigcup_{u\in {\tt biGr}(A)}{\tt EssGen}(u),\] it is a Gr\"obner basis for $I_A$. Since ${\tt Gen}(A)\supseteq {\tt EssGen}(A)$, we have that ${\tt Gen}(A)$ is also a Gr\"obner basis for $I_A$.

\noindent (2)  The lead terms of ${\tt EssGen}(A)$ are are square-free, hence ${\tt init}(I_A)$ is radical.  Since $I_A$ degenerates to a radical ideal, it is itself radical.

\noindent (3) 
By \cite[Theorem~B]{knutson2005grobner}, if $w\in S_n$,
\begin{equation}
\label{eqn:complex}
\Delta({\tt init}(I_w))=\Delta(Q_{n\times n},w).
\end{equation}

From part (1), \[{\tt init}(I_A)=\sum_{u\in {\tt biGr}(A)}{\tt init}(I_u).\]  Therefore, 
\begin{align*}
\Delta({\tt init}(I_A))&=\bigcap_{u\in {\tt biGr}(A)}\Delta({\tt init}(I_u)) &\text{(by Lemma~\ref{lemma:SRComplex})}\\
&=\bigcap_{u\in {\tt biGr}(A)} \Delta(Q_{n\times n},u) & \text{(by (\ref{eqn:complex}))}\\
&= \Delta(Q_{n\times n},A) & (\text{by part (2) of Proposition~\ref{prop:interesctunion}}.) &\qedhere
\end{align*}
\end{proof}
The discussion in \cite{knutson2009frobenius} assumes $\Bbbk=\mathbb Q$.  However, since the defining generators of $I_A$ have coefficients in $\{\pm 1\}$, the generators are actually Gr\"obner over $\mathbb Z$, and so the statement holds more generally.
Applying Lemma~\ref{lemma:summaxfacet}, we can also compute $\mathcal C(X_A;\mathbf x)$ as the weighted sum over \[F_{\tt max}(\Delta({\tt init} (I_A)))=F_{\tt max}(\Delta_A).\]   Theorem~\ref{thm:wtbijection} gives a weight preserving bijection between ${\tt Prism}(\boldsymbol \lambda,\mathbf d)$ and $F_{\tt max}(\Delta_{A_{\boldsymbol \lambda,\mathbf d}})$. This produces a specific connection between the Gr\"obner geometry of $X_{A_{\boldsymbol \lambda,\mathbf d}}$ and prism tableaux.

\section*{Acknowledgements}  I thank my advisor, Alexander Yong, for his guidance throughout this project.
I also thank Allen Knutson for suggesting this direction of research and Jessica Striker for helpful conversations about alternating sign matrices. 
  I was supported by  a UIUC Campus Research Board and by an NSF Grant.  This work was partially completed while participating in the trimester ``Combinatorics and Interactions'' at the Institut Henri Poincar\'e.  My travel support  was provided by NSF Conference Grant 1643027.  I was funded by the Ruth~V.~Shaff and Genevie~I.~Andrews Fellowship.   I used Sage and Macaulay2 during the course of my research.

\bibliographystyle{mybst}
\bibliography{mylib}
\end{document}